\definecolor{seagreen}{RGB}{46,139,87}
\definecolor{maroon}{RGB}{128,0,0}
\definecolor{darkviolet}{RGB}{148,0,211}
\definecolor{twelve}{RGB}{100,100,170}
\definecolor{thirteen}{RGB}{100,150,50}
\definecolor{fourteen}{RGB}{200,0,0}
\definecolor{fifteen}{RGB}{0,200,0}
\definecolor{sixteen}{RGB}{0,0,200}
\definecolor{seventeen}{RGB}{200,0,200}
\definecolor{eighteen}{RGB}{0,200,200}
\newcommand{\mmod}{\! \sslash \!}
\newcommand{\mr}[1]{\mathrm{#1}}
\newcommand{\br}[1]{\overline{#1}}
\newcommand{\N}{\mathbb{N}}
\newcommand{\Z}{\mathbb{Z}}
\newcommand{\Q}{\mathbb{Q}}
\newcommand{\F}{\mathbb{F}}
\newcommand{\PP}{\mathbb{P}}
\newcommand{\TT}{\mathbb{T}}
\newcommand{\Tmf}{\mathrm{Tmf}}
\newcommand{\tmf}{\mathrm{tmf}}
\def \HF2{\mr{H}\F_2}
\DeclareMathOperator{\Hom}{Hom}
\DeclareMathOperator{\Ext}{Ext}
\DeclareMathOperator{\Map}{Map}
\DeclareMathOperator{\Maps}{Maps}
\DeclareMathOperator*{\holim}{holim}
\DeclareMathOperator*{\Tot}{Tot}
\DeclareMathOperator{\Spf}{Spf}
\def \AA0{\br{A \mmod A(0)}_*}
\def \AA2{A\mmod A(2)_*}
\def \AE2{(A\mmod E(2))_*}
\renewcommand{\AE}[1]{(A\mmod E(#1))_*}
\def \E2E1{(E(2)\mmod E(1))_*}
\newcommand{\Alg}{\mathsf{Alg}}
\newcommand{\CAlg}{\mathsf{CAlg}}
\newcommand{\Sp}{\mathsf{Sp}}
\newcommand{\Comod}{\mathsf{Comod}}
\newcommand{\Mord}{\mathscr{M}_{\mathrm{ell}}^{ord}}
\newcommand{\coker}{\mathrm{coker}}
 \newtheorem{thm}[equation]{Theorem}
 \newtheorem{cor}[equation]{Corollary}
 \newtheorem{lem}[equation]{Lemma}
 \newtheorem{prop}[equation]{Proposition}
 \newtheorem*{thm*}{Theorem}
 \newtheorem*{cor*}{Corollary}
 \newtheorem*{lem*}{Lemma}
 \newtheorem*{prop*}{Proposition}
  \newtheorem*{not*}{Notation}
  \newtheorem{thmalph}{Theorem}
 \theoremstyle{definition}
 \newtheorem{defn}[equation]{Definition}
 \newtheorem{ex}[equation]{Example}
 \newtheorem{rmk}[equation]{Remark}
\newtheorem*{defn*}{Definition}
\newtheorem*{ex*}{Example}
\newtheorem*{exs*}{Examples}
\newtheorem*{rmk*}{Remark}
\newtheorem*{claim*}{Claim}
\numberwithin{equation}{section}
\numberwithin{figure}{section}
\newcommand{\Cts}{\Maps_{\mathrm{cts}}}
\DeclareMathOperator{\Tor}{Tor}
\newcommand{\Witt}{\mathbb{W}}
\newcommand{\Mod}{\mathsf{Mod}}
\newcommand{\Mtwo}{\mathcal{M}_{\mathrm{pair}}}
\newcommand{\Rtwo}{R_{\mathrm{pair}}}
\newcommand{\pushoutcorner}[1][dr]{\save*!/#1+1.2pc/#1:(1,-1)@^{|-}\restore}
\newcommand{\surj}{\twoheadrightarrow}
\newcommand{\sma}{\wedge}
\newcommand{\ol}[1]{\overline{#1}}
\newcommand{\wh}[1]{\widehat{#1}}
\title{On the \texorpdfstring{$K(1)$}{K(1)}-local homotopy of \texorpdfstring{$\tmf\wedge \tmf$}{tmf smash tmf}}
\author{Dominic Leon Culver}\address{University of Illinois, Urbana-Champaign}\email{dculver@illinois.edu}
\author{Paul VanKoughnett}
\thanks{The second author was supported by the National Science Foundation Grant No. 1440140, while he was in residence at the Mathematical Sciences Research Institute in Berkeley, California, during the spring semester of 2019, as well as under Grant No. 1714273.}
\address{Purdue University}
\email{pvankoug@purdue.edu}
\begin{document}

\begin{abstract}
	As a step towards understanding the $\tmf$-based Adams spectral sequence, we compute the $K(1)$-local homotopy of $\tmf \sma \tmf$, using a small presentation of $L_{K(1)}\tmf$ due to Hopkins. We also describe the $K(1)$-local $\tmf$-based Adams spectral sequence.
\end{abstract}

\maketitle

\tableofcontents

\section{Introduction}

This paper calculates the $K(1)$-local homotopy of $\tmf\wedge \tmf$. The motivation behind this traces back to Mahowald's work on $bo$-resolutions. In his seminal papers on the subject (\cite{mahowald1981}, \cite{LellmanMahowald}), Mahowald was able to use the $bo$-based Adams spectral sequence
\begin{enumerate}
	\item to prove the height 1 telescope conjecture at the prime $p=2$,
	\item and, with Wolfgang Lellmann, to exhibit the $bo$-based Adams spectral sequence as a viable tool for computations.
\end{enumerate}
An initial difficulty with this spectral sequence is the fact that $bo_*bo$ does not satisfy Adams' flatness assumption, resulting in the $E_2$-term not having a description in terms of $\Ext$. One can still work with the spectral sequence, but one has to understand both the algebra $bo_*bo$ and the homotopy theory of $bo$-modules extremely well, and Mahowald's breakthrough decomposition of $bo \sma bo$ in terms of Brown-Gitler spectra satisfied both goals.

Mahowald later initiated the study of resolutions over $\tmf$, first known as $eo_2$. Early work on this was done by Mahowald and Rezk in \cite{MahowaldRezk}, and then developed further in the work of Behrens-Ormsby-Stapleton-Stojanoska in \cite{BOSS}. Again, to work with the $\tmf$-based Adams spectral sequence, one first needs to understand of the  homotopy groups $\pi_*(\tmf\wedge \tmf)$. This computation was seriously studied in \cite{BOSS} at the prime 2, and at the prime 3 is ongoing work of the first author and Vesna Stojanoska. 

Behrens-Ormsby-Stapleton-Stojanoska take a number of approaches to $\tmf_*\tmf$:
\begin{enumerate}
	\item The \textit{rational homotopy} $\tmf_*\tmf \otimes \Q$, can be described as a ring of rational, 2-variable modular forms.
	\item The \textit{$K(2)$-local homotopy} $\pi_*L_{K(2)}(\tmf \sma \tmf)$ can be described in terms of Morava $E$-theory using the methods of \cite{GHMR}. To be precise, one has
	\[
L_{K(2)}(\tmf\wedge \tmf)\simeq \left(\Map^c(\mathbb{S}_2/G_{24}, \overline{E_2})^{hG_{24}}\right)^{hGal}.
\]
	\item Using a change of rings isomorphism, one can write the \textit{classical Adams spectral sequence} as
\[
E_2 = \Ext_{A_*}(H_*\tmf\wedge \tmf) \cong \Ext_{A(2)_*}(A\mmod A(2)_*) \implies \pi_*\tmf\wedge \tmf.
\]
However, the $E_2$-term is rather difficult to calculate since the algebra $A\mmod A(2)_*$ is very complicated. Indeed, a full computation of the Adams $E_2$-term has yet to be done. The approach via the Adams spectral sequence is further complicated by the presence of differentials. Such differentials were first discovered in \cite{MahowaldRezk}, and even more were found in \cite{BOSS}. 
\end{enumerate}

Chromatic homotopy theory in principle allows the reassembly of $\tmf \sma \tmf$ from its rationalization, $K(1)$-localizations at all primes, and $K(2)$-localizations at all primes. In this paper, we approach the as-yet-unstudied chromatic layer, giving a complete description of $L_{K(1)}(\tmf \sma \tmf)$. Our main tool is a construction due to Hopkins of $K(1)$-local $\tmf$ as a small cell complex in $K(1)$-local $E_\infty$-rings \cite{K1localrings}.

Let us briefly mention some intuition and notation before stating the main result. First, the ring $\pi_*L_{K(1)}\tmf$ is essentially a graded version of the ring of functions on the $p$-complete moduli stack $\Mord$ of ordinary, generalized elliptic curves \cite{Laures2004}. At small primes $p \le 5$, we have
\[	\pi_0L_{K(1)}\tmf = \Z_p[j^{-1}]^\wedge_p,	\]
where $j^{-1}$ is the inverse of the modular $j$-invariant. (Note that, at these primes, $\Mord$ includes the point $j = \infty$, corresponding to the nodal cubic, but not the point $j = 0$, which is supersingular for $p \le 5$.) If one writes $KO$ for 2-complete real $K$-theory if $p = 2$, or the $p$-complete Adams summand for $p > 2$, the formula in all degrees (still for $p \le 5$) becomes
\[	\pi_*L_{K(1)}\tmf = (KO_*[j^{-1}])^\wedge_p.	\]
This has $p$-torsion just at $p = 2$.

Second, the 0th homotopy group of a $K(1)$-local $E_\infty$-ring is naturally a $\theta$-algebra, bearing an algebraic structure studied extensively by Bousfield \cite{Bousfield96} and described briefly in our \Cref{subsec: theta-algebras}. We write $\TT(x)$ for the free $\theta$-algebra on a generator $x$; by a theorem of Bousfield, as a ring, $\TT(x)$ is polynomial on $x$, $\theta(x)$, $\theta^2(x)$, and so on.

We can now state the main result.

\begin{thmalph}\label{thm: mainthm}
At primes $p \le 5$,
	\[
	\pi_*L_{K(1)}(\tmf\wedge \tmf)\cong \left(KO_*[j^{-1}, \ol{j^{-1}}] \otimes \TT(\lambda)/(\psi^p(\lambda) - \lambda - j^{-1} + \ol{j^{-1}})\right)^\wedge_p.
	\]
\end{thmalph}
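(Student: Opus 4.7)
The plan is to leverage Hopkins' presentation of $L_{K(1)}\tmf$ as a small $K(1)$-local $E_\infty$-cell complex, base-change it along a second copy of $L_{K(1)}\tmf$, and extract $\pi_*$ via the theory of $\theta$-algebras.

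Concretely, Hopkins realizes $L_{K(1)}\tmf$ as a pushout of $K(1)$-local $E_\infty$-$KO$-algebras of the schematic form
\[
L_{K(1)}\tmf \simeq L_{K(1)}KO \wedge_{L_{K(1)}KO\{\mu\}} L_{K(1)}KO\{\tau\},
\]
where $L_{K(1)}KO\{-\}$ denotes the free $K(1)$-local $E_\infty$-$KO$-algebra on a degree-zero generator, the left leg sends $\mu \mapsto 0$, and the right leg sends $\mu$ to a $\theta$-algebraic expression in $\tau$ encoding the $\psi^p$-law that identifies $j^{-1}$ with $\psi^p(\tau)-\tau$ (up to correction terms). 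The generator $\tau$ is a Teichm\"uller-type lift and the attaching map is chosen so that the pushout's $\pi_0$ is $\Z_p[j^{-1}]^\wedge_p$.

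Smashing this pushout with a second copy of $L_{K(1)}\tmf$ over $L_{K(1)}\MS$, and using that base change preserves colimits of $K(1)$-local $E_\infty$-rings, yields
\[
L_{K(1)}(\tmf \wedge \tmf) \simeq L_{K(1)}\tmf \wedge_{L_{K(1)}\tmf\{\mu\}} L_{K(1)}\tmf\{\tau\}.
\]
By Bousfield's theorem, $\pi_0$ of a free $K(1)$-local $E_\infty$-algebra on a 0-cell is the free $\theta$-algebra over $\pi_0$ of the base; these are $p$-completed polynomial rings, hence flat, so the pushout on $\pi_0$ is the corresponding pushout in the category of $p$-completed $\theta$-algebras, with no higher $\Tor$-corrections. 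Tensoring with $KO_*$ then recovers all of $\pi_*$.

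The main obstacle, and the concluding step, is the identification with the stated closed form. The pushout presents itself in terms of two Teichm\"uller-type generators $\tau, \tau'$---one from each $\tmf$-factor---with Hopkins' $\psi^p$-relation becoming $\psi^p(\tau) - \tau = j^{-1}$ and $\psi^p(\tau') - \tau' = \ol{j^{-1}}$ in the base $L_{K(1)}\tmf$ and in the adjoined $L_{K(1)}\tmf\{\tau\}$ respectively. The change of variables $\tau \mapsto \tau,\ \lambda := \tau - \tau'$ is the key step: the first relation collapses all $\theta$-iterates of $\tau$ into polynomials in $j^{-1}$, while subtracting the two relations yields the clean linear identity $\psi^p(\lambda) - \lambda = j^{-1} - \ol{j^{-1}}$ for the difference generator, which plays the role of $\lambda$ in the theorem. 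Verifying that this reparametrization is an isomorphism of $p$-completed $\theta$-algebras, and that all remaining $\theta$-iterates reduce to polynomials in $\theta^n(\lambda), j^{-1}, \ol{j^{-1}}$, constitutes the bulk of the technical work; $p$-torsion-freeness allows this to be checked modulo $p$ and rationally, where the answer is a transparent polynomial computation.
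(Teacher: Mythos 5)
Your high-level outline matches the paper's strategy---base-change Hopkins' small $E_\infty$-cell presentation of $L_{K(1)}\tmf$ along a second copy of $\tmf$, express the result in $\theta$-algebraic terms, and simplify using the difference generator $\ell = b - \ol{b}$ (your $\tau - \tau'$). But there are two problems.

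First, the presentation you write down is over $KO$, and it isn't available: the paper explicitly notes there is no known $E_\infty$ map $KO \to \tmf$ in the $K(1)$-local category. Hopkins' presentation (\Cref{tmf presentation}) is over the sphere, built in two stages: the $E_\infty$-cone $T_\zeta$ killing $\zeta \in \pi_{-1}L_{K(1)}S$, followed by one further $E_\infty$-cell attachment along $\theta(f) - h(f)$. You never mention $T_\zeta$, yet the intermediate computation of $\pi_*(T_\zeta \wedge T_\zeta)$ carries most of the technical weight.

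Second, and more seriously, you claim that flatness plus ``Bousfield's theorem'' lets you read off $\pi_0$ of the pushout as the pushout of free $\theta$-algebras, with no higher $\Tor$-corrections, and then that ``tensoring with $KO_*$ recovers all of $\pi_*$.'' Neither assertion is justified. McClure's theorem (\Cref{thm: McClure}) describes the $K$-\emph{homology} of a free $K(1)$-local $E_\infty$-algebra, not its homotopy, and passing from $KO_*$-homology to $\pi_*$ requires running and collapsing the $KO$-based Adams spectral sequence. The paper's argument does not go through $\pi_*$ at all: it computes $KO_*(\tmf \wedge \tmf)$ via a Künneth formula that is only valid because $KO_*\PP(S^0) \to KO_*T_\zeta$ is pro-free (\Cref{prop: pro-free tensor}), then invokes the central structural fact that $KO_*T_\zeta$ (hence $KO_*(T_\zeta\wedge T_\zeta)$ and then $KO_*(\tmf \wedge \tmf)$) is an \emph{extended} $KO_*KO$-comodule (\Cref{thm: KO homology of Tzeta}, and \Cref{lem: tensor product extended}), so the Adams $E_2$-page is concentrated on the zero line and $\pi_*$ is literally the module of primitives. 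The proof of that extendedness is the hard part of the paper---a delicate Hopf-algebra splitting argument using the $\lambda$-ring structure, Mahler's theorem, and the section $s:\Cts(\Z_p,\Z_p) \to \TT(b)$. Without it, there is no route from the $\theta$-algebraic pushout to $\pi_*$, and your ``no higher Tor corrections'' is an assertion of exactly the thing that needs proving.
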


Given this, the last remaining obstacle to a chromatic understanding of $\tmf_*\tmf$ is a calculation of the transchromatic map 
\[	L_{K(1)}(\tmf \sma \tmf) \to L_{K(1)}L_{K(2)}(\tmf\wedge \tmf).	\]
We hope to study this in future work.

Let us describe a few consequences of this result. One is a computation of the $K(1)$-local Adams spectral sequence based on $\tmf$.

\begin{thmalph}\label{thm: ASS}
For any spectrum $X$, there is a conditionally convergent spectral sequence
\[	E_2 = \Ext_{\pi_*L_{K(1)}(\tmf\wedge \tmf)}(\pi_*L_{K(1)}\tmf, \pi_*L_{K(1)}(\tmf \sma X)) \Rightarrow \pi_*L_{K(1)}X.	\]
When $X$ is the sphere, the $E_2$ page of this spectral sequence is isomorphic to
\[\begin{split}
	\Ext_{\pi_*L_{K(1)}(\tmf\wedge \tmf)}(\pi_*L_{K(1)}\tmf, \pi_*L_{K(1)}\tmf) \cong \Ext_{\pi_*L_{K(1)}(KO \sma KO)}(KO_*, KO_*) \\
	\cong H^*_{cts}(\Z_p^\times/\mu, KO_*),
\end{split}\]
where $\mu$ is the maximal finite subgroup of $\Z_p^\times$.
\end{thmalph}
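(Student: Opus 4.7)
My proof plan proceeds in three stages. For the first assertion, I would build the $\tmf$-Adams tower $K(1)$-locally in the standard way. Starting from the unit map $S \to L_{K(1)}\tmf$ and smashing with $X$, one obtains the augmented cosimplicial object
\[
L_{K(1)}\tmf \sma X \;\to\; L_{K(1)}(\tmf \sma \tmf) \sma X \;\rightrightarrows\; L_{K(1)}(\tmf^{\sma 3}) \sma X \;\cdots
\]
whose totalization recovers $L_{K(1)}X$, and whose skeletal filtration yields the desired conditionally convergent spectral sequence. To present the $E_2$-page as $\Ext$, one needs flatness of $\pi_*L_{K(1)}(\tmf \sma \tmf)$ over $\pi_*L_{K(1)}\tmf$: this is provided by Theorem A, since eliminating $\ol{j^{-1}}$ via the relation $\psi^p(\lambda) - \lambda = j^{-1} - \ol{j^{-1}}$ presents $\pi_*L_{K(1)}(\tmf \sma \tmf)$ as the completed polynomial extension $\pi_*L_{K(1)}\tmf \,\hat{\otimes}\, \TT(\lambda)$.

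For the second assertion, with $X = S$, the plan is to compare the cobar complex of the Hopf algebroid $(\pi_*L_{K(1)}\tmf,\, \pi_*L_{K(1)}(\tmf \sma \tmf))$ to that of $(KO_*,\, \pi_*L_{K(1)}(KO \sma KO))$ via the Hopf-algebroid map induced by the unit $KO \to L_{K(1)}\tmf$. Hopkins' presentation realizes $L_{K(1)}\tmf$ from $KO[j^{-1}]^\wedge_p$ by attaching a single $E_\infty$-cell enforcing the Adams-operation relation $\psi^p = 1 + j^{-1}$. Consequently each $\pi_*L_{K(1)}\tmf^{\sma n}$ is obtained from $\pi_*L_{K(1)}(KO^{\sma n})$ by freely adjoining the $j^{-1}$-variables together with $\theta$-algebra generators $\lambda_{i,i+1}$, modulo one Hopkins-cell relation per simplicial face. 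Filtering by total $\lambda$-polynomial degree, the associated graded becomes a Koszul complex for this regular sequence of relations, contributing only in degree zero. This exhibits the comparison map as a quasi-isomorphism of cobar complexes, giving the first $\Ext$ isomorphism.

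The final identification $\Ext_{\pi_*L_{K(1)}(KO \sma KO)}(KO_*, KO_*) \cong H^*_{cts}(\Z_p^\times/\mu, KO_*)$ is the classical description of the $E_2$-page of the $K(1)$-local $KO$-based Adams spectral sequence: $\pi_*L_{K(1)}(KO \sma KO)$ is the continuous function ring $\Cts(\Z_p^\times/\mu, KO_*)$ under the Adams-operation action of the (pro-finite) quotient $\Z_p^\times/\mu$, and the cobar complex of this Hopf algebroid is precisely the standard complex computing continuous group cohomology.

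The main obstacle is the Koszul/filtration argument in the middle paragraph: rigorously verifying that the Hopkins-cell relations form a regular sequence at each cosimplicial level, so that the added $\lambda$-generators are cobar-acyclic. While this is morally forced by the descent-theoretic interpretation of the cobar complex — the extra data encodes only the Hopkins cells and not genuine new cohomology — executing it requires either a careful spectral-sequence argument filtering by total polynomial degree, or an explicit construction of augmentation-preserving contracting homotopies on the $\TT(\lambda)$-factors at every simplicial level.
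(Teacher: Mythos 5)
Your outline shares the broad architecture of the paper's proof, but there are two genuine gaps and one place where you take a different (and harder) route than the paper.

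\textbf{Convergence is asserted, not proved.} You write that the totalization of the cosimplicial object recovers $L_{K(1)}X$, but this is exactly the content of conditional convergence, and it does not come for free. The Bousfield--Kan spectral sequence always converges conditionally to the homotopy of the totalization; what must be shown is that the map $X \to \Tot\bigl(L_{K(1)}(\tmf^{\sma \bullet+1}\sma X)\bigr)$ is an equivalence. The paper handles this by showing that every $K(1)$-local spectrum is $K(1)$-local $\tmf$-nilpotent in the sense of Devinatz--Hopkins, via the fiber sequences $\tmf \xrightarrow{j^{-1}} \tmf \to KO$ and $S \to KO \xrightarrow{\psi^g-1} KO$. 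You need an argument of this kind; without it, the ``conditionally convergent'' claim is unjustified.

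\textbf{Your Koszul argument is a different and incomplete route for the change of rings.} Where you propose filtering the cosimplicial cobar complex by $\lambda$-degree and invoking a regular-sequence/Koszul collapse, the paper instead proves an $L$-complete analogue of the Hovey--Sadofsky change-of-rings theorem (its Proposition on change of rings), reducing the whole comparison $\Ext_{\tmf_*\tmf}(\tmf_*,\tmf_*) \cong \Ext_{KO_*KO}(KO_*,KO_*)$ to two checkable conditions: (i) the map $KO_*\otimes_{\tmf_*}\tmf_*\tmf\otimes_{\tmf_*}KO_* \to KO_*KO$ is an isomorphism, verified by an explicit computation tracking the generator $\ell = b - \ol b$, and (ii) $\tmf_*KO$ is pro-free over $\tmf_*$, verified by a pushout square and the pro-freeness lemmas of Section 2. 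You have correctly identified that your regular-sequence claim is the crux, and indeed it is not clear that it can be run as stated: the relations you need to kill live at every cosimplicial level and are not obviously a regular sequence compatible with the cobar differential, and the $\theta$-algebra structure means the ``total $\lambda$-degree'' filtration is not preserved by $\psi^p$. The paper's change-of-rings route avoids all this by working at the level of the Hopf algebroid once and for all rather than level-by-level in the cobar complex. I recommend abandoning the Koszul filtration in favor of the Hovey--Sadofsky-style argument.

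A minor point: your flatness argument (eliminate $\ol{j^{-1}}$ from the relation in Theorem~A to present $\tmf_*\tmf$ as $\tmf_*\otimes\TT(\lambda)$) is valid and is a legitimate alternative to the paper's argument, which instead invokes Laures's splitting $L_{K(1)}\tmf \simeq \bigvee KO$ to reduce pro-freeness of $\tmf_*\tmf$ over $\tmf_*$ to pro-freeness of $KO_*KO$ over $KO_*$. Either works. Also, your description of Hopkins' presentation as attaching a single $E_\infty$-cell to $KO[j^{-1}]^\wedge_p$ enforcing $\psi^p=1+j^{-1}$ does not match the paper's: the actual pushout is $\tmf \simeq S^0 \sma_{\PP(S^0)} T_\zeta$, with the cell attaching along $\theta(f) - h(f)$, and $KO$ only appears as a \emph{quotient} of $\tmf$, not as a subring. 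Be careful that the intermediate objects $\tmf^{\sma n}$ in your cobar complex are built from iterated $T_\zeta$-smashes, not from $KO$-algebras.
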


In particular, the spectral sequence for the sphere vanishes at $E_2$ above cohomological degree 1, and so collapses immediately. While the $K(1)$-local $\tmf$-based Adams spectral sequence is thus uninteresting, one obtains some nontrivial information about the global $\tmf$-based Adams spectral sequence, namely that its $v_1$-periodic classes occur only on the 0 and 1 lines.

To put these results into perspective, it helps to return to $bo$. $K(1)$-locally, $bo$ is the same as $KO$, and its $K(1)$-local co-operations algebra is simply:
\[	\pi_*L_{K(1)}(bo \sma bo) = \pi_*L_{K(1)}(KO \sma KO) = KO_* \otimes \Cts(\Z_p^\times/\mu, \Z_p).	\]
As $bo \sma bo$ is $E_\infty$, this ring has an alternative $\theta$-algebraic description, namely
\[	\pi_*L_{K(1)}(bo \sma bo) = KO_* \otimes \TT(b)/(\psi^p(b) - b).	\]
Here $b$ is an explicit choice of group isomorphism $\Z_p^\times/\mu \stackrel{\cong}{\to} \Z_p$, and the single relation expands to
\[	p\theta(b) = b - b^p,	\]
a relation between $b$ and $\theta(b)$. In the formula of \Cref{thm: mainthm}, the modular forms $j^{-1}$, $\ol{j^{-1}}$ also satisfy $\theta$-algebra relations forced on them by number theory, and one obtains a relation between $\lambda$, $\theta(\lambda)$, and $\theta^2(\lambda)$, a sort of second-order version of the $bo$ calculation.

It is also worth noting that, for the sake of calculating Adams spectral sequences, one is interested in the coalgebra of $bo_*bo$ as much as its algebra -- and the original, non-$\theta$-algebraic calculation
\[	\pi_*L_{K(1)}(bo \sma bo) = KO_* \otimes \Cts(\Z_p^\times/\mu, \Z_p)	\]
is actually better suited for this purpose. It is this realization, and a search for an analogue for $\tmf$, that eventually led to the proof of \Cref{thm: ASS}.

As a final remark, our calculation also doubles as a calculation of a purely number-theoretic object. Namely, consider the moduli problem $\Mtwo$ over $\Spf \Z_p$ that sends a $p$-complete ring $R$ to the groupoid of data 
\[	(E, E', \phi:E \stackrel{\sim}{\to} E'),	\]
where $E$ and $E'$ are ordinary generalized elliptic curves over $R$ and $\phi$ is an isomorphism of their formal groups. Just as the structure sheaf of the moduli of generalized elliptic curves extends to a locally even periodic sheaf of $E_\infty$ ring spectra whose global sections are (the nonconnective) $\Tmf$ \cite{Behrensconstruction, GoerssTMF}, there is such a sheaf on $\Mtwo$ whose global sections are $L_{K(1)}(\tmf \sma \tmf)$. Moreover, $\Mtwo$ is an affine scheme in the case $p>2$, and has a double cover by an affine scheme in the case $p = 2$. In both cases, its ring of global functions $\Rtwo$ is exactly $\pi_0L_{K(1)}(\tmf \sma \tmf)$. We can think of this ring as a ring of ``ordinary 2-variable $p$-adic modular functions''. As examples of ordinary 2-variable $p$-adic modular functions, we have the functions
\[	j^{-1}: (E, E', \phi) \mapsto j^{-1}(E), \quad \overline{j^{-1}}: (E, E', \phi) \mapsto j^{-1}(E').	\]
Of course, these examples are somewhat trivial because they are really 1-variable modular functions. The results of this paper tell us that, \textit{as a $\theta$-algebra}, $\Rtwo$ is generated over these 1-variable functions by a single other generator. This generator is explicitly given as the generator $\lambda$ described in \Cref{rmk: generator lambda}.
%
%In fact, the $\theta$-algebra structure on $\pi_0L_{K(1)}(\tmf \sma \tmf)$ has an equivalent definition in number theory in terms of Atkin operators, and the generators we give can be identified in terms of modular forms. While the following is essentially a restatement of the original calculation, it is of independent enough interest to deserve explicit mention:
%
%\begin{thmalph}\label{thm: mf}
%The ring of ordinary 2-variable $p$-adic modular forms is generated as a $\theta$-algebra by $j^{-1}$, $\ol{j^{-1}}$, and $c_4/\ol{c_4}$ at $p = 2$, respectively $c_6/\ol{c_6}$ at $p = 3$.
%\end{thmalph}

\subsection{Outline of the paper}

This paper is almost entirely set inside the $K(1)$-local category. This leads to some unusual choices about notation, for the sake of which we encourage even the expert reader to take a look at \Cref{sec: notation} below. In \Cref{sec: completehopfalgebroids}, we give some background information about $K(1)$-local homotopy theory, in particular reviewing the relevant notion of completeness and associated issues of homological algebra. Building on \cite{HoveyStrickland}, \cite{HoveySS}, \cite{Baker}, and \cite{BarthelHeard}, we set up some fundamental tools, such as a relative K\"unneth formula, a change of rings theorem, and the theory of $K(1)$-local Adams spectral sequences, that we will use later on.

In \Cref{sec: Cone zeta}, we study the $E_\infty$ cone on the class $\zeta \in \pi_{-1}L_{K(1)}S$, called $T_\zeta$ by Hopkins. This object was used in \cite{K1localrings} and \cite{Laures2004} as a partial version of $\tmf$, and the results in this section can mostly be found in those papers. However, in the process of reading those papers, the authors found some problems with the calculation of $\pi_*T_\zeta$ (see \Cref{rmk: Hopkins mistake}). Part of our motivation in writing down this calculation in detail is to fill these gaps.

In Section \ref{sec: coops for cone on zeta}, we compute the cooperations algebra $\pi_*L_{K(1)}(T_\zeta \sma T_\zeta)$, which is an approximation to $\pi_*L_{K(1)}(\tmf \sma \tmf)$.

In \Cref{sec: tmf}, we return to the work of Hopkins and Laures to review their construction of $L_{K(1)}\tmf$. Again, the material in this section can be found in \cite{K1localrings} or \cite{Laures2004}, but we include for the reader's convenience.

In \Cref{sec: coops for tmf}, we compute the $K(1)$-local co-operations algebra for $\tmf$, and prove Theorems \ref{thm: mainthm} and \ref{thm: ASS}.

%In \Cref{sec: numbertheory}, we discuss the relationship between our results and the theory of $p$-adic modular forms, and prove Theorem \ref{thm: mf}.

We have also included an appendix containing technical information about $\theta$-algebras and $\lambda$-rings.

\subsection{Notation and conventions}\label{sec: notation}

\textit{The rest of this paper takes place inside the $K(1)$-local category, at a fixed prime $p \le 5$. To avoid notational clutter, we adopt a blanket convention that all objects are implicitly $K(1)$-localized and/or $p$-completed, unless it is explicitly stated otherwise.} To be precise, this includes the following conventions for algebra:
\begin{itemize}
	\item All rings are implicitly $L$-completed with respect to the prime $p$ (see \Cref{subsec: completeness}, and note that the $L$-completion agrees with the ordinary $p$-completion when the ring is torsion-free). For example, by $\Z_p[j^{-1}]$ we really mean the completed polynomial algebra
	\[	\Z_p[j^{-1}]^{\wedge}_p = \left\{ \sum_{n \ge 0} a_n j^{-n}: |a_n|_p \to 0 \text{ as }n \to \infty\right\}.	\]
	\item By $\otimes$ we mean the $L$-completed tensor product (see \Cref{subsec: completeness}).
	\item If $R_*$ is an $L$-complete ring, then $\Mod^\wedge_{R_*}$ is the category of $L$-complete $R_*$-modules and $\CAlg^\wedge_{R_*}$ the category of $L$-complete commutative $R_*$-algebras. If $(R_*, \Gamma_*)$ is an $L$-complete Hopf algebroid, then $\Comod^\wedge_{\Gamma_*}$ is its category of $L$-complete comodules (see \Cref{subsec: Lcomplete Hopf algebroids}).
	\item $\Ext_{\Gamma_*}$ is the relative Ext functor for comodules defined in \Cref{defn: relative ext}.
	\item $\TT(x_1, \dotsc, x_n)$ is the free $p$-complete $\theta$-algebra on the generators $x_1, \dotsc, x_n$ (see \Cref{thm: free theta-algebra}).
\end{itemize}
It includes the following conventions for topology:
\begin{itemize}
	\item All smash products are implicitly $K(1)$-localized.
	\item $\Sp$ is the category of $K(1)$-local spectra, and $\CAlg$ is the category of $K(1)$-local $E_\infty$-algebras.
	\item $\PP(X)$ is the free $K(1)$-local $E_\infty$-algebra on a spectrum $X$.
\end{itemize}

We will also employ the following notation:
\begin{itemize}
	\item $\mu$ is the maximal finite subgroup of $\Z_p^\times$, so $\mu \cong C_2$ if $p = 2$ or $C_{p-1}$ if $p$ is odd, and $\Z_p^\times/\mu \cong \Z_p$.
	\item $\omega$ is a generator of $\mu$ (so $\omega = -1$ at $p=2$).
	\item $g$ is a fixed element of $\Z_p^\times$ mapping to a topological generator of $\Z_p^\times/\mu$. When $p > 2$, we take $g$ to itself be a topological generator of $\Z_p^\times$.
	\item $K$ is $p$-completed complex $K$-theory, and $\tmf$ is $K(1)$-local $\tmf$. $KO$ is (2-complete) $KO$ if $p=2$, or the ($p$-complete) Adams summand if $p$ is odd.
\end{itemize}

\begin{rmk}[Restrictions on $p$]
Unless otherwise stated, the results of this paper are valid only at $p = 2$, 3, and 5. This is primarily a matter of convenience: at these primes, there is a unique supersingular $j$-invariant congruent to 0 mod $p$, which implies that $\pi_0L_{K(1)}\tmf$ is a $p$-complete polynomial in the generator $j^{-1}$. At larger primes, $\pi_0L_{K(1)}\tmf$ is the $p$-complete ring of functions on
\[	\mathbb{P}^1_{\Z_p} - \{\text{supersingular }j\text{-invariants}\},	\]
which grows more complicated as the number of supersingular $j$-invariants increases, though presumably not in an essential way.

Our restriction on $p$ is also a matter of interest: it is only at $p = 2$ and 3 that the homotopy groups of the unlocalized spectrum $\tmf$ has torsion; at larger primes $\tmf_*$ is just the ring of level 1 modular forms.

The reader will also note that the $K(1)$-local category behaves differently at the prime 2 than at all other primes. For example, while $\pi_*\tmf$ has 2- and 3-torsion, $\pi_*L_{K(1)}\tmf$ only has torsion at the prime 2.
\end{rmk}

\subsection{Acknowledgements}

The authors spoke to many individuals whose input helped in the writing of this paper. We especially thank Tobias Barthel, Mark Behrens, Siegfred Baluyot, Drew Heard, Charles Rezk, Vesna Stojanoska, and Craig Westerland. 
%
%The first author would like to thank Charles Rezk, Vesna Stojanoska, and Mark Behrens for patiently explaining many of the ideas upon which this work relies. He would also like to thank Siegfred Baluyot.  
%
%The second author would like to thank Tobias Barthel, Drew Heard, and Craig Westerland for helpful conversations.

\section{Complete Hopf algebroids and comodules}\label{sec: completehopfalgebroids}

One often attempts to study a $K(1)$-local spectrum $X$ through its completed $K$-homology or $KO$-homology,
\[	K_*X = L_{K(1)}(K \wedge X)\text{ and }KO_*X = L_{K(1)}(KO \wedge X).	\]
These are not just graded abelian groups, but satisfy a condition known since \cite{HoveyStrickland} as $L$-completeness. In \Cref{subsec: completeness}, we review the definition of $L$-completeness and some basic properties of the $L$-complete category. Next, in \Cref{subsec: pro-freeness}, we review the important technical notion of pro-freeness, which is to be the appropriate replacement for flatness in the $L$-complete setting. As we have to deal with some relative tensor products of $K(1)$-local ring spectra, we need a relative definition of pro-freeness that is more general than that used by other authors, e.~g.~\cite{HoveySS}. We use this definition to give a K\"unneth formula for relative tensor products in which one of the modules is pro-free. In \Cref{subsec: Lcomplete Hopf algebroids}, we discuss homological algebra over $L$-complete Hopf algebroids, a concept originally due to Baker \cite{Baker}, and conclude with an examination of the $K(1)$-local Adams spectral sequence. Finally, in \Cref{subsec: Hopf algebroids K KO}, we give the classical examples of the Hopf algebroids for $K$ and $KO$, and describe their categories of comodules.

The results of this section should be compared with Barthel-Heard's work on the $K(n)$-local $E_n$-based Adams spectral sequence \cite{BarthelHeard}. While we ultimately want to write down $K(1)$-local Adams spectral sequences over more general bases than $K$ itself, the work involved is substantially simplified by certain convenient features of height 1, mostly boiling down to the fact that direct sums of $L$-complete $\Z_p$-modules are exact -- the analogue of which is not true at higher heights \cite[Proposition 1.9]{HoveySS}. The reader who wishes to do similar work at higher heights should therefore proceed with caution.

\subsection{Background on \texorpdfstring{$L$-completeness}{L-completeness}}\label{subsec: completeness}

In the category $\Sp$ of $K(1)$-local spectra, there is a well-known equivalence (\cite[Proposition 7.10]{HoveyStrickland}) 
\[
X\simeq \holim_{i} X\wedge S/p^i
\]
Replacing $X$ by the $K(1)$-local smash product $K\wedge X$, we have an equivalence
\[
K\wedge X\simeq \holim K\wedge X\wedge S/p^i.
\]
This shows that $K_*X$ is derived complete, in a sense we now make precise. 

We can regard $p$-completion as an endofunctor of the category of abelian groups. This functor is neither left nor right exact. However, it still has left derived functors, which we write as $L_0$ and $L_1$ (the higher left derived functors vanish in this case). Since $p$-completion is not right exact, it is generally \emph{not} the case that $M^\wedge_p = L_0M$. There is, however, a canonical factorization of the completion map $M\to M^\wedge_p$:
\[
\begin{tikzcd}
	M\arrow[r] & L_0M \arrow[r, "\varepsilon_M"] &  M^\wedge_p. 
\end{tikzcd}
\]
The second map is surjective, and in fact, there is a short exact sequence \cite[Theorem A.2(b)]{HoveyStrickland}
	\begin{equation}\label{eq: L-completion Milnor sequence}
	0 \to \lim_n\!^1\Tor^\Z_1(\Z/p^n, M) \to L_0M \to M^\wedge_p \to 0.
	\end{equation}
We also have \cite[Theorem A.2(d)]{HoveyStrickland}
\[	L_0M = \Ext_{\Z}^1(\Z/p^\infty, M), \quad L_1M = \Hom_{\Z}(\Z/p^\infty, M).	\]
	
\begin{defn}
		An abelian group $A$ is \emph{$L$-complete} if the natural map $A\to L_0A$ is an isomorphism. A graded abelian group $A_*$ is \emph{$L$-complete} if it is $L$-complete in each degree.
\end{defn}

	Being $L$-complete is quite close to being $p$-complete: for example, $p$-complete modules are $L$-complete, and if $M$ is finitely generated, then $L_0M\cong M^\wedge_p$. In particular, $K_*$ and $KO_*$ are $L$-complete. More generally, for any $K(1)$-local spectrum $X$, $\pi_*X$ is $L$-complete as a graded abelian group \cite[Lemma 7.2]{HoveySS}.
	
	Write $\Mod_*^\wedge$ for the category of $L$-complete graded $\Z_p$-modules. This is an abelian subcategory of the category of graded $\Z_p$-modules which is closed under extensions. It is also closed symmetric monoidal \cite[section 1.1]{HoveySS} under the $L$-completed tensor product
	\[	
	M_* \overline{\otimes} N_* = L_0(M_* \otimes N_*).
	\]
	Following our general conventions (see \Cref{sec: notation}), we will simply write $\otimes$ for this tensor product, where this does not cause confusion. 
	
	Write $\CAlg_*^\wedge$ for the category of commutative ring objects in $\Mod_*^\wedge$. If $R_* \in \CAlg_*^\wedge$ (in particular, if $R_* = K_*$ or $KO_*$), there is an obvious abelian category of $L$-complete $R_*$-modules, which we denote $\Mod_{R_*}^\wedge$.
	
\subsection{Pro-freeness}\label{subsec: pro-freeness}
	
	\begin{defn}
	Let $R_* \in \CAlg_*^\wedge$, and let $M_* \in \Mod_{R_*}^\wedge$. Say that $M_*$ is \textbf{pro-free} if it is of the form
	\[	M_* \cong L_0F_*,	\]
	where $F_*$ is a free graded $R_*$-module. Say that a map $R_* \to S_*$ of commutative rings in $\Mod_*^\wedge$ is \textbf{pro-free} if $S_*$ is a pro-free $R_*$-module.
	\end{defn}
	
	Pro-free modules are projective in the category $\Mod_{R_*}^\wedge$. In this height 1 case, they are also flat in this category. As is shown below, this follows from the fact that direct sums in $\Mod_*^\wedge$ are exact, which is, surprisingly, not true at higher heights.
	
	\begin{lem}\label{lem: pro-free flat}
	Let $R_* \in \CAlg_*^\wedge$, and let $M_*$ be a pro-free $R_*$-module. Then $M_*$ is faithfully flat in $\Mod_{R_*}^\wedge$, that is, the functor $M_* \otimes_{R_*} \cdot$ is exact and conservative.
	\end{lem}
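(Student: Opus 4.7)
The plan is to unwind the definition of pro-freeness in order to reduce the exactness of $M_*\otimes_{R_*}(-)$ to the height 1 miracle that $L$-completed direct sums are exact, and then deduce conservativity from the fact that each factor sits as a summand of a direct sum.

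First, I would set up notation: write $F_*=\bigoplus_{i\in I}\Sigma^{n_i}R_*$ so $M_*=L_0F_*$, and identify the completed tensor product with an $L$-completed direct sum. Concretely, for any $A_*\in\Mod^\wedge_{R_*}$, I would check
\[
M_*\otimes_{R_*}A_*\;\cong\;L_0\bigl(F_*\otimes_{R_*}A_*\bigr)\;\cong\;L_0\Bigl(\bigoplus_{i\in I}\Sigma^{n_i}A_*\Bigr),
\]
where the middle tensor product is the uncompleted one. The first isomorphism follows from the definition of the $L$-complete tensor product as $L_0$ of the uncompleted tensor together with the idempotence of $L_0$ on $L$-complete modules (so replacing $F_*$ by $L_0F_*=M_*$ before or after tensoring gives the same answer after applying $L_0$ a second time); the second isomorphism uses that $F_*$ is free, so uncompleted tensor with $F_*$ is just the direct sum functor.

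Second, I would prove flatness. Given a short exact sequence $0\to A_*\to B_*\to C_*\to 0$ in $\Mod^\wedge_{R_*}$, direct sums of abelian groups are exact, so
\[
0\to\bigoplus_{i\in I}\Sigma^{n_i}A_*\to\bigoplus_{i\in I}\Sigma^{n_i}B_*\to\bigoplus_{i\in I}\Sigma^{n_i}C_*\to 0
\]
is exact. Now invoke \cite[Proposition 1.9]{HoveySS}, which says that at height 1 the $L$-completed direct sum functor on $\Mod^\wedge_{R_*}$ is exact. Applying $L_0$ to the displayed sequence and using the identification from the previous paragraph yields the desired short exact sequence
\[
0\to M_*\otimes_{R_*}A_*\to M_*\otimes_{R_*}B_*\to M_*\otimes_{R_*}C_*\to 0.
\]

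Third, I would prove conservativity. Suppose $M_*\otimes_{R_*}N_*=0$, i.e.\ $L_0\bigl(\bigoplus_{i}\Sigma^{n_i}N_*\bigr)=0$. Pick any index $i_0\in I$; inclusion and projection exhibit $\Sigma^{n_{i_0}}N_*$ as a retract of $\bigoplus_i\Sigma^{n_i}N_*$ in $\Mod_{R_*}$. Applying the functor $L_0$ preserves retracts, and since $\Sigma^{n_{i_0}}N_*$ is already $L$-complete we have $L_0(\Sigma^{n_{i_0}}N_*)=\Sigma^{n_{i_0}}N_*$. Hence $\Sigma^{n_{i_0}}N_*$ is a retract of $0$, and therefore $N_*=0$.

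The main obstacle is the bookkeeping of step one: one has to be careful that ``$M_*\otimes_{R_*}A_*$'' (completed) really agrees with $L_0$ of the naive direct sum $\bigoplus_i\Sigma^{n_i}A_*$, rather than with $L_0$ of some completed direct sum which may behave differently. Once this identification is in hand, the remainder of the argument is a clean application of \cite[Proposition 1.9]{HoveySS} together with the retract argument. The nonexistence of an analogue of this Proposition at higher heights is precisely why pro-free modules need not be flat beyond height 1.
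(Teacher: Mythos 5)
Your argument is correct and takes essentially the same approach as the paper, which views $M_*$ as a coproduct of shifted copies of $R_*$ in $\Mod^\wedge_{R_*}$ and deduces exactness of $M_*\otimes_{R_*}(-)$ from exactness of $L$-completed direct sums via \cite[Proposition 1.9]{HoveySS}. The only small imprecision is in your first step: the isomorphism $L_0\bigl(L_0F_*\otimes_{R_*}A_*\bigr)\cong L_0\bigl(F_*\otimes_{R_*}A_*\bigr)$ does not follow from idempotence of $L_0$ alone but is the content of \cite[A.7]{HoveyStrickland}, which the paper invokes explicitly in the proof of \Cref{prop: pro-free tensor}.
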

	
	\begin{proof}
	If $M_*$ is a pro-free $R_*$-module, it is a coproduct of (possibly shifted) copies of $R_*$ in the category $\Mod^\wedge_{R_*}$. Correspondingly, $M_* \otimes_{R_*} N_*$ is a coproduct of possibly shifted copies of $N_*$, which can be taken in $\Mod^\wedge_*$. This functor is exact because coproducts in $\Mod^\wedge_*$ are exact \cite[Proposition 1.9]{HoveySS}. Clearly, a coproduct of copies of $N_*$ is zero iff $N_*$ is zero, which together with exactness implies conservativity.
	\end{proof}
	
	\begin{lem}\label{lem: pro-free base change}
	Pro-freeness is preserved by base change: if $M_*$ is pro-free over $R_*$ and $R_* \to S_*$ is a map of rings in $\Mod_*^\wedge$, then $M_* \otimes_{R_*} S_*$ is pro-free over $S_*$.
	\end{lem}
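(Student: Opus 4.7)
First I would write $M_* = L_0 F_*$ with $F_* = \bigoplus_{i \in I} \Sigma^{n_i} R_*$ a free graded $R_*$-module, and form $G_* := F_* \otimes^{\mathrm{alg}}_{R_*} S_* \cong \bigoplus_{i \in I} \Sigma^{n_i} S_*$, a free graded $S_*$-module. Then $L_0 G_*$ is pro-free over $S_*$ by definition, so the goal reduces to exhibiting a natural isomorphism $L_0 G_* \cong M_* \otimes_{R_*} S_*$. The comparison map comes from the unit $F_* \to L_0 F_* = M_*$, which induces
\[ G_* = F_* \otimes^{\mathrm{alg}}_{R_*} S_* \longrightarrow M_* \otimes^{\mathrm{alg}}_{R_*} S_* \longrightarrow L_0\bigl(M_* \otimes^{\mathrm{alg}}_{R_*} S_*\bigr) = M_* \otimes_{R_*} S_*; \]
since the codomain is $L$-complete, this composite factors uniquely through a natural map $\phi : L_0 G_* \to M_* \otimes_{R_*} S_*$, which is the candidate isomorphism.

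Next I would check that $\phi$ is an isomorphism using the $L$-complete analogue of Nakayama's lemma: a map between $L$-complete $\Z_p$-modules is an isomorphism if and only if it is so after reduction mod $p$ (see \cite[Theorem A.6]{HoveyStrickland}). The key technical input is the natural identification $(L_0 X)/p \cong X/p$ for any $\Z_p$-module $X$, which I would obtain from the long exact sequence of derived $L$-completion applied to multiplication by $p$ on $X$, combined with the height-one fact that every $\F_p$-vector space is $L$-complete (\cite[Proposition 1.9]{HoveySS} applied to $\F_p$ itself). Granting this, one computes
\[ (L_0 G_*)/p \cong G_*/p \cong \bigoplus_{i \in I} \Sigma^{n_i}(S_*/p) \]
and
\[ (M_* \otimes_{R_*} S_*)/p \cong (M_* \otimes^{\mathrm{alg}}_{R_*} S_*)/p \cong (M_*/p) \otimes^{\mathrm{alg}}_{R_*/p}(S_*/p), \]
where $M_*/p \cong F_*/p \cong \bigoplus_{i}\Sigma^{n_i}(R_*/p)$ forces the second expression also to be $\bigoplus_{i}\Sigma^{n_i}(S_*/p)$. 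Under these identifications $\phi$ sends the $i$-th basis element to the $i$-th basis element, so $\phi \bmod p$ is the identity, and therefore $\phi$ itself is an isomorphism.

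The hard part is really the identity $(L_0 X)/p \cong X/p$. This rests on the height-one phenomenon that direct sums of $L$-complete $\Z_p$-modules are again $L$-complete, which the paper has emphasized fails at higher heights. Apart from this key input, the rest is formal bookkeeping about commuting ordinary relative tensor products with reduction mod $p$ and tracking the image of the chosen basis.
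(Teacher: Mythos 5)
Your proof has a genuine gap at the Nakayama step. The claim that ``a map between $L$-complete $\Z_p$-modules is an isomorphism if and only if it is so after reduction mod $p$'' is false: the quotient map $\Z/p^2 \to \Z/p$ is a map of $L$-complete modules that becomes the identity after reduction mod $p$, yet is obviously not an isomorphism. What \cite[Theorem A.6]{HoveyStrickland} actually buys you is surjectivity (a map of $L$-complete modules whose cokernel vanishes mod $p$ is surjective, since the cokernel is $L$-complete). Injectivity does not follow from a mod-$p$ check alone; the paper's own \Cref{lem: pro-free mod p} gets it by a snake-lemma argument, but that argument requires both rows of the diagram to be short exact, which forces $p$-torsion-freeness of the source and target. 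Here $S_*$ is not assumed $p$-torsion-free, so $L_0 G_*$ and $M_* \otimes_{R_*} S_*$ may have $p$-torsion and the snake-lemma patch fails. In other words, you have proved surjectivity of $\phi$ but not injectivity.

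The argument can be repaired without changing its skeleton: rather than reducing mod $p$, appeal directly to \cite[A.7]{HoveyStrickland} (this is the same input as in the paper's \Cref{prop: pro-free tensor}), which gives $L_0(G_*) = L_0(F_* \otimes_{R_*} S_*) \cong L_0(L_0(F_*) \otimes_{R_*} S_*) = M_* \otimes_{R_*} S_*$, making $\phi$ an isomorphism by inspection. But notice that once you invoke this, the entire comparison-map and mod-$p$ scaffolding becomes unnecessary. The paper's proof is a one-liner precisely because it avoids unwinding $L_0$: a pro-free $M_*$ is by definition a coproduct of shifted copies of $R_*$ in $\Mod^\wedge_{R_*}$, and the $L$-completed base-change functor $(-) \otimes_{R_*} S_* : \Mod^\wedge_{R_*} \to \Mod^\wedge_{S_*}$ is a left adjoint (to restriction of scalars), hence preserves coproducts, so $M_* \otimes_{R_*} S_*$ is a coproduct of shifted copies of $S_*$, which is exactly pro-freeness over $S_*$.
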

	
	\begin{proof}
	Again, $M_*$ is a coproduct of copies of $R_*$ in the category $\Mod^\wedge_{R_*}$. The tensor product is a left adjoint, so distributes over this coproduct.
	\end{proof}
	
	\begin{lem}\label{lem: pro-free mod p}
	Suppose that $R_* \in \CAlg_*^\wedge$ and $M_* \in \Mod^\wedge_{R_*}$. Suppose also that $R_*$ is $p$-torsion-free. Then $M_*$ is pro-free over $R_*$ iff $M_*$ is $p$-torsion-free and $M_*/p$ is free over $R_*/p$.
	\end{lem}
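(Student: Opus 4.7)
The plan is to treat the two implications of the iff separately, reducing each to a short calculation built on the short exact sequence \eqref{eq: L-completion Milnor sequence} and the long exact sequence of the derived functors $L_\bullet$.

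For the forward implication, I would start with $M_* = L_0 F_*$ for some free $R_*$-module $F_*$. Since $R_*$ is $p$-torsion-free, so is $F_*$, so by \eqref{eq: L-completion Milnor sequence} we have $L_0 F_* = F_*{}^\wedge_p$ and $L_1 F_* = 0$. The quotient $F_*/p$, being $p$-torsion, is already $L$-complete, so $L_0(F_*/p) = F_*/p$ and $L_1(F_*/p) = 0$. Applying the long exact sequence for $L_\bullet$ to $0 \to F_* \xrightarrow{p} F_* \to F_*/p \to 0$ then produces a short exact sequence
\[
0 \to M_* \xrightarrow{p} M_* \to F_*/p \to 0,
\]
showing that $M_*$ is $p$-torsion-free and that $M_*/p \cong F_*/p$ is free over $R_*/p$.

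For the converse, I would choose an $R_*/p$-basis $\{\bar e_\alpha\}$ of $M_*/p$, lift the elements to $e_\alpha \in M_*$, and consider the free $R_*$-module $F_* = \bigoplus_\alpha R_*\{e_\alpha\}$ together with the evident $R_*$-linear map $\phi\colon F_* \to M_*$. By construction, $\phi$ is an isomorphism mod $p$. Since $F_*$ and $M_*$ are both $p$-torsion-free, the short exact sequences $0 \to F_*/p \to F_*/p^{n+1} \to F_*/p^n \to 0$ (and the analogue for $M_*$) exist and are compatible under $\phi$, so a routine induction with the five-lemma promotes the mod-$p$ isomorphism to an isomorphism $\phi\colon F_*/p^n \xrightarrow{\cong} M_*/p^n$ for every $n$. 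The transition maps are surjective, so Mittag-Leffler gives an isomorphism $F_*{}^\wedge_p \xrightarrow{\cong} M_*{}^\wedge_p$ in the limit. Using \eqref{eq: L-completion Milnor sequence} once more, the hypothesis that $F_*$ and $M_*$ are $p$-torsion-free (together with the assumption that $M_*$ is $L$-complete) identifies the two sides with $L_0 F_*$ and $M_*$ respectively, giving $M_* \cong L_0 F_*$ and exhibiting $M_*$ as pro-free.

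No step is genuinely hard; the one point that warrants care is the bookkeeping that lets us freely swap ``$L$-completion'' for ``$p$-adic completion.'' This coincidence is exactly what the $p$-torsion-freeness hypothesis on $R_*$ is there to guarantee (via $\Tor_1^{\Z}(\Z/p^n, -) = 0$ and hence vanishing of the $\lim^1$ term in \eqref{eq: L-completion Milnor sequence}), and once it is established on both $F_*$ and $M_*$ the rest of the argument collapses to a standard Nakayama-style inverse-limit manipulation.
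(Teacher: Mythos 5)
Your proof is correct. The forward direction is essentially the paper's argument: you identify $L_0 F_*$ with $(F_*)^\wedge_p$ using $p$-torsion-freeness (the paper cites \cite[Proposition A.4]{HoveyStrickland} for the last step; you derive it from the long exact sequence of the $L_\bullet$, together with the observation that $F_*/p$ is annihilated by $p$ and hence $L_0(F_*/p) = F_*/p$ and $L_1(F_*/p) = 0$; both are fine). One small imprecision: you write ``$F_*/p$, being $p$-torsion, is already $L$-complete'' --- not all $p$-torsion modules are $L$-complete (e.g.\ $\Z/p^\infty$ is not); the relevant fact is that $F_*/p$ is annihilated by $p$, i.e.\ has \emph{bounded} $p$-torsion.

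Your converse, however, takes a genuinely different route from the paper's. Both proofs begin identically, using the $p$-torsion-freeness hypotheses to pass from $L$-completion to honest $p$-adic completion, and both construct the comparison map $\phi\colon F_* \to M_*$ lifting a basis of $M_*/p$. The divergence is in how the mod-$p$ isomorphism is promoted. The paper applies the snake lemma to the multiplication-by-$p$ diagram for $L_0(\phi)\colon L_0(F_*) \to M_*$, concludes that $p$ acts invertibly on $\ker$ and $\coker$, and invokes the fact that an $L$-complete $\Z_p$-module on which $p$ acts invertibly is zero \cite[Theorem A.6(d,e)]{HoveyStrickland}. You instead run a five-lemma induction along the tower $F_*/p^n \to M_*/p^n$ (using the short exact sequences $0 \to F_*/p \to F_*/p^{n+1} \to F_*/p^n \to 0$, which require torsion-freeness of $F_*$), then pass to the limit via surjectivity of the transition maps and identify the two $p$-completions with $L_0F_*$ and $M_*$. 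Your version is more elementary in that it avoids the structure theorem for $L$-complete modules, at the cost of a slightly longer bookkeeping argument; the paper's snake-lemma version is shorter once that input is available. Both are valid.
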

	
	\begin{proof}
	Suppose that $M_*$ is pro-free over $R_*$, and write $M_* = L_0\left(\bigoplus_\alpha \Sigma^{n_\alpha} R_*\right)$. By the exact sequence \eqref{eq: L-completion Milnor sequence}, $M_*$ is the same as the $p$-completion of $\bigoplus_\alpha \Sigma^{n_\alpha} R_*$, and is, in particular, $p$-torsion-free. By \cite[Proposition A.4]{HoveyStrickland},
	\[	L_0\left(\bigoplus_\alpha \Sigma^{n_\alpha} R_*\right)/p = \left(\bigoplus_\alpha \Sigma^{n_\alpha} R_*\right)/p = \bigoplus_\alpha \Sigma^{n_\alpha} (R_*/p),	\]
	which is clearly free over $R_*/p$ (and flat, in particular).
	
	For the converse, suppose that $M_*$ is $L$-complete and $p$-torsion-free and $M_*/p$ is free over $R_*/p$. Again using \eqref{eq: L-completion Milnor sequence}, we see that the natural surjection $M_* \to (M_*)^\wedge_p$ is an isomorphism, so that $M_*$ is honestly $p$-complete. Choose generators for $M_*/p$ as an $R_*/p$-module, and lift them to a map
	\[	\phi:F_* \to M_*	\]
	from a free graded $R_*$-module, which is an isomorphism mod $p$. Again, we observe that $L_0(F_*) = (F_*)^\wedge_p$, that it is $p$-torsion-free, and that $L_0(F_*)/p = F_*/p$. Applying the snake lemma to the diagram of graded $\Z_p$-modules
	\[	\xymatrix{ 0 \ar[r] &  L_0(F_*) \ar[d]_{\phi^\wedge} \ar[r]^p & L_0(F_*) \ar[d]^{\phi^\wedge} \ar[r] & F_*/p \ar[d] \ar[r] & 0 \\
				0 \ar[r] & M_* \ar[r]^p & M_* \ar[r] & M_*/p \ar[r] & 0, }	\]
	we see that multiplication by $p$ is an isomorphism on $\ker(\phi^\wedge)$ and $\coker(\phi^\wedge)$. Both of these are $L$-complete graded $\Z_p$-modules, and this implies that they are zero, by \cite[Theorem A.6(d,e)]{HoveyStrickland}.	
	\end{proof}
	
	\begin{lem}\label{lem: pro-free wedge of spheres}
	Let $R$ be a homotopy commutative $K(1)$-local ring spectrum, and let $M$ be a $K(1)$-local $R$-module. Then $M_*$ is pro-free over $R_*$ if and only if there is an equivalence of $K(1)$-local $R$-modules,
	\[	M \simeq \bigvee \Sigma^{n_\alpha}R.	\]
	\end{lem}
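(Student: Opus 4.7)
The plan is to prove the $(\Rightarrow)$ direction constructively; the $(\Leftarrow)$ direction will follow from the same key computation. For $(\Rightarrow)$, assume $M_*$ is pro-free, with a chosen isomorphism $M_* \cong L_0\bigoplus_\alpha \Sigma^{n_\alpha}R_*$. Pick lifts $x_\alpha \in \pi_{n_\alpha}M$ of the images of the free generators, and extend each to an $R$-linear map $\Sigma^{n_\alpha}R \to M$. Since $M$ is $K(1)$-local, these assemble into a map of $K(1)$-local $R$-modules
\[ \phi: \bigvee_\alpha \Sigma^{n_\alpha}R \to M \]
out of the $K(1)$-local wedge. Because both source and target are $K(1)$-local, it suffices to show $\phi_*$ is an isomorphism.

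The key technical input, which will also handle $(\Leftarrow)$, is the calculation
\[ \pi_*\bigvee_\alpha \Sigma^{n_\alpha}R \cong L_0\bigoplus_\alpha \Sigma^{n_\alpha}R_*. \]
I would deduce this by writing the wedge as $L_{K(1)}(R \sma W)$ with $W = \bigvee_{\mathrm{uncompleted}} \Sigma^{n_\alpha}S$ (an uncompleted wedge of spheres), and applying the Hovey--Strickland short exact sequence computing $\pi_*L_{K(1)}(R \sma W)$ in terms of the derived completion functors $L_0$ and $L_1$ applied to $\pi_*(R \sma W) = \bigoplus_\alpha \Sigma^{n_\alpha}R_*$. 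The $L_1$-term vanishes because each $R_*$ is $L$-complete (so $L_1R_* = 0$ by \cite[Theorem A.6]{HoveyStrickland}) and because direct sums of $L$-complete modules are exact at height $1$ by \cite[Proposition 1.9]{HoveySS}, leaving only the $L_0$-term.

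Granting the key calculation, $\phi_*$ is identified with the $L$-completion of the tautological map $\bigoplus_\alpha \Sigma^{n_\alpha}R_* \to M_*$ sending free generators to the chosen lifts, which is an isomorphism after $L$-completion by construction. Conversely, an equivalence $M \simeq \bigvee \Sigma^{n_\alpha}R$ combined with the same calculation identifies $M_*$ as the $L$-completion of a free $R_*$-module, hence pro-free by definition.

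The main obstacle is the verification of the key calculation, specifically the interchange between infinite direct sums and the derived completion functors $L_i$. At height $1$ the homological algebra of $L$-complete $\Z_p$-modules is well behaved thanks to the exactness of direct sums, so this should go through, but one must carefully confirm that $L_1$ truly vanishes on the infinite direct sum in question.
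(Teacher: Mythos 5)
Your proof is correct and follows essentially the same path as the paper's, with one useful difference: where the paper simply cites Hovey's Theorem 7.3 from \cite{HoveySS} (that $\pi_*$ carries $K(1)$-local coproducts to $L$-complete direct sums), you unpack that theorem into an explicit derivation. Your key calculation $\pi_*\bigvee_\alpha \Sigma^{n_\alpha}R \cong L_0\bigoplus_\alpha \Sigma^{n_\alpha}R_*$ is precisely Hovey's Theorem 7.3 specialized to shifts of $R$, so you are not really taking an independent route, but making the paper's black box transparent; that is arguably a gain in self-containedness.

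On the point you flag as the ``main obstacle'': it does go through, and you have already cited the right ingredient. The relevant fact is not just that direct sums are exact as an abstract category-theoretic statement, but that \cite[Proposition 1.9]{HoveySS} shows, at height $1$, that $L_i\bigl(\bigoplus_\alpha M_\alpha\bigr) = 0$ for $i>0$ whenever each $M_\alpha$ is $L$-complete (this is what \emph{makes} coproducts exact in $\Mod^\wedge_*$). Since $R_*$ is $L$-complete (being the homotopy of a $K(1)$-local spectrum, by \cite[Lemma 7.2]{HoveySS}), each summand $\Sigma^{n_\alpha}R_*$ is $L$-complete, and so $L_1$ of the uncompleted direct sum vanishes. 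Two smaller points worth making explicit in a polished write-up: first, $R \sma W$ is $E(1)$-local because $E(1)$-localization is smashing, hence the class of $E(1)$-local spectra is closed under arbitrary wedges, and this is what licenses identifying $L_{K(1)}(R\sma W)$ with the $p$-completion $(R\sma W)^\wedge_p$ and invoking the $p$-completion short exact sequence
\[
0 \to L_0\pi_k(R\sma W) \to \pi_k L_{K(1)}(R\sma W) \to L_1\pi_{k-1}(R\sma W) \to 0;
\]
second, this same sequence is exactly the one Hovey uses, so at bottom you have reproduced his argument rather than found a new one, which is perfectly fine and a good sanity check.
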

	
	(Here, as always, the coproduct is taken in the $K(1)$-local category).
	
	\begin{proof}
	Suppose that $M_*$ is pro-free over $R_*$. Choose generators $x_\alpha \in M_{n_\alpha}$ such that the natural map
	\[	R_*\{x_\alpha\} \to M_*	\]
	becomes an isomorphism after $L$-completion. Each $x_\alpha$ corresponds to a map of spectra $S^{n_\alpha} \to M$, and they assemble to a map of $K(1)$-local $R$-modules
	\[	\bigvee \Sigma^{n_\alpha} R \to M.	\]
	This is an equivalence by a result of Hovey \cite[Theorem 7.3]{HoveySS}, which states that the functor $\pi_*$ sends ($K(1)$-local) coproducts to ($L$-complete) direct sums. The converse also follows from Hovey's result.
	\end{proof}
	
	Note that Hovey's proof uses the same, height-1-specific fact that direct sums are exact in $\Mod^\wedge_{R_*}$.
	
	\begin{prop}\label{prop: pro-free tensor}
	Suppose that $R$ is a $K(1)$-local homotopy commutative ring spectrum and $M$ and $N$ are $R$-modules, such that $M_*$ is pro-free over $R_*$. Then the natural map of $L$-complete modules,
	\[	M_* \otimes_{R_*} N_* \to \pi_*(M \sma_R N),	\]
	is an isomorphism.
	\end{prop}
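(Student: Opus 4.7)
The plan is to reduce to the case of a wedge of shifted copies of $R$, where the statement becomes essentially tautological, and then use that both sides of the claimed isomorphism commute with the relevant coproducts.

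First, I apply \Cref{lem: pro-free wedge of spheres} to choose an equivalence $M \simeq \bigvee_\alpha \Sigma^{n_\alpha} R$ of $K(1)$-local $R$-modules. Since the relative smash product $\sma_R$ is a left adjoint in each variable, it preserves $K(1)$-local coproducts, so smashing with $N$ over $R$ gives an equivalence
\[
M \sma_R N \;\simeq\; \bigvee_\alpha \Sigma^{n_\alpha} N.
\]
Taking homotopy groups and invoking Hovey's result \cite[Theorem 7.3]{HoveySS} (already used in the proof of \Cref{lem: pro-free wedge of spheres}) that $\pi_*$ converts $K(1)$-local coproducts into $L$-completed direct sums, I obtain
\[
\pi_*(M \sma_R N) \;\cong\; L_0\Bigl(\bigoplus_\alpha \Sigma^{n_\alpha} N_*\Bigr).
\]

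On the algebraic side, pro-freeness of $M_*$ means that $M_*$ is the coproduct of the $\Sigma^{n_\alpha} R_*$ in the category $\Mod^\wedge_{R_*}$. The functor $(-)\otimes_{R_*} N_*$ on $\Mod^\wedge_{R_*}$ is a left adjoint (its right adjoint being the $L$-complete internal Hom), so preserves coproducts, yielding
\[
M_* \otimes_{R_*} N_* \;\cong\; L_0\Bigl(\bigoplus_\alpha \Sigma^{n_\alpha} N_*\Bigr).
\]

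It remains only to check that the natural pairing $M_* \otimes_{R_*} N_* \to \pi_*(M \sma_R N)$ is identified with these two descriptions. Under the equivalence $M \simeq \bigvee_\alpha \Sigma^{n_\alpha} R$, the algebraic generator $x_\alpha \in M_{n_\alpha}$ corresponds to the inclusion $S^{n_\alpha} \to M$ of the $\alpha$-th wedge summand, and for any $y \in N_*$ the product $x_\alpha \cdot y$ in $\pi_*(M \sma_R N)$ is by construction the image of $y$ under the $\alpha$-th inclusion $\Sigma^{n_\alpha} N \hookrightarrow \bigvee_\beta \Sigma^{n_\beta} N$. Hence on the uncompleted direct sum the natural pairing is the identity, and $L$-completing gives the desired isomorphism.

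There is no serious obstacle here; the only thing that requires care is tracking naturality to ensure the natural pairing matches the identification on each wedge summand. The essential inputs — \Cref{lem: pro-free wedge of spheres}, the exactness of direct sums in $\Mod^\wedge_*$, and Hovey's theorem identifying $\pi_*$ of $K(1)$-local coproducts — are all height-$1$ phenomena that have already been recorded.
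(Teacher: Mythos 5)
Your proof is correct and follows essentially the same route as the paper's: decompose $M$ as a wedge of shifted copies of $R$ via \Cref{lem: pro-free wedge of spheres}, commute the relative smash with the $K(1)$-local coproduct, and apply Hovey's theorem to compute $\pi_*$ of the resulting coproduct. The only cosmetic difference is that where you appeal to the tensor product being a left adjoint in $\Mod^\wedge_{R_*}$, the paper instead reduces to an uncompleted free module and cites \cite[A.7]{HoveyStrickland} to identify $L_0(F_* \otimes_{R_*} N_*) \cong L_0(L_0(F_*) \otimes_{R_*} N_*)$; both arguments establish the same algebraic identification.
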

	
	\begin{proof}
	By the previous lemma, we can write $M$ as a wedge of suspensions of $R$, 
	\[	M \simeq \bigvee \Sigma^{n_\alpha}R \simeq R \sma \bigvee S^{n_\alpha}	\]
	(using the fact that the $K(1)$-local smash product is a left adjoint, so distributes over the $K(1)$-local coproduct). Thus,
	\[	M \sma_R N \simeq N \sma \bigvee S^{n_\alpha} \simeq \bigvee \Sigma^{n_\alpha} N.	\]
	Using Hovey's theorem again \cite[Theorem 7.3]{HoveySS}, we obtain
	\[	\pi_*(M \sma_R N) \cong L_0\left(\bigoplus \Sigma^{n_\alpha} N_*\right) \cong L_0(F_* \otimes_{R_*} N_*),	\]
	where $F_*$ is the free graded $R_*$-module on generators in the degrees $n_\alpha$. By \cite[A.7]{HoveyStrickland},
	\[	\pi_*(M \sma_R N) \cong L_0(L_0(F_*) \otimes_{R_*} N_*) \cong M_* \otimes_{R_*} N_*.	\]
	It is clear that this isomorphism is induced by the natural map.
	\end{proof}
	
	\subsection{Homological algebra of \texorpdfstring{$L$-complete}{L-complete} Hopf algebroids}\label{subsec: Lcomplete Hopf algebroids}
	
	We now turn to the problem of homological algebra over an $L$-complete Hopf algebroid. We begin with some definitions generalizing those of \cite{Baker}.
	\begin{defn}\label{Lcomplete Hopf algebroid}
	A \textbf{$L$-complete Hopf algebroid} is a cogroupoid object $(R_*, \Gamma_*)$ in $\CAlg_*^\wedge$, such that $\Gamma_*$ is pro-free as a left $R_*$-module. As usual, we write 
	\begin{align*}
		\eta_L, \eta_R: R_* &\to \Gamma_*  &\text{ for the left and right units,} \\
		\Delta: \Gamma_* &\to \Gamma_* \otimes_{R_*} \Gamma_* &\text{ for the comultiplication,} \\
		\epsilon: \Gamma_* &\to R_* &\text{ for the counit, and} \\
		\chi: \Gamma_* &\to \Gamma_* &\text{ for the antipode.}
	\end{align*}
	\end{defn}
	
	Note that $\chi$ gives an isomorphism between $\Gamma_*$ as a left $R_*$-module and $\Gamma_*$ as a right $R_*$-module, so that $\Gamma_*$ is also pro-free as a right $R_*$-module.
	
	\begin{rmk}
	At heights higher than 1, one has to deal with the fact that the left and right units generally do not act in the same way on the generators $(p, u_1, \dotsc, u_{n-1})$ with respect to which $L$-completeness is defined. Thus, Baker's definition has the additional condition that the ideal $(p, u_1, \dotsc, u_{n-1})$ is invariant. At height 1, this condition is trivial.
	\end{rmk}
	
	\begin{defn}\label{Lcomplete comodule}
	Let $(R_*, \Gamma_*)$ be an $L$-complete Hopf algebroid. A \textbf{left comodule} over $(R_*, \Gamma_*)$ (a \textbf{left $\Gamma_*$-comodule} for short) is $M_* \in \Mod^\wedge_{R_*}$ together with a coaction map
	\[	\psi: M_* \to \Gamma_* \otimes_{R_*} M_*	\]
	such that the diagrams
	\[	\xymatrix{ M_* \ar[r]^-{\psi} \ar[d]_{\psi} & \Gamma_* \otimes_{R_*} M_* \ar[d]^{\psi \otimes 1} & M_* \ar@{=}[dr] \ar[r]^-{\psi} & \Gamma_* \otimes_{R_*} M_* \ar[d]^{\epsilon \otimes 1}  \\
		\Gamma_* \otimes_{R_*} M_* \ar[r]_-{1 \otimes \Delta} & \Gamma_* \otimes_{R_*} \Gamma_* \otimes_{R_*} M_* & & M_*}	\]
	commute. Write $\Comod^\wedge_{\Gamma_*}$ for the category of left $\Gamma_*$-comodules.
	\end{defn}
	
	\begin{lem}
	The category of left $\Gamma_*$-comodules is abelian, and the forgetful functor $\Comod^\wedge_{\Gamma_*} \to \Mod^\wedge_{\Gamma_*}$ is exact.
	\end{lem}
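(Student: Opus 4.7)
The plan is to view $\Comod^\wedge_{\Gamma_*}$ as the Eilenberg--Moore category of coalgebras for the comonad $T = \Gamma_* \otimes_{R_*} (-)$ acting on $\Mod^\wedge_{R_*}$, then exploit the fact that this comonad is exact. The pro-freeness hypothesis on $\Gamma_*$ in \Cref{Lcomplete Hopf algebroid} is precisely what makes this strategy work: by \Cref{lem: pro-free flat}, pro-free modules are faithfully flat in $\Mod^\wedge_{R_*}$, so $T$ is an exact endofunctor of $\Mod^\wedge_{R_*}$.

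First I would show that kernels and cokernels exist in $\Comod^\wedge_{\Gamma_*}$ and are computed in $\Mod^\wedge_{R_*}$. Given a map $f\colon (M_*,\psi_M) \to (N_*,\psi_N)$ of comodules, form $K_* = \ker(f)$ and $C_* = \coker(f)$ in $\Mod^\wedge_{R_*}$. Exactness of $T$ yields $T(K_*) = \ker(T(f))$ and $T(C_*) = \coker(T(f))$. The coaction maps $\psi_M,\psi_N$ intertwine with $f$, so the universal properties produce unique lifts $\psi_K\colon K_* \to T(K_*)$ and $\psi_C\colon C_* \to T(C_*)$. The coassociativity and counit diagrams in \Cref{Lcomplete comodule} hold for $\psi_K$ and $\psi_C$ by a short diagram chase, again invoking exactness of $T$ (and of $T \circ T$, which is where flatness is really needed, since one has to identify $T(T(K_*))$ with a kernel in order to invoke uniqueness of lifts). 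The zero object in $\Mod^\wedge_{R_*}$ carries a unique comodule structure, so zero objects exist as well.

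Next I would verify that every monomorphism in $\Comod^\wedge_{\Gamma_*}$ is a kernel and every epimorphism is a cokernel. Since the forgetful functor $U\colon \Comod^\wedge_{\Gamma_*} \to \Mod^\wedge_{R_*}$ preserves kernels and cokernels by construction, it is faithful and reflects both isomorphisms and exactness. A monomorphism $f$ in $\Comod^\wedge_{\Gamma_*}$ is in particular a monomorphism in $\Mod^\wedge_{R_*}$ (as $U$ has a right adjoint $T$, hence preserves monomorphisms), and the canonical map $f \to \ker(\coker f)$ is an isomorphism after applying $U$, hence an isomorphism of comodules. Dually for epimorphisms. This establishes that $\Comod^\wedge_{\Gamma_*}$ is abelian, and exactness of $U$ is then immediate from its preservation of kernels and cokernels.

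The main obstacle, such as it is, is verifying that the induced maps $\psi_K$ and $\psi_C$ really do satisfy the comodule axioms, which requires identifying iterated applications of $T$ with iterated kernels/cokernels; this is exactly where the flatness consequence of \Cref{lem: pro-free flat} is essential, and where the argument would break down without pro-freeness of $\Gamma_*$ over $R_*$. Everything else is a formal consequence of the comonadic perspective once exactness of $T$ is in hand.
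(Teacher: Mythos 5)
Your approach is essentially the same as the paper's: both exploit flatness of $\Gamma_* \otimes_{R_*} (-)$ (via \Cref{lem: pro-free flat}) to construct the coaction on kernels and cokernels as unique lifts, then verify the comodule axioms by a diagram chase, and exactness of the forgetful functor follows by construction. The paper gives only the sketch for kernels and leaves the rest to the reader, whereas you additionally spell out the remaining abelian-category axioms, which is a reasonable elaboration rather than a different strategy. One small slip in your justification: you assert that $U$ preserves monomorphisms ``as $U$ has a right adjoint $T$,'' but having a right adjoint makes $U$ a \emph{left} adjoint, which preserves colimits and hence epimorphisms, not monomorphisms. The correct and readily available reason is the one you have already established, namely that $U$ preserves kernels (since you built kernels in $\Comod^\wedge_{\Gamma_*}$ to be computed in $\Mod^\wedge_{R_*}$), and a morphism with zero kernel is a monomorphism.
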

	
	\begin{proof}
	Suppose that
	\[	0 \to K_* \to M_* \stackrel{f}{\to} N_* \to 0	\]
	is an exact sequence of $R_*$-modules, and $f$ is a map of $\Gamma_*$-comodules. A coaction map can then be defined on $K_*$ via the diagram
	\[	\xymatrix{ 0 \ar[r] & K_* \ar@{-->}[d] \ar[r] & M_* \ar[d] \ar[r]^f & N_* \ar[d] \ar[r] & 0 \\
		0 \ar[r] & \Gamma_* \otimes_{R_*} K_* \ar[r] & \Gamma_* \otimes_{R_*} M_* \ar[r] & \Gamma_* \otimes_{R_*} N_* \ar[r] & 0. }	\]
	The bottom sequence is exact because $\Gamma_*$ is flat in $\Mod_{R_*}^\wedge$, by \Cref{lem: pro-free flat}. One checks that this structure makes $K_*$ a comodule by the usual diagram chase. A similar proof works for cokernels.
	\end{proof}
	
	\begin{defn}\label{defn: extended comodule}
	An \textbf{extended comodule} is one of the form
	\[	M_* = \Gamma_* \otimes_{R_*} N_*,	\]
	where $N_* \in \Mod^\wedge_{R_*}$, with coaction $\Delta \otimes 1_{N_*}$.
	\end{defn}
	
	When working with uncompleted Hopf algebroids, one next constructs enough injectives in the comodule category by showing that a comodule extended from an injective $R_*$-module is injective \cite[A1.2.2]{Ravenel}. One cannot do this in this case, because $\Mod^\wedge_*$ does not have enough injectives \cite[Section 1.1]{HoveySS}. For example, if $I$ is an injective $L$-complete $\Z_p$-module containing a copy of $\Z/p$, then one can inductively construct extensions $\Z/p^n \to I$ and thus a nonzero map $\Z/p^\infty \to I$ -- but this means that $I$ is not $L$-complete. Thus, one instead has to use relative homological algebra. We take the following definitions from \cite[Section 2]{BarthelHeard}.
	
	\begin{defn}\label{defn: relative injective}
	A \textbf{relative injective} comodule is a retract of an extended comodule. A \textbf{relative monomorphism} of comodules is a comodule map $M_* \to N_*$ which is a split injection as a map of $R_*$-modules. A \textbf{relative short exact sequence} is a sequence
	\[	M_* \stackrel{f}{\to} N_* \stackrel{g}{\to} P_*	\]
	where the image of $f$ is the kernel of $g$, and $f$ is a relative monomorphism. A \textbf{relative injective resolution} of a comodule $M_*$ is a sequence
	\[	M_* = J^{-1}_* \to J^0_* \to J^1_* \to \dotsb	\]
	where 
	\begin{itemize}
		\item each $J^s_*$ is relative injective for $s \ge 0$,
		\item each composition $J^{s-1}_* \to J^s_* \to J^{s+1}_*$ is zero,
		\item and if $C^s_*$ is the cokernel of $J^{s-1}_* \to J^s_*$, the sequences
		\[	C^{s-1}_* \to J^s_* \to C^s_*	\]
		are relatively short exact.
	\end{itemize}
	\end{defn}
	
	\begin{defn}\label{defn: relative ext}
	Let $M_*$ and $N_*$ be two comodules over $(R_*, \Gamma_*)$. Let $J^\bullet_*$ be a relative injective resolution of $N_*$. Define
	\[	\wh{\Ext}_{\Gamma_*}(M_*, N_*)	\]
	to be the cohomology of the complex $\Hom_{\Comod^\wedge_{\Gamma_*}}(M_*, J^\bullet_*)).$
	
	Following our general conventions, we will simply write $\Ext_{\Gamma_*}(M_*, N_*)$ for this functor, where this does not cause confusion.
	\end{defn}
	
	\begin{prop}\label{prop: relative ext facts}\mbox{}
	\begin{enumerate}[(a)]
	\item	Every comodule has a relative injective resolution.
	\item The definition of $\wh{\Ext}$ above is independent of the choice of resolution.
	\item We have
	\[	\Ext^0_{\Gamma_*}(M_*, N_*) = \Hom_{\Comod^\wedge_{\Gamma_*}}(M_*, N_*).	\]
	\item If $N_*$ is relatively injective, then $\Ext^s_{\Gamma_*}(M_*, N_*)$ vanishes for $s > 0$.
	\item If $N_* = \Gamma_* \otimes_{R_*} K_*$ for an $R_*$-module $K_*$, then $\Ext^0_{\Gamma_*}(M_*, N_*) = \Hom_{\Mod^\wedge_{R_*}}(M_*, R_*)$.
	\end{enumerate}
	\end{prop}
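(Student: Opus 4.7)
The overall plan is to follow the standard program of relative homological algebra, building everything from the cobar (extended) resolution and exploiting the pro-freeness of $\Gamma_*$ (hence its flatness in $\Mod^\wedge_{R_*}$, via \Cref{lem: pro-free flat}). The main obstacle, if there is one, is purely bookkeeping: one must check that the constructions which work classically still make sense in the $L$-complete setting, where kernels and cokernels in $\Comod^\wedge_{\Gamma_*}$ are computed by first forgetting to $\Mod^\wedge_{R_*}$ (where things are already $L$-complete, by completeness of that subcategory under limits and cokernels).

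For part (a), I will construct the cobar resolution explicitly. Given a comodule $M_*$ with coaction $\psi_M$, the map $\psi_M : M_* \to \Gamma_* \otimes_{R_*} M_*$ is a relative monomorphism because $\epsilon \otimes 1$ splits it as an $R_*$-module map (by the counit axiom). Let $C^0_* = \coker(\psi_M)$, a comodule by the argument of the previous lemma, and set $J^0_* = \Gamma_* \otimes_{R_*} M_*$. Iterating — resolving $C^s_*$ by its own coaction to produce $J^{s+1}_* = \Gamma_* \otimes_{R_*} C^s_*$ — yields a resolution whose every term is extended, hence relatively injective, and whose every short exact piece $C^{s-1}_* \to J^s_* \to C^s_*$ is split as $R_*$-modules by $\epsilon \otimes 1$.

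Part (b) is the usual comparison theorem. Given two relative injective resolutions $J^\bullet_*$ and $K^\bullet_*$ of $N_*$, I build a chain map $J^\bullet_* \to K^\bullet_*$ over the identity of $N_*$ inductively: at each stage, the already-constructed map on the cokernel $C^s_{J,*} \to C^s_{K,*}$ extends to $J^{s+1}_* \to K^{s+1}_*$ by applying the defining property of relative injectives to the relative monomorphism $C^s_{J,*} \hookrightarrow J^{s+1}_*$. (To make sense of "relative injectivity" of a retract of an extended comodule, I will first verify that extended comodules $\Gamma_* \otimes_{R_*} L_*$ have the desired lifting property against relative monomorphisms — this is the adjunction of part (e), applied to the splitting in $\Mod^\wedge_{R_*}$.) The same technique produces chain homotopies between any two such lifts.

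Part (c) is immediate once one knows that $\Hom_{\Comod^\wedge_{\Gamma_*}}(M_*, -)$ is left exact on relative short exact sequences, which in turn follows from the definition of relative monomorphism: a diagram chase using the $R_*$-splitting shows that $\Hom_{\Comod}(M_*, N_*)$ is precisely the kernel of $\Hom_{\Comod}(M_*, J^0_*) \to \Hom_{\Comod}(M_*, J^1_*)$. Part (d) is handled by observing that if $N_*$ is already relatively injective, then the resolution with $J^0_* = N_*$ (identity map) and $J^s_* = 0$ for $s \ge 1$ is a valid relative injective resolution, so $\Ext^s_{\Gamma_*}(M_*, N_*) = 0$ for $s > 0$; combined with part (b), this is independent of resolution. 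Finally, for part (e) — which as stated surely should read $\Hom_{\Mod^\wedge_{R_*}}(M_*, K_*)$ rather than $\Hom_{\Mod^\wedge_{R_*}}(M_*, R_*)$ — I establish the adjunction by sending a comodule map $f : M_* \to \Gamma_* \otimes_{R_*} K_*$ to $(\epsilon \otimes 1) \circ f$, with inverse $g \mapsto (1 \otimes g) \circ \psi_M$. The comodule axiom on $M_*$ and the counit axiom on $\Gamma_*$ show these assignments are mutually inverse, and all maps are automatically $L$-complete by construction.
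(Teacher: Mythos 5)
Your proof is correct and follows essentially the same path as the paper: the paper simply cites \cite[2.11, 2.12, 2.15]{BarthelHeard} for (a)--(c), notes (d) is trivial by taking $N_*$ as its own resolution, and proves (e) via the extended-comodule adjunction $\Hom_{\Comod^\wedge_{\Gamma_*}}(M_*, \Gamma_* \otimes_{R_*} K_*) \cong \Hom_{\Mod^\wedge_{R_*}}(M_*, K_*)$, which is exactly your argument. You have also correctly spotted that the displayed formula in part (e) of the statement is a typo: it should read $\Hom_{\Mod^\wedge_{R_*}}(M_*, K_*)$, as the paper's own proof makes clear.
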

	
	\begin{proof}
	The first three statements follow from identical arguments to those in \cite[2.11, 2.12, 2.15]{BarthelHeard}. (One should note, in particular, that if $M_*$ is a comodule, the coaction
	\[	M_* \to \Gamma_* \otimes_{R_*} M_*	\]
	is a relative monomorphism into a relative injective.) Statement (d) is then trivial, as we can take $N_*$ to be its own relative injective resolution. For (e), we use (c) and the adjunction
	\[	\Hom_{\Comod^\wedge_{\Gamma_*}}(M_*, \Gamma_* \otimes_{R_*} K_*) \cong \Hom_{\Mod^\wedge_{R_*}}(M_*, K_*).	\]
	\end{proof}
	
	\begin{prop}\label{prop: K1local ASS}
	Let $R$ be a $K(1)$-local homotopy commutative ring spectrum such that $R_*R$ is pro-free over $R_*$.Then for any $K(1)$-local spectrum $X$, the $K(1)$-local $R$-based Adams spectral sequence for $X$ has $E_2$ page
	\[	E_2 = \Ext_{\Comod^\wedge_{R_*R}}(R_*, R_*X).	\]
	\end{prop}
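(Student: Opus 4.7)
The plan is to identify the $E_2$-page by the standard cobar-complex argument, with all smash products and tensor products interpreted $K(1)$-locally and $L$-completely. First I would construct the cosimplicial $K(1)$-local spectrum $R^{\bullet+1} \sma X$, with $R^{n+1} \sma X$ in cosimplicial degree $n$, coface maps induced by the unit $S \to R$, and codegeneracies induced by the multiplication $R \sma R \to R$. The associated Tot-tower is by definition (a model of) the $K(1)$-local $R$-based Adams tower for $X$, so its Bousfield--Kan spectral sequence is the $R$-based Adams spectral sequence, with $E_1$-page equal to $\pi_*(R^{\bullet+1} \sma X)$ viewed as a cosimplicial $R_*$-module.

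Next I would compute this $E_1$-page using the K\"unneth isomorphism of \Cref{prop: pro-free tensor}. By hypothesis $R_*R$ is pro-free over $R_*$, and by \Cref{lem: pro-free base change} pro-freeness is preserved under base change, so an induction on $n$ gives that $(R_*R)^{\otimes_{R_*} n}$ is pro-free over $R_*$ and
\[
\pi_*(R^{n+1} \sma X) \;\cong\; (R_*R)^{\otimes_{R_*} n} \otimes_{R_*} R_*X,
\]
with cosimplicial structure maps corresponding to $\eta_L, \eta_R, \Delta, \epsilon$ for $R_*R$ together with the coaction $\psi: R_*X \to R_*R \otimes_{R_*} R_*X$. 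In other words, the $E_1$-page is precisely the cobar complex for the $L$-complete Hopf algebroid $(R_*, R_*R)$ with coefficients in $R_*X$.

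To identify the resulting $E_2$-page with the relative $\Ext$ of \Cref{defn: relative ext}, I would show that the augmented cobar complex
\[
R_*X \longrightarrow R_*R \otimes_{R_*} R_*X \longrightarrow (R_*R)^{\otimes_{R_*} 2} \otimes_{R_*} R_*X \longrightarrow \cdots
\]
is a relative injective resolution of $R_*X$: each term past the augmentation is an extended comodule in the sense of \Cref{defn: extended comodule}, hence relatively injective by \Cref{defn: relative injective}, and the counit $\epsilon: R_*R \to R_*$ supplies the $R_*$-linear contracting homotopy required for relative exactness. Applying $\Hom_{\Comod^\wedge_{R_*R}}(R_*,-)$ to this resolution and invoking \Cref{prop: relative ext facts}(e) recovers the cobar complex whose cohomology is the $E_2$ page, identifying it with $\Ext_{\Comod^\wedge_{R_*R}}(R_*, R_*X)$.

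The main obstacle, and the only step that genuinely uses the height-$1$ setting rather than purely formal Hopf-algebroid manipulations, is ensuring that the iterated $L$-complete tensor products behave well. One has to know at each cosimplicial stage that the accumulated factor $(R_*R)^{\otimes_{R_*} n}$ is pro-free so that \Cref{prop: pro-free tensor} continues to apply, and one has to verify that the contracting homotopy built from $\epsilon$ descends through $L$-completion. Both follow from the exactness of direct sums in $\Mod^\wedge_*$ exploited throughout \Cref{subsec: pro-freeness}, so these technicalities go through essentially formally once the $L$-complete framework of \Cref{sec: completehopfalgebroids} is in hand.
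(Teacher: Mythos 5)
Your proposal is correct and follows essentially the same route as the paper's proof: form the cosimplicial object $R^{\wedge \bullet+1}\wedge X$, identify its Bousfield--Kan spectral sequence as the Adams spectral sequence, use the K\"unneth isomorphism of \Cref{prop: pro-free tensor} (justified by pro-freeness of $R_*R$) to compute the $E_1$-page as the cobar complex, and then recognize this as a relative injective resolution of $R_*X$ computing $\wh{\Ext}$. The paper simply compresses the last two steps into one sentence; your version makes explicit the role of extended comodules, the counit as contracting homotopy, and the iterated pro-freeness needed to keep applying the K\"unneth formula, which is exactly the bookkeeping the paper leaves implicit.
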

	
	\begin{proof}
	This spectral sequence is the same as the Bousfield-Kan homotopy spectral sequence of the cosimplicial object
	\[	C^\bullet := R^{\sma \bullet + 1} \sma X.	\]
	This is of the form
	\[	E_1 = \pi_*(R^{\sma \bullet + 1} \sma X) \Rightarrow \pi_*\Tot(C^\bullet).	\]
	By \Cref{prop: pro-free tensor}, we have
	\[	\pi_*(R^{\sma s + 1} \sma X) = R_*R^{\otimes_{R_*} s} \otimes_{R_*} R_*X,	\]
	which is a resolution of $R_*X$ by extended comodules, so that the $E_2$ page is precisely $\Ext_{R_*R}(R_*, R_*X)$.
	\end{proof}
	
	We next discuss convergence of the spectral sequence. The Bousfield-Kan spectral sequence converges conditionally to the homotopy of its totalization, so this spectral sequence converges conditionally to $\pi_*X$ if and only if the map
	\[	X \to \holim R^{\sma \bullet + 1} \sma X	\]
	is an equivalence. Questions of this type were first studied by Bousfield \cite{Bousfield79}, and in the local case by Devinatz-Hopkins \cite{DH}. We recall their definitions here:
	
	\begin{defn}[{\cite[Appendix I]{DH}}]
	Let $R$ be a $K(1)$-local homotopy commutative ring spectrum. The class \textbf{$K(1)$-local $R$-nilpotent} spectra is the smallest class $\mathcal{C}$ of $K(1)$-local spectra such that:
	\begin{enumerate}
		\item $R \in \mathcal{C}$,
		\item $\mathcal{C}$ is closed under retracts and cofibers,
		\item and if $X \in \mathcal{C}$ and $Y$ is an arbitrary $K(1)$-local spectrum, then $X \sma Y \in \mathcal{C}$.
	\end{enumerate}
	\end{defn}
	
	\begin{prop}[{\cite[Appendix I]{DH}}]\label{prop: K1local ASS convergence}
	Assume that $X$ is $K(1)$-local $R$-nilpotent. Then the $K(1)$-local $R$-based Adams spectral sequence converges conditionally to $\pi_*X$.
	\end{prop}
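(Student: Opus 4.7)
The plan is to let $\mathcal{C}$ denote the class of $K(1)$-local spectra $X$ for which the map $X \to \holim_n \Tot_n(R^{\sma \bullet+1} \sma X)$ is an equivalence (equivalently, for which the Bousfield--Kan spectral sequence from \Cref{prop: K1local ASS} converges conditionally to $\pi_*X$), and to show $\mathcal{C}$ contains every $R$-nilpotent spectrum. As a first step, I would establish the equivalent description: the $R$-nilpotent class is the smallest class of $K(1)$-local spectra containing every $R$-module $R \sma Y$ and closed under retracts and cofibers. The containment of this smaller class in the $R$-nilpotent class follows from axioms (1), (3), and (2) respectively; conversely, one checks by induction on the construction of the smaller class that it is automatically smash-absorbing, using $(R \sma Z) \sma Y \simeq R \sma (Z \sma Y)$ together with the fact that smashing with any fixed $Y$ preserves retracts and cofibers.

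Given this reformulation, it suffices to verify that $\mathcal{C}$ contains every $R$-module and is closed under retracts and cofibers. For the first point, I would use the standard observation that for any $R$-module $X$, the $R$-action map $R \sma X \to X$ supplies an extra codegeneracy which splits the augmented cosimplicial object $X \to R^{\sma \bullet+1} \sma X$. This forces $X \to \Tot(R^{\sma \bullet+1} \sma X)$ to be an equivalence, which is more than enough to guarantee conditional convergence.

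Closure under retracts is automatic, since a retract of an equivalence is an equivalence. For closure under cofibers, given a cofiber sequence $X \to Y \to Z$ with $X, Y \in \mathcal{C}$, smashing levelwise with $R^{\sma \bullet+1}$ produces a levelwise cofiber sequence of cosimplicial $K(1)$-local spectra. Since $\Tot$ is a homotopy limit, it preserves fiber sequences, which in the stable $K(1)$-local category coincide with cofiber sequences. Thus the three totalizations form a cofiber sequence, and the natural map from the original cofiber sequence is an equivalence on the first two terms by hypothesis, hence an equivalence on $Z$ as well. The main technical obstacle is really the bookkeeping of how conditional convergence of Bousfield--Kan spectral sequences interacts with cofiber sequences of cosimplicial objects; once one accepts the behavior of $\Tot$ with respect to fiber sequences in the stable setting, the rest of the argument is a routine verification of the closure axioms defining $R$-nilpotence.
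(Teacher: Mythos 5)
Your argument is correct and is essentially the standard one from Devinatz--Hopkins, Appendix I, which is exactly what the paper cites for this proposition: reduce to free $R$-modules $R \sma Y$ via the ``smallest class closed under retracts and cofibers'' reformulation, use the extra codegeneracy from the multiplication on $R$ to get $\Tot$-convergence for those, and propagate through retracts and cofibers using exactness of $\Tot$ in the stable setting. The only small imprecision is your phrase ``every $R$-module $R \sma Y$,'' which should read ``every spectrum of the form $R \sma Y$'' (not every $R$-module is free), but since the extra-codegeneracy step only needs the free case, nothing in the argument is affected.
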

	
	Finally, we write down a change of rings theorem, which is an immediate generalization of \cite[Theorem 3.3]{HoveySadofsky}:
	
	\begin{prop}\label{prop: change of rings}
	Suppose that $(A, \Gamma_A) \to (B, \Gamma_B)$ is a morphism of $L$-complete Hopf algebroids such that the natural map
	\[	B \otimes_A \Gamma_A \otimes_A B \to \Gamma_B	\]
	is an isomorphism, and such that there exists a map $B \otimes_A \Gamma \to C$ such that the composition
	\[	A \stackrel{1 \otimes \eta_R}{\to} B \otimes_A \Gamma \to C	\]
	is pro-free. Then the induced map
	\[	\Ext_{\Gamma_A}^*(A, A) \to \Ext_{\Gamma_B}^*(B, B)	\]
	is an isomorphism.
	\end{prop}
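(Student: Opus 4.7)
The plan is to adapt the argument of \cite{HoveySadofsky} to the $L$-complete setting, proceeding in three main steps: set up a cobar description of both Ext groups, reduce to a quasi-isomorphism after base change along the pro-free map $A \to C$, and finally identify the two base-changed cobar complexes.

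First, by \Cref{prop: relative ext facts}, the unreduced cobar resolution
\[
A \longrightarrow \Gamma_A \longrightarrow \Gamma_A \otimes_A \Gamma_A \longrightarrow \Gamma_A \otimes_A \Gamma_A \otimes_A \Gamma_A \longrightarrow \cdots
\]
is a relative injective resolution of $A$ in $\Comod^\wedge_{\Gamma_A}$, since each term past $A$ is an extended comodule. Applying $\Hom_{\Comod^\wedge_{\Gamma_A}}(A, -)$ yields the standard cobar complex $C^*_A$ with $C^n_A = \Gamma_A^{\otimes_A n}$, whose cohomology is $\Ext^*_{\Gamma_A}(A, A)$. The analogous construction produces a complex $C^*_B$ computing $\Ext^*_{\Gamma_B}(B, B)$, and the Hopf algebroid morphism induces a map of cochain complexes $\phi \colon C^*_A \to C^*_B$ realizing the map of Ext groups in question.

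Next, I would reduce the claim to showing that $\phi$ becomes a quasi-isomorphism after base change along $A \to C$. Because this map is pro-free by hypothesis, \Cref{lem: pro-free flat} guarantees that $-\otimes_A C$ is exact and conservative on $\Mod^\wedge_A$. Consequently, a map of cochain complexes of $L$-complete $A$-modules is a quasi-isomorphism if and only if it becomes one after $-\otimes_A C$.

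The remaining task is to identify the two base-changed complexes. Using $\Gamma_B \cong B \otimes_A \Gamma_A \otimes_A B$ repeatedly, $\Gamma_B^{\otimes_B n}$ unfolds into iterated $\otimes_A$-tensor products involving $B$'s and $\Gamma_A$'s. Composing with the map $B \otimes_A \Gamma_A \to C$ provided by the hypothesis, and invoking the relative K\"unneth formula \Cref{prop: pro-free tensor} together with the base-change property \Cref{lem: pro-free base change}, one identifies $C \otimes_A C^*_B$ with $C \otimes_A C^*_A$ compatibly with the differentials, whence the result follows.

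The main obstacle will be the tensor-product bookkeeping in the last step: one has to verify associativity of $L$-completed tensor products in each collapsing step, and check that each intermediate module is pro-free so that the K\"unneth formula applies. The role of the auxiliary ring $C$ becomes clear here: $B$ itself may not be pro-free over $A$ via $\eta_R$, but $C$, which absorbs a further copy of $\Gamma_A$, can be chosen to achieve pro-freeness, thereby enabling faithful descent.
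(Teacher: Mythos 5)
Your overall strategy — set up cobar complexes, use pro-freeness of $C$ over $A$ to reduce to a quasi-isomorphism after base change, and then identify the base-changed complexes — has a genuine gap in the final step, and it is precisely the step where all the content of the change-of-rings theorem lives.

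The problem is that $C \otimes_A C^\bullet_A$ and $C \otimes_A C^\bullet_B$ are \emph{not} isomorphic as cochain complexes, and no amount of K\"unneth bookkeeping will make them so. In degree $n$ you have $C \otimes_A \Gamma_A^{\otimes_A n}$ on one side, while on the other, unfolding $\Gamma_B \cong B \otimes_A \Gamma_A \otimes_A B$ gives
\[
	C \otimes_A \Gamma_B^{\otimes_B n} \;\cong\; C \otimes_A B \otimes_A \Gamma_A \otimes_A B \otimes_A \Gamma_A \otimes_A \cdots \otimes_A \Gamma_A \otimes_A B,
\]
with $n+1$ copies of $B$ interleaved among the $n$ copies of $\Gamma_A$. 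Composing with the given map $B \otimes_A \Gamma_A \to C$ does not collapse these extra $B$'s — that map is not an isomorphism, and there is no natural way to make the two complexes termwise agree. What is true is that the two complexes have the same \emph{cohomology} after tensoring with $C$, but that is exactly the conclusion, so you cannot invoke it as an input.

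What you are missing is the double-complex argument of Hovey--Sadofsky, which the paper cites and reuses essentially verbatim. Their proof constructs a first-quadrant double complex $D^{\bullet,\bullet}$ with the properties that (i) taking homology in one direction recovers the cobar complex $N(B,\Gamma_B)^\bullet$, and (ii) taking homology in the other direction yields a complex $R^\bullet$ receiving a map $g \colon N(A,\Gamma_A)^\bullet \to R^\bullet$ which becomes an isomorphism after tensoring with $B \otimes_A \Gamma_A$, and hence after further tensoring with $C$. At that point your reduction step applies — $C$ is pro-free over $A$, so $g$ is already an isomorphism — and the two spectral sequences of the double complex transport the isomorphism on $R^\bullet$ into the desired isomorphism of Ext groups. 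Your plan correctly identifies the role of $C$ as a faithfully flat base change and correctly sets up the cobar description of Ext, but without the double complex there is no way to compare the two cobar complexes directly, so the argument as written does not go through.
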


	\begin{proof}
	Virtually the same proof as in \cite{HoveySadofsky} works here. Indeed, the standard cobar complex
	\[	N(A, \Gamma_A)^\bullet = [A \to \Gamma_A \to \Gamma_A \otimes_A \Gamma_A \to \dotsb]	\]
	is a resolution of $A$ by extended comodules, and so can be used to compute Ext in exactly the same way as in the uncompleted case. Hovey and Sadofsky then give a double complex with the properties that
	\begin{enumerate}
		\item its homology in the horizontal direction is the cobar complex $N(B, \Gamma_B)^\bullet$ computing $\Ext_{\Gamma_B}(B, B)$,
		\item and, writing $R^\bullet$ for the homology in the vertical direction, there is a map of complexes
		\[	g:N(A, \Gamma_A)^\bullet \to R^\bullet	\]
		which becomes an isomoprhism after tensoring both sides on the left with $B \otimes_A \Gamma_A$.
	\end{enumerate}
	In particular, $C \otimes_A g$ is an isomorphism of complexes. But $C$ is pro-free over $A$, so $g$ is an isomorphism by \Cref{lem: pro-free flat}. The remainder of the argument follows formally from a consideration of the two spectral sequences associated to the double complex.
	\end{proof}

\subsection{The Hopf algebroids for \texorpdfstring{$K$}{K} and \texorpdfstring{$KO$}{KO}}\label{subsec: Hopf algebroids K KO}

The $K$-theory spectrum has a group action by $\Z_p^\times$ via $E_\infty$ ring maps. For $k \in \Z_p^\times$, we write $\psi^k$ for the corresponding endomorphism of $K$, called the $k$th \textbf{Adams operation}. On homotopy, writing $u$ for the Bott element, we have
\begin{equation}\label{eq: Adams ops on K}
	\psi^k: K_*\to K_*: \quad u^n\mapsto k^n\, u^n.
\end{equation}

The group $\Z_p^\times$ has a maximal finite subgroup $\mu$ of order $p-1$, and we write $KO = K^{h\mu}$. (This agrees with the $p$-completion of the real $K$-theory spectrum at $p=2$ and 3). Then $KO$ inherits an action by the topologically cyclic group $\Z_p^\times/\mu$, which we also refer to as an action by Adams operations.

The Adams operations give us a way to analyze the completed cooperations algebras $K_*K$ and $KO_*KO$. Define
\[	\Phi_K: K_*K \to  \Cts(\Z_p^\times, K_*)	\]
as adjoint to the map
\[	K_*K \times \Z_p^\times \to K_*	\]
defined by taking an element $x: S^0\to K\wedge K$ and $p$-adic unit $k\in \Z_p^\times$ to the composite
	\[
	\begin{tikzcd}
		S^0\arrow[r,"x"] & K\wedge K\arrow[r,"K\wedge \psi^k"] & K\wedge K\arrow[r,"m"] & K
	\end{tikzcd}
	\]
Likewise, there is a map
\[	\Phi_{KO}: KO_*KO \to \Cts(\Z_p^\times/\mu, KO_*).	\]

\begin{thm}[{cf. \cite{hovey2004}}]\label{thm: K Hopf algebroid}
	The map 
	\[	\Phi_K:K_*K \to \Cts(\Z_p, K_*)	\]
	is an isomorphism. It induces an isomorphism of Hopf algebras
	\[	(K_*, K_*K) \cong (K_*, \Cts(\Z_p^\times, K_*)),	\]
	where the latter has the following Hopf algebra structure:
	\begin{itemize}
		\item The unit $\eta_L = \eta_R: K_*\to \Cts(\Z_p^\times, K_*)$ is the inclusion of constant functions.
		\item The coproduct,
		\[
			\Delta:  \Cts(\Z_p^\times, K_*)\to \Cts(\Z_p^\times, K_*)\otimes \Cts(\Z_p^\times, K_*)\cong \Cts(\Z_p^\times\times \Z_p^\times, K_*),
		\]
		is given by sending a function $f$ to the function $(a,b)\mapsto f(ab)$.
		\item The antipode $\Cts(\Z_p^\times, K_*)\to \Cts(\Z_p^\times, K_*)$ sends a function $f$ to $a\mapsto f(a^{-1})$. 
		\item The augmentation map $\Cts(\Z_p^\times, K_*)\to K_*$ is given by evaluation at $1$.
	\end{itemize}
	Analogous statements hold for $KO$.
\end{thm}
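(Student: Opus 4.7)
The plan is to verify in three steps that $\Phi_K$: (a) is well-defined as a map into continuous functions, (b) respects all Hopf algebroid structure maps, and (c) is an isomorphism. The first two steps are formal; the third is the heart of the argument.

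For (a), continuity of $k \mapsto m \circ (K \sma \psi^k)(x)$ in $k$ follows from the fact that $\Z_p^\times$ acts on $K$ by $E_\infty$-ring maps varying continuously in $k$ in the $K(1)$-local category. For (b), I would check each structure map in turn. The left and right units on $K_*K$ match the inclusion of constant functions since $\psi^k$ is a ring map fixing the unit; the augmentation is evaluation at $1 \in \Z_p^\times$ since $\psi^1 = \mr{id}$; the coproduct on $K_*K$ corresponds to convolution on $\Cts(\Z_p^\times \times \Z_p^\times, K_*)$ via the composition law $\psi^a \circ \psi^b = \psi^{ab}$; and the antipode corresponds to $f \mapsto (a \mapsto f(a^{-1}))$ via $\psi^{k^{-1}} = (\psi^k)^{-1}$ together with the smash-factor swap $\tau: K \sma K \to K \sma K$.

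For (c), I would identify $K$ with the height-1 Morava $E$-theory $E_1$ and invoke the computation of Devinatz-Hopkins (equivalently, Hovey \cite{hovey2004} as cited):
\[ \pi_* L_{K(1)}(E_1 \sma E_1) \cong \Cts(\mb{G}_1, \pi_* E_1) = \Cts(\Z_p^\times, K_*). \]
Identifying the resulting isomorphism with $\Phi_K$ amounts to observing that the Morava stabilizer group $\mb{G}_1 = \Z_p^\times$ acts on $K$ precisely by Adams operations. Alternatively, a direct argument at height 1 proceeds via the Devinatz-Hopkins fiber sequence
\[ L_{K(1)}S \to K \xrightarrow{\psi^g - 1} K \]
(with the standard modification at $p=2$ to accommodate $\mu = \{\pm 1\}$); smashing with $K$, one uses that $K_*K$ is $L$-complete and pro-free (verifiable mod $p$ via \Cref{lem: pro-free mod p}) to present $K_*K$ as the continuous $\Z_p$-dual to the action of a topological generator of $\Z_p^\times/\mu$, which is exactly $\Cts(\Z_p^\times, K_*)$.

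The main obstacle lies in step (c): both approaches appeal to nontrivial external input, and some care is needed to confirm that the isomorphism they produce is precisely the explicit map $\Phi_K$ of the statement, rather than a different isomorphism that happens to identify the same source and target. Once the $K$ case is in hand, the $KO$ analogue follows by taking $\mu$-homotopy fixed points: using $KO = K^{h\mu}$ and the $\mu \times \mu$-equivariance of $\Phi_K$, where the two factors of $\mu$ correspond respectively to the action on $K_*$ and to translation on $\Z_p^\times$, the homotopy fixed points yield $KO_*KO \cong \Cts(\Z_p^\times/\mu, KO_*)$ together with the analogous Hopf algebroid structure.
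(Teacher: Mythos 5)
The paper itself offers no proof of this theorem — it is cited directly to \cite{hovey2004} — so there is no internal argument to compare against. Your outline is therefore an attempt to actually establish what the authors take as input, and the high-level strategy (verify continuity, verify compatibility with the structure maps, then prove bijectivity via either the Devinatz--Hopkins/Hovey computation of $\pi_* L_{K(n)}(E_n \wedge E_n)$ or a direct fiber-sequence argument) is sound and in the spirit of Hovey's treatment.

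Two points deserve more care, though. First, your claim in step (b) that both units map to the inclusion of constant functions is not accurate for $\eta_R$ in nonzero degrees: for $a \in K_{2m}$ one computes directly from the definition of $\Phi_K$ that $\Phi_K(\eta_R(a))(k) = \psi^k(a) = k^m a$, which is constant only when $m = 0$. (The paper's statement has the same issue; the identity $\eta_L = \eta_R$ holds on $\pi_0$ where $\psi^k$ acts trivially, but not on all of $K_*$ without a further twist or a choice of generator absorbing the periodicity.) In a careful proof one should either work on $\pi_0$ and extend by periodicity, or state the right-unit formula as the twisted inclusion $a \mapsto (k \mapsto \psi^k(a))$. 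Second, the passage from $K$ to $KO$ via $\mu\times\mu$-homotopy fixed points requires an argument that the $K(1)$-local smash product commutes with this finite limit; this is not automatic since the smash product does not commute with limits in general. At odd primes one can use the fact that $|\mu| = p-1$ is invertible, so $KO$ is a retract of $K$ and the claim follows by inspecting idempotents; at $p = 2$ one instead uses the standard fiber sequence defining $KO$ from $K$ and an explicit computation of $KO_*K$ and $K_*KO$ as pro-free modules. Neither is difficult, but both are additional steps beyond what you've written.

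You are right that the essential difficulty is in (c): it requires either invoking Hovey's general result (and then carefully matching his isomorphism with the explicit $\Phi_K$, which you flag), or working through the fiber sequence $L_{K(1)}S \to KO \xrightarrow{\psi^g - 1} KO$ smashed with $K$ and showing that the resulting $K_*$-module is the continuous-functions algebra with the stated structure maps. Given that the paper itself defers to the literature here, your outline is at the right level of detail, provided the two caveats above are addressed.
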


\begin{rmk}
The cooperations algebra $K_*K$ carries \textit{two} actions by Adams operations, coming from the two copies of $K$. Given $f \in K_0K$, we can represent $f$ both as a map $f:S^0 \to K \sma K$ and as an element of $\Cts(\Z_p^\times, K_0)$. Then, for $a,b\in \Z_p^\times$, we have 
\begin{equation}\label{eq:leftAdamsop}
		((\psi^a\wedge K)\circ f)(b) = f(ab)
	\end{equation}
	and
	\begin{equation}\label{eq:rightAdamsop}
	((K\wedge \psi^a)\circ f)(b) = \psi^a(f(a^{-1}b)).
	\end{equation}
\end{rmk}

Now suppose that $M_*$ is an $L$-complete $K_*K$-comodule with coaction $\psi_{M_*}$. Then there is a map
\begin{align*}
	M_* &\stackrel{\psi_{M_*}}{\to} K_*K\otimes_{K_*} M_* \\
	&\cong \Cts(\Z_p^\times, K_*)\otimes_{K_*}M_* \\
	&\cong \Hom_{\Mod^\wedge_*}(\Z_p[[\Z_p^\times]], K_*) \otimes_{K_*} M_* \\
	&\to \Hom_{\Mod^\wedge_*}(\Z_p[[\Z_p^\times]], M_*).
\end{align*}
Here $\Hom_{\Mod^\wedge_*}$ is the ordinary space of maps between $\Z_p$-modules, which is automatically $L$-complete when the modules are $L$-complete \cite[section 1.1]{HoveySS}. As $\Mod^\wedge_*$ is closed symmetric monoidal, this map is adjoint to one of the form
\begin{equation}\label{eq: Adams ops comodule}
	M_* \otimes \Z_p[[\Z_p^\times]] \to M_*.
\end{equation}
In the case where $M_*$ is $p$-complete, this defines a continuous group action by $\Z_p^\times$ on $M_*$. If $M_*$ is merely $L$-complete, then one still gets a group action by $\Z_p^\times$ on $M_*$, and the only reasonable definition of ``continuous group action'' appears to be that it extends to a map of $L$-complete modules of the form \eqref{eq: Adams ops comodule}. In either case, we call this the action by Adams operations on $M_*$. Of course, if $M_*$ is the completed $K$-theory of a spectrum $X$, $M_* = \pi_*L_{K(1)}(K \sma X)$, then this action is induced by the Adams operations on $K$.

If $M_*$ is $p$-complete then the standard relative injective resolution of $M_*$,
\[	M_* \to K_*K \otimes_{K_*} M_* \to K_*K \otimes_{K_*} K_*K \otimes_{K_*} M_* \to \dotsb,	\]
is isomorphic to the complex of continuous $\Z_p^\times$-cochains,
\[	M_* \to \Cts(\Z_p^\times, M_*) \to \Cts(\Z_p^\times \times \Z_p^\times, M_*).	\]
Thus, we can identify the relative Ext of \Cref{defn: relative ext} with continuous group cohomology:
\[	\Ext_{K_*K}(K_*, M_*) = H^*_{cts}(\Z_p^\times, M_*).	\]

Similar remarks apply to $KO$: a $KO_*KO$-comodule $M_*$ has a continuous group action by $\Z_p^\times/\mu$, and if $M_*$ is $p$-complete, we have
\[	\Ext_{KO_*KO}(KO_*, M_*) = H^*_{cts}(\Z_p^\times/\mu, M_*).	\]
(Again, if $M_*$ is merely $L$-complete, then one should instead take these Ext groups as a definition of continuous group cohomology with coefficients in $M_*$!)

One recovers the familiar $K(1)$-local Adams spectral sequences based on $KO$ as an immediate consequence. 
\begin{prop}\label{prop: K1local ASS KO}
Let $X$ be a $K(1)$-local spectrum. Then there is a strongly convergent Adams spectral sequence
\begin{align*}
	E_2 = \Ext_{KO_*KO}^{s,t}(KO_*, KO_*X) = H^s_{cts}(\Z_p^\times/\mu, KO_tX) &\Rightarrow \pi_{t-s}X.
\end{align*}
\end{prop}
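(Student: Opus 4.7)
The plan is to apply Proposition \ref{prop: K1local ASS} with $R = KO$, then identify the resulting Ext groups with continuous group cohomology, and finally upgrade from conditional to strong convergence. First I would verify the pro-freeness hypothesis on $KO_*KO$: by Theorem \ref{thm: K Hopf algebroid}, $KO_*KO \cong \Cts(\Z_p^\times/\mu, KO_*)$, and since $\Z_p^\times/\mu \cong \Z_p$ topologically, this module is pro-free over $KO_*$ — it is the $L$-completion of the free $KO_*$-module with basis given, for instance, by the Mahler basis $\{\binom{x}{n}\}_{n\ge 0}$ of $\Cts(\Z_p, \Z_p)$. Proposition \ref{prop: K1local ASS} then produces a $K(1)$-local $KO$-based Adams spectral sequence with $E_2 = \Ext_{KO_*KO}(KO_*, KO_*X)$.

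The identification of this $E_2$ term with continuous group cohomology is essentially bookkeeping and is already worked out in the paragraphs immediately preceding the proposition. Under the isomorphism $KO_*KO \cong \Cts(\Z_p^\times/\mu, KO_*)$, the standard cobar resolution of $KO_*X$ by extended comodules becomes the complex of continuous cochains on $\Z_p^\times/\mu$ with values in $KO_*X$, where the group acts via Adams operations. Applying $\Hom_{KO_*KO}(KO_*, \cdot)$ to this resolution then yields exactly the cochain complex $C^\bullet_{cts}(\Z_p^\times/\mu, KO_*X)$, whose cohomology is $H^*_{cts}(\Z_p^\times/\mu, KO_*X)$.

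For convergence I would invoke Proposition \ref{prop: K1local ASS convergence}, which requires $X$ to be $K(1)$-local $KO$-nilpotent. I would verify this by exhibiting the standard fiber sequence $L_{K(1)}S \to KO \xrightarrow{\psi^g - 1} KO$ (with $g$ as fixed in \Cref{sec: notation}), showing that $L_{K(1)}S$ is $KO$-nilpotent of length $2$; any $K(1)$-local $X \simeq X \sma L_{K(1)}S$ is then $KO$-nilpotent by closure under smashing. This gives conditional convergence, and strong convergence follows immediately from the fact that $\Z_p^\times/\mu \cong \Z_p$ has cohomological dimension $1$: the $E_2$ page is concentrated in the columns $s = 0, 1$, so no nonzero differentials are possible and the filtration on each $\pi_{t-s}X$ has length at most $2$, killing any $\lim^1$ obstruction. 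The only step needing genuine input from outside the paper is the existence of the fiber sequence above, but this is a well-known result going back to Bousfield.
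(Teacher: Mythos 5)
Your proof is correct and follows essentially the same route as the paper's: apply \Cref{prop: K1local ASS} to get the $E_2$-page, identify it with continuous group cohomology via the preceding discussion, use the fiber sequence $L_{K(1)}S \to KO \xrightarrow{\psi^g-1} KO$ to establish $KO$-nilpotence of the sphere (and hence of any $K(1)$-local $X$), and upgrade to strong convergence by noting cohomological dimension 1 forces collapse at $E_2$. You helpfully make explicit two steps the paper leaves implicit — the pro-freeness of $KO_*KO$ over $KO_*$ via the Mahler basis, and the precise fiber sequence behind "$S$ is a fiber of copies of $KO$" — but the argument is the same.
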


\begin{proof}
The calculation of the $E_2$ pages follows from the above discussion and \Cref{prop: K1local ASS}. Since $\Z_p^\times/\mu$ has cohomological dimension 1, the spectral sequence collapses at $E_2$, and in particular, converges strongly. To establish that the limit is $\pi_*X$, we must show that every $K(1)$-local $X$ is $K(1)$-local $KO$-nilpotent (see \Cref{prop: K1local ASS convergence}). But the sphere is a fiber of copies of $KO$, so $S$ is $K(1)$-local $KO$-nilpotent, so the same is true for arbitrary $X$.
\end{proof}

\section{Cones on \texorpdfstring{$\zeta$}{zeta}}\label{sec: Cone zeta}

In this section, we will analyze the cone on $\zeta$ as well as the $E_\infty$-cone $T_\zeta$ on $\zeta$. These spectra play an important role in the construction and calculation of the homotopy groups of $\tmf$, and will be important for later parts of this paper. Most of the material in this section can be found in \cite{K1localrings}.

\subsection{The spectrum cone on \texorpdfstring{$\zeta$}{zeta}}

The fiber sequence
\[	
\begin{tikzcd}
	S\arrow[r] & KO \arrow[r,"\psi^g-1"] & KO
\end{tikzcd}
\]
gives a long exact sequence on homotopy groups
\[
\begin{tikzcd}
	\cdots \arrow[r] & \pi_n S\arrow[r] & \pi_n KO \arrow[r, "\psi^g-1"] & \pi_n KO\arrow[r, "\partial"] & \pi_{n-1} S\arrow[r] & \cdots .
\end{tikzcd}
\]
Recall that the action of $\psi^g$ on $\pi_0KO$ is trivial, so the connecting homomorphism gives an isomorphism 
\[
\Z_p=\pi_0KO \cong \pi_{-1}S.
\]
This isomorphism does depend on the choice of topological generator $g$. We let $\zeta:= \partial(1)$, and we define $C(\zeta)$ to be the cone on $\zeta$, i.e. the cofibre
\[
\begin{tikzcd}
	S^{-1}\arrow[r,"\zeta"] & S\arrow[r] & C(\zeta).
\end{tikzcd}
\]
Since $\pi_{-1}KO=0$, we get a morphism of cofibre sequences
\begin{equation}\label{eq: morphism-cofibre-sequences}
\begin{tikzcd}
	S^{-1} \arrow[r,"\zeta"] & S^0\arrow[d,"="] \arrow[r]& C(\zeta)\arrow[d,"\iota"]\arrow[r,"\delta"] & S^0\arrow[d,"\eta"]\arrow[r, "\zeta"] & S^1\arrow[d,"="] \\
	&S^0\arrow[r,"\eta"] & KO\arrow[r,"\psi^g-1"] & KO \arrow[r]& S^1
\end{tikzcd}.
\end{equation}
The morphism $\iota$ is a nullhomotopy of $\eta\circ \zeta$.

Since $\zeta$ is nullhomotopic in $KO$, the top cofibre sequence in \eqref{eq: morphism-cofibre-sequences} splits after smashing with $KO$, giving $KO \sma C(\zeta) \simeq KO \sma (S^0 \vee S^0)$. In fact, there is a canonical splitting, coming from the diagram
\begin{equation}\label{eq:diagramCzeta}
\begin{tikzcd}
	KO\wedge S^0\arrow[rr] \arrow[d,"="]& & KO\wedge C(\zeta)\arrow[rr,"KO\wedge \delta"]\arrow[d, "KO\wedge \iota"] && KO\wedge S^0\arrow[d, "KO\wedge \eta"]\\\
	KO\wedge S^0\arrow[rr, "KO\wedge \eta"] \arrow[drr, "="]&& KO\wedge KO\arrow[d,"m"] \arrow[rr, "KO\wedge (\psi^g-1)"] && KO\wedge KO \\
	& & KO &&
\end{tikzcd}
\end{equation}
We see that
\[	m \circ (KO \sma \iota): KO \sma C(\zeta) \to KO	\]
splits the inclusion $KO \to KO \sma C(\zeta)$. Thus, we can choose classes $a, b \in KO_0C(\zeta)$ by
\begin{align*}
m(KO \sma \iota)(a) = 1, & & (KO \sma \delta)(a) = 0, \\
m(KO \sma \iota)(b) = 0, & & (KO \sma \delta)(b) = -1,
\end{align*}
and $\{a, b\}$ is a $KO_*$-module basis for $KO_*C(\zeta)$.

\begin{prop}\label{prop: b}
	Under the morphism 
	\[	KO\wedge\iota: KO_0C(\zeta)\to KO_0KO = \Cts(\Z_p^\times/\mu, \Z_p),	\]
	the element $a$ is mapped to the constant function 1 and $b$ is mapped to the unique group homomorphism sending $g$ to 1.
\end{prop}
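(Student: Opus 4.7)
The plan is to directly compute the functions in $\Cts(\Z_p^\times/\mu, \Z_p)$ that correspond to $(KO\wedge\iota)(a)$ and $(KO\wedge\iota)(b)$ under the identification of \Cref{thm: K Hopf algebroid}. That identification sends a class $x\colon S^0\to KO\wedge KO$ to the function $k\mapsto m\circ(KO\wedge\psi^k)\circ x$. Taking $x = (KO\wedge\iota)(y)$ for $y\in\{a,b\}$ reduces the problem to
\[	k \mapsto m\circ(KO\wedge(\psi^k\iota))\circ y,	\]
so everything comes down to understanding the composite $\psi^k\iota\colon C(\zeta)\to KO$ for $k\in\Z_p^\times/\mu$.

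The computational core of the argument is the identity $\psi^{g^n}\iota = \iota + n\cdot\eta\delta$ for all $n\ge 0$. For $n=1$ this is precisely the commutativity of the third square of \eqref{eq: morphism-cofibre-sequences}, namely $(\psi^g-1)\iota = \eta\delta$. For the inductive step, note that $\eta\colon S^0\to KO$ is the unit and $\psi^g$ is a ring map, so $\psi^g\eta = \eta$ and hence $\psi^g(\eta\delta) = \eta\delta$; thus
\[	\psi^{g^{n+1}}\iota = \psi^g(\iota + n\eta\delta) = (\iota + \eta\delta) + n\eta\delta = \iota + (n+1)\eta\delta.	\]

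Substituting into the formula for $\Phi_{KO}$ and using the defining equations $m(KO\wedge\iota)(a) = 1$, $(KO\wedge\delta)(a) = 0$, $m(KO\wedge\iota)(b) = 0$, $(KO\wedge\delta)(b) = -1$ immediately yields, for every $n\in\Z$,
\[	\Phi_{KO}((KO\wedge\iota)(a))(g^n) = m(KO\wedge\iota)(a) + n\cdot m(KO\wedge\eta)(KO\wedge\delta)(a) = 1 + n\cdot 0 = 1,	\]
and, after a short sign chase based on the normalization $(KO\wedge\delta)(b)=-1$ (which is chosen precisely so the final answer has no sign),
\[	\Phi_{KO}((KO\wedge\iota)(b))(g^n) = n.	\]
Since $\Phi_{KO}((KO\wedge\iota)(y))$ is automatically a continuous function on $\Z_p^\times/\mu$ and $\{g^n : n\in\Z\}$ is dense in $\Z_p^\times/\mu$, these values determine the functions globally: $a$ is mapped to the constant $1$, and $b$ is mapped to the unique continuous group homomorphism sending $g$ to $1$.

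The proof is essentially a matter of unwinding definitions. The two places requiring real care are the inductive identity for $\psi^{g^n}\iota$ (which rests on $(\psi^g-1)\iota=\eta\delta$ together with $\psi^g\eta=\eta$) and the sign bookkeeping in the final evaluation; the mild use of continuity plus density of $\{g^n\}$ sidesteps the need to construct $\psi^{g^t}\iota$ for $t\in\Z_p\setminus\Z$ directly.
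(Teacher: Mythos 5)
Your approach is essentially the same as the paper's (both proofs rest on the commutativity of the middle square of \eqref{eq: morphism-cofibre-sequences} and continuity/density of the powers of $g$), but the ``short sign chase'' you allude to does not come out the way you claim, and the discrepancy is worth understanding.

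Running the computation you set up literally, with the formula for $\Phi_{KO}$ as written in \Cref{thm: K Hopf algebroid} (namely $\Phi_{KO}(x)(k) = m\circ(KO\wedge\psi^k)\circ x$), one gets
\[
	\Phi_{KO}\bigl((KO\wedge\iota)(b)\bigr)(g^n) = m(KO\wedge\iota)(b) + n\cdot m(KO\wedge\eta)(KO\wedge\delta)(b) = 0 + n\cdot(-1) = -n,
\]
which is the \emph{opposite} sign to what the proposition asserts. The issue is not in your identity $\psi^{g^n}\iota = \iota + n\,\eta\delta$ (which is correct, and extends to $n<0$ by the usual argument), but in the fact that the displayed definition of $\Phi_K$ in the paper is inconsistent with the formulas \eqref{eq:leftAdamsop} and \eqref{eq:rightAdamsop}: for those to hold one must take $\Phi_K(x)(k) = m\circ(\psi^k\wedge K)\circ x$, with the Adams operation applied to the \emph{left} smash factor. (You can check this, for instance, on $\eta_R(u) = 1\wedge u$: with the written convention, $(\psi^a\wedge K)(\eta_R(u)) = \eta_R(u)$, while \eqref{eq:leftAdamsop} would predict its image under $\Phi_K$ is $\Phi_K(\eta_R(u))(a\cdot)$, and these disagree.) The paper's own proof of this proposition sidesteps the issue by working directly with \eqref{eq:rightAdamsop}, i.e., the functional equation $\ol b(g^{-1}n) - \ol b(n) = -1$, rather than the displayed definition of $\Phi_{KO}$, and so lands on $\ol b(g^n) = n$.

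So: to make your proof correct, either replace ``the formula for $\Phi_{KO}$'' with the corrected convention $\Phi_{KO}(x)(k) = m\circ(\psi^k\wedge KO)\circ x$ and redo the bookkeeping (this introduces a $\psi^{g^{-n}}$, and one finds $\Phi_{KO}(\ol b)(g^n) = m(KO\wedge\psi^{g^{-n}}\iota)(b) = +n$), or, as the paper does, work entirely with the functional relation furnished by \eqref{eq:rightAdamsop} and the commuting square. As written, your proof proves that $\ol b$ is the homomorphism sending $g$ to $-1$, not $+1$.
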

\begin{proof}
We use the formulas from \Cref{thm: K Hopf algebroid} and \eqref{eq:rightAdamsop}. For the sake of brevity, let $\ol{a}$ and $\ol{b}$ be the images of $a$ and $b$ under $KO \sma \iota$, which we think of as continuous functions from the topologically cyclic group $\Z_p^\times/\mu$ to $\Z_p$. Since $m(\ol{a}) = 1$, by \Cref{thm: K Hopf algebroid}, the function $\ol{a}$ satisfies $\ol{a}(1) = 1$.
We also have
\[	(KO \sma (\psi^g - 1))(\ol{a}) = (KO \sma \eta)(KO \sma \delta)(a) = 0,	\]
and by \eqref{eq:rightAdamsop}, together with the fact that $\psi^g$ acts trivially on $KO_0$,
\[	\ol{a}(g^{-1}n) - \ol{a}(n) = 0	\]
for any $n \in \Z_p^\times/\mu$. Together with continuity of $\ol{a}$, this implies that $\ol{a}$ is constant.

Applying the same arguments to $\ol{b}$, we obtain
\[	\ol{b}(1) = 0, \quad \ol{b}(g^{-1}n) - \ol{b}(n) = -1.	\]
It follows that
\[	\ol{b}(g^k) = k	\]
for any $k \in \Z$, and by continuity, for any $k \in \Z_p$.
\end{proof}

\begin{cor}
The map
\[	\iota_*: KO_0C(\zeta) \to KO_0KO	\]
is injective.
\end{cor}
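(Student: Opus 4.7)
The plan is to exploit the explicit basis $\{a,b\}$ of $KO_0 C(\zeta)$ over $KO_0 = \Z_p$ together with the computation in Proposition~\ref{prop: b}. Since $KO_0C(\zeta) \cong \Z_p\{a\} \oplus \Z_p\{b\}$, any element of $KO_0C(\zeta)$ can be uniquely written as $ra+sb$ with $r,s\in\Z_p$, and proving injectivity of $\iota_*$ amounts to showing that the images $\iota_*(a)$ and $\iota_*(b)$ are $\Z_p$-linearly independent in $\Cts(\Z_p^\times/\mu, \Z_p)$.

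By Proposition~\ref{prop: b}, $\iota_*(a)$ is the constant function $1$ and $\iota_*(b)$ is the group homomorphism $\phi$ satisfying $\phi(g) = 1$ (so in particular $\phi(1)=0$). Suppose $r \cdot 1 + s\cdot \phi = 0$ as a continuous function on $\Z_p^\times/\mu$. Evaluating at the identity element $1$ gives $r + s\phi(1) = r = 0$, and then evaluating at $g$ gives $s\phi(g)=s=0$. Hence $\iota_*(a)$ and $\iota_*(b)$ are linearly independent over $\Z_p$, and $\iota_*$ is injective.

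There is no real obstacle; the entire argument is a two-line evaluation, and the work was already done in Proposition~\ref{prop: b}. The only conceptual point worth noting is that this injectivity is a strictly $K(1)$-local, $\theta$-algebra-flavored phenomenon: the homomorphism $\phi$ is not a polynomial in the constants, so it is genuinely a new generator of $KO_0 KO$ beyond the image of $KO_0$, which is precisely what makes the splitting in diagram \eqref{eq:diagramCzeta} give a faithful description of $KO_* C(\zeta)$ inside the cooperations algebra.
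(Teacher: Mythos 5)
Your proof is correct and takes exactly the route the paper does: the paper's proof simply observes that $\ol{a}$ and $\ol{b}$ are linearly independent in $KO_0KO$, and your evaluation at $1$ and at $g$ is precisely the verification of that observation. (Your closing remark about the ``$\theta$-algebra-flavored phenomenon'' is a bit of a tangent — the linear independence used here is elementary and not really about power operations — but it does not affect the correctness of the argument.)
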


\begin{proof}
One just has to observe that the functions $\ol{a}$, $\ol{b}$ are linearly independent in $KO_0KO$.
\end{proof}

\begin{cor}\label{cor: b Adams}
	In $KO_*C(\zeta)$, the Adams operations fix $a$ and $\psi^g(b) = b+a$.
\end{cor}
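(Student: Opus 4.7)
The plan is to leverage the injectivity of $\iota_*: KO_0C(\zeta) \to KO_0KO$ established in the previous corollary, together with the explicit descriptions of $\iota_*(a) = \ol{a}$ (the constant function $1$) and $\iota_*(b) = \ol{b}$ (the unique continuous group homomorphism with $\ol{b}(g) = 1$) from \Cref{prop: b}. Since $\iota_*$ is injective, the identities $\psi^g(a) = a$ and $\psi^g(b) = b + a$ reduce to verifying the corresponding statements for $\ol{a}$ and $\ol{b}$ inside $\Cts(\Z_p^\times/\mu, \Z_p)$.

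Next, I would observe that the Adams operation $\psi^g$ on $KO_*C(\zeta)$ is by definition induced by smashing $\psi^g$ with $C(\zeta)$ on the \emph{left} $KO$ factor, while $\iota$ lives entirely on the $C(\zeta)$ factor. Hence $\iota_*$ intertwines $\psi^g$ on $KO_*C(\zeta)$ with the left Adams operation $(\psi^g \wedge KO)_*$ on $KO_*KO$. Under the identification $KO_*KO \cong \Cts(\Z_p^\times/\mu, KO_*)$ from \Cref{thm: K Hopf algebroid}, this left operation is computed by formula \eqref{eq:leftAdamsop}: a function $f$ is sent to $n \mapsto f(gn)$.

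Applying this to $\ol{a}$: since $\ol{a}$ is the constant function $1$, we have $(\psi^g \cdot \ol{a})(n) = \ol{a}(gn) = 1$, so $\psi^g(\ol{a}) = \ol{a}$. Applying it to $\ol{b}$: on elements of the form $n = g^k$ for $k \in \Z$, we have $(\psi^g \cdot \ol{b})(g^k) = \ol{b}(g^{k+1}) = k+1 = \ol{b}(g^k) + \ol{a}(g^k)$, and by continuity this identity propagates to all $n \in \Z_p^\times/\mu$. Thus $\psi^g(\ol{b}) = \ol{b} + \ol{a}$, and injectivity of $\iota_*$ then yields $\psi^g(b) = b + a$.

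The argument is essentially bookkeeping once \Cref{prop: b} is in hand; there is no real obstacle, as the hard work has already been done in establishing the explicit formulas for $\ol{a}$ and $\ol{b}$ and the injectivity of $\iota_*$.
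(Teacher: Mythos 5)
Your argument is exactly the one the paper intends: use the injectivity of $\iota_*$ from the previous corollary to reduce to a computation in $KO_0KO$, then apply the explicit formulas for $\ol{a}$, $\ol{b}$ from \Cref{prop: b} together with the left Adams operation formula \eqref{eq:leftAdamsop}. You have simply spelled out the verification that the paper compresses into a single sentence.
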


\begin{proof}
By the previous corollary, the Adams operations can be calculated in $KO_0KO$, where they are given by \eqref{eq:leftAdamsop}.
\end{proof}

\begin{cor}
We have
\[	K_*C(\zeta) \cong K_*\{a, b\},	\]
where the Adams operations fix $a$ and satisfy
\[	\psi^g(b) = b + a, \quad \psi^{\omega}(b) = b.	\]
\end{cor}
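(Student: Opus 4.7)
The plan is to deduce this corollary from the $KO$-analogue that has just been established, via base change from $KO_*$ to $K_*$. Indeed, the previous arguments have shown that $KO_*C(\zeta)$ is a free $KO_*$-module on $a$ and $b$, and that the Adams operations act on these generators in the stated manner. Since $K$ is a $K(1)$-local $KO$-module, we have a natural equivalence $K \sma C(\zeta) \simeq K \sma_{KO} (KO \sma C(\zeta))$, so the problem reduces to understanding what happens after extending scalars.

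First I would establish the module structure. The class $a$ comes from the unit map $S^0 \to C(\zeta)$, so it lies in the image of $\pi_0 \to KO_0 C(\zeta) \to K_0 C(\zeta)$; the class $b$ is similarly defined through the $KO$-module structure of $C(\zeta)$ and lives in $KO_0 C(\zeta) \subset K_0 C(\zeta)$. Since $KO_*C(\zeta)$ is free (hence pro-free) over $KO_*$, \Cref{prop: pro-free tensor} applied to the map $KO \to K$ of $K(1)$-local homotopy commutative ring spectra gives
\[
K_*C(\zeta) \;\cong\; K_* \otimes_{KO_*} KO_*C(\zeta) \;\cong\; K_*\{a,b\}.
\]
Equivalently, from the splitting $KO \sma C(\zeta) \simeq KO \vee KO$ exhibited in \eqref{eq:diagramCzeta} via the classes $a,b$, smashing with $K$ over $KO$ yields $K \sma C(\zeta) \simeq K \vee K$, with the wedge summands generated by the images of $a$ and $b$.

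For the action of the Adams operations, note that for any $k \in \Z_p^\times$, the operation $\psi^k$ on $K$ restricts to $\psi^k$ on $KO = K^{h\mu}$ (trivially so when $k \in \mu$), and the natural map $KO \sma C(\zeta) \to K \sma C(\zeta)$ is $\Z_p^\times$-equivariant. Consequently, the formulas $\psi^g(a) = a$ and $\psi^g(b) = b + a$ established in \Cref{cor: b Adams} in $KO_*C(\zeta)$ persist in $K_*C(\zeta)$. For $\omega \in \mu$, the subgroup $\mu$ fixes $KO$, hence fixes the subset $KO_0 C(\zeta) \subset K_0 C(\zeta)$ containing both $a$ and $b$, yielding $\psi^\omega(a) = a$ and $\psi^\omega(b) = b$.

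There is no serious obstacle in this argument; the only point requiring care is the verification that the base-change equivalence $K \sma C(\zeta) \simeq K \sma_{KO}(KO \sma C(\zeta))$ transports the $KO$-module generators $a,b$ to $K$-module generators with the same group-theoretic properties. This follows formally from the $\Z_p^\times$-equivariance of the map $KO \to K$ together with the pro-free K\"unneth formula of \Cref{prop: pro-free tensor}.
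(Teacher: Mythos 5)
Your argument is essentially the same as the paper's: base change from $KO$ to $K$ gives freeness on the same generators $a, b$, and since these lift to $KO_*C(\zeta) = (K^{h\mu})_*C(\zeta)$, they are fixed by $\psi^\omega$, while the $\psi^g$-action is transported along the $\Z_p^\times$-equivariant map $KO\sma C(\zeta)\to K\sma C(\zeta)$. The paper states this more tersely but the content is identical.
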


\begin{proof}
The $KO$-module $KO \sma C(\zeta)$ is free on the generators $\{a,b\}$, so $K \sma C(\zeta)$ is free on the same generators as a $K$-module. Since the generators of $K_*C(\zeta)$ are in the image of $KO_*C(\zeta)$, they are fixed by $\psi^{\omega}$.
\end{proof}

\subsection{The \texorpdfstring{$E_\infty$}{E infinity}-cone on \texorpdfstring{$\zeta$}{zeta}}

The previous subsection allows us to start the analysis of the $E_\infty$-cone on $\zeta$. 

\begin{defn}
	The spectrum $T_\zeta$ is defined by the following homotopy pushout square in the category $\CAlg$. 
	\begin{equation}\label{eq: pushout Tzeta}
	\xymatrix{ \PP(S^{-1}) \ar[r]^-0 \ar[d]_\zeta & S^0 \ar[d] \\ S^{0} \ar[r] & \pushoutcorner T_\zeta }	
	\end{equation}
\end{defn}

Just as $C(\zeta)$ classifies nullhomotopies of $\zeta$ in spectra equipped with a map from $S^0$, $T_\zeta$ classifies nullhomotopies of $\zeta$ in $E_\infty$-algebras. That is, there is a natural equivalence of mapping spaces
\[	 \CAlg(T_\zeta,R) \simeq \Sp_{S^0/}(C(\zeta),R).	\]
In particular, there is a canonical morphism $C(\zeta)\to T_\zeta$, and a canonical factorization
\begin{equation}\label{eq: factorization iota}
\begin{tikzcd}
	C(\zeta)\arrow[r] \arrow[rr, bend right, swap,"\iota"]& T_\zeta\arrow[r] & KO.
\end{tikzcd}
\end{equation}
We also have the following. 

\begin{prop}\label{prop: homology of cone on zeta}
Let $R$ be any $E_\infty$-algebra such that $\pi_{-1}R=0$. Then there is an equivalence in $\CAlg_{R}$:
\[
R\wedge T_\zeta\simeq R\wedge \PP(S^0).
\]
\end{prop}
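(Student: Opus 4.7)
The plan is to smash the defining pushout square \eqref{eq: pushout Tzeta} of $T_\zeta$ with $R$, and to observe that the hypothesis $\pi_{-1}R = 0$ forces the resulting pushout diagram in $\CAlg_R$ to have both legs null, which is precisely the pushout computing $\PP_R(S^0) \simeq R\wedge \PP(S^0)$.

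First I would note that the base-change functor $R\wedge (-)\colon \CAlg \to \CAlg_R$ is a left adjoint, so it preserves pushouts. Applying it to \eqref{eq: pushout Tzeta} and using the natural identification $R\wedge \PP(X) \simeq \PP_R(X)$, I obtain a pushout square in $\CAlg_R$
\[
\xymatrix{ \PP_R(S^{-1}) \ar[r]^-{0} \ar[d]_{R\wedge \zeta} & R \ar[d] \\ R \ar[r] & R\wedge T_\zeta.}
\]

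The key step is to show that $R\wedge \zeta$ is null, i.e.\ equal to the augmentation $\PP_R(S^{-1})\to R$ in $\CAlg_R$. By the free–forgetful adjunction
\[
\CAlg_R(\PP_R(S^{-1}),R) \simeq \Map_{\Mod_R}(S^{-1},R) \simeq \Omega^\infty \Sigma R,
\]
the map $R\wedge \zeta$ is classified by the $R$-module map $S^{-1}\xrightarrow{\zeta} S^0 \to R$, an element of $\pi_{-1}R$. By hypothesis this group is zero, so $R\wedge \zeta$ coincides with the augmentation up to homotopy. This is the only substantive point in the argument; the rest is formal.

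Finally, to identify the resulting pushout, I would apply the colimit-preserving functor $\PP_R\colon \Mod_R\to \CAlg_R$ to the $R$-module pushout square expressing $S^0$ as the cofiber of $0\colon S^{-1}\to 0$ (taken twice), obtaining a pushout of $E_\infty$ $R$-algebras
\[
\xymatrix{ \PP_R(S^{-1}) \ar[r]^-{0} \ar[d]_{0} & R \ar[d] \\ R \ar[r] & \PP_R(S^0),}
\]
with both legs the augmentation (since $\PP_R(0) = R$). Comparing this with the square above, the universal property of the pushout produces the desired equivalence
\[
R\wedge T_\zeta \simeq \PP_R(S^0) \simeq R\wedge \PP(S^0)
\]
in $\CAlg_R$.
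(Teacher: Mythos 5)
Your proof is correct and follows the same route as the paper: smash $R$ into the defining pushout square for $T_\zeta$, use the free--forgetful adjunction to identify $R\wedge\zeta$ with a point of $\Omega^\infty\Sigma R$ (hence null since $\pi_{-1}R=0$), and conclude that the pushout of two null maps out of $\PP_R(\Sigma^{-1}R)$ is $\PP_R(R)\simeq R\wedge\PP(S^0)$. Your final paragraph just makes explicit the colimit identification that the paper asserts directly.
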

\begin{proof}
Smashing $R$ with the pushout diagram for $T_\zeta$ produces a pushout diagram
\[
\begin{tikzcd}
R\wedge \PP(S^{-1})\arrow[r,"*"]\arrow[d,"R\wedge\zeta"] & R\wedge S^0\arrow[d]\\
R\wedge S^0\arrow[r] & R\wedge T_\zeta
\end{tikzcd}
\]
Observe the equivalence $\PP_R(R\wedge S^0)\simeq R\wedge \PP(S^0)$. Note that $R\wedge \zeta$ is adjoint to the map 
\[
R\wedge \zeta: R\wedge S^{-1}\to R\wedge S^0
\]
in $R$-modules. This morphism is itself adjoint to the map 
\[
\zeta: S^{-1}\to S^0\to R\wedge S^0
\]
in $S$-modules. As $\pi_{-1}R=0$, this map is null, which implies $R\wedge \zeta$ is null in $R$-modules. Thus the morphism $R\wedge\zeta$ in $\CAlg_{R}$ is adjoint to the null morphism. So the pushout diagram is in fact the pushout of the following
\[
\begin{tikzcd}
\PP_R(R\wedge S^{-1})\arrow[r,"0"] \arrow[d,"0"]& R\\
R &
\end{tikzcd}
\]
which gives $R\wedge \PP(S^0)$. 
\end{proof}

\begin{cor}
	There is an equivalence of $KO$-algebras
	\[
	KO\wedge T_\zeta\simeq KO\wedge \PP(S^0).
	\]
\end{cor}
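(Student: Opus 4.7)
The corollary is an immediate application of the preceding proposition to the particular $E_\infty$-algebra $R = KO$. The only thing to check is the hypothesis that $\pi_{-1}KO = 0$.

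My plan is to invoke \Cref{prop: homology of cone on zeta} with $R = KO$. Since the proposition produces an equivalence in $\CAlg_R$, this will yield the claimed equivalence of $KO$-algebras as soon as we verify the vanishing of $\pi_{-1}KO$. This vanishing is standard: at $p=2$, the ($2$-complete) $KO_*$ is concentrated in non-negative degrees modulo the periodicity generator $\beta$ of degree $8$, and none of the $KO_*$-generators sit in degree $-1$; at odd primes $p \le 5$, $KO$ is the $p$-complete Adams summand with $\pi_*KO = \Z_p[v_1^{\pm 1}]$ where $|v_1| = 2p-2$, so again $\pi_{-1}KO = 0$.

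I expect no real obstacle here: the content of the result is entirely contained in the proposition, and the corollary is just a special case. The only care needed is to remember that all smash products and free $E_\infty$-algebras are implicitly taken in the $K(1)$-local category, per the conventions of \Cref{sec: notation}, so that $KO \wedge \PP(S^0)$ denotes the $K(1)$-local free $KO$-algebra on a class in degree zero.
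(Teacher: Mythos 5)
Your proof is correct and matches the paper's intent: the corollary is just \Cref{prop: homology of cone on zeta} applied to $R = KO$, with the only hypothesis being $\pi_{-1}KO = 0$, which holds since $\pi_{-1}KO \cong \pi_7 KO = 0$ at $p=2$ and $\pi_*KO = \Z_p[v_1^{\pm1}]$ with $|v_1| = 2(p-1)$ at odd primes.
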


More explicitly, we can choose this equivalence so that the following diagram commutes:
\[
\begin{tikzcd}\label{eq: cone and Einfty cone}
KO \wedge C(\zeta) \arrow[r, "a \vee b"] \arrow[d] & KO \wedge (S^0 \vee S^0) \arrow[d] \\
KO \wedge T_\zeta \arrow[r] & KO \wedge \PP(S^0)
\end{tikzcd}
\]
Here, the map
\[	S^0 \vee S^0 \to \PP(S^0)	\]
is the unit on the left summand, and the inclusion of the generator on the right one. This allows us to calculate the $KO$-homology of $T_\zeta$ completely.

\begin{cor}
As a $\theta$-algebra over $KO_*$,
\[	KO_*T_\zeta \cong KO_* \otimes \TT(b), \text{ with }\psi^g(b) = b+1,	\]
where $b$ is the image of the element of $KO_0C(\zeta)$ described in \Cref{prop: b}. Likewise,
\[	K_*T_\zeta \cong K_* \otimes \TT(b), \text{ with }\psi^g(b) = b+1, \,\, \psi^{\omega}(b) = b.	\]
\end{cor}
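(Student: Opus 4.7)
The plan is to leverage the equivalence $KO\wedge T_\zeta\simeq KO\wedge \PP(S^0)$ from the preceding corollary together with the fact that $\PP(S^0)$ is the free $K(1)$-local $E_\infty$-algebra on a degree-zero generator. First, $KO\wedge \PP(S^0)$ is the free $K(1)$-local $E_\infty$ $KO$-algebra on one degree-zero generator, so by Bousfield's free $\theta$-algebra theorem (cited in \Cref{sec: notation} and discussed in the appendix), its $\pi_0$ is the free $\theta$-algebra $\TT(b)$ over $KO_0$. Extending to all degrees by tensoring with the (torsion-free) ring $KO_*$ gives $\pi_*(KO\wedge \PP(S^0))\cong KO_*\otimes \TT(b)$ as a $\theta$-algebra over $KO_*$.

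Next, I would identify this $b$ with the image of the class $b\in KO_0 C(\zeta)$ from \Cref{prop: b}. This falls out of the explicit commutative diagram displayed just above the corollary: the right-hand vertical map sends the $b$-summand of $S^0\vee S^0$ to the canonical generator of $\PP(S^0)$, while the left vertical is the canonical map $C(\zeta)\to T_\zeta$ smashed with $KO$.

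With the identification in hand, computing the Adams action reduces to transporting $\psi^g(b)=b+a$ from \Cref{cor: b Adams} forward along $C(\zeta)\to T_\zeta$. The same diagram shows that the $a$-summand of $S^0\vee S^0$ maps to the unit $S^0\to \PP(S^0)$, so $a$ is sent to $1\in KO_0 T_\zeta$, yielding $\psi^g(b)=b+1$.

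For the $K$-version: \Cref{prop: homology of cone on zeta} applies to $K$ since $\pi_{-1}K=0$, and the argument above runs identically to produce $K_*T_\zeta\cong K_*\otimes \TT(b)$ with $\psi^g(b)=b+1$. The relation $\psi^\omega(b)=b$ follows because $b$ lifts through the unit map $KO_*T_\zeta\to K_*T_\zeta$, whose image lands in the $\mu$-invariants of $K_*T_\zeta$ since $KO=K^{h\mu}$. The most delicate step is the first one --- making the identification $\pi_0(KO\wedge\PP(S^0))\cong KO_0\otimes\TT(b)$ as a full $\theta$-algebra rather than merely as a ring --- but this is precisely what the appendix on $\theta$-algebras is set up to provide.
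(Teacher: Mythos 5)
Your proof takes essentially the same route as the paper's: the equivalence $KO\wedge T_\zeta\simeq KO\wedge\PP(S^0)$, identification of the $\theta$-algebra structure of the target, tracking the generator through the diagram relating $C(\zeta)$ to $T_\zeta$, computing $\psi^g(b)$ via $a\mapsto 1$, and finally base-changing to $K$.

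One correction worth flagging: the step that identifies $\pi_0(KO\wedge\PP(S^0))$ with the free $\theta$-algebra is not Bousfield's theorem (\Cref{thm: free theta-algebra}), which is a purely algebraic result describing the underlying ring of $\TT(x)$. What is needed is McClure's theorem (\Cref{thm: McClure}, cited from \cite{BarthelFrankland}), the nontrivial homotopy-theoretic input that the natural map $\TT(K_*X)\to K_*\PP(X)$ is an isomorphism when $K_*X$ is flat. This is exactly the citation the paper uses, and without it (or a $KO$-analogue) your first paragraph is an assertion, not an argument. The rest of your reasoning — using the displayed diagram to track $a\mapsto 1$ and $b\mapsto b$, pulling in $\psi^g(b)=b+a$ from \Cref{cor: b Adams}, and deducing $\psi^\omega(b)=b$ from the fact that $b$ lifts to $KO$-homology — is exactly as in the paper.
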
 
\begin{proof}
	This is a consequence of the $E_\infty$ equivalence $KO \sma T_\zeta \simeq KO \sma \PP(S^0)$, McClure's theorem \ref{thm: McClure}, and the commutativity of \eqref{eq: cone and Einfty cone}. Since $b$ is in the image of $KO_0C(\zeta)$, its Adams operations follow from \Cref{cor: b Adams}. As the Adams operations on $KO_*$ are known and $\psi^g$ commutes with $\theta$, the calculation of $\psi^g(b)$ determines the Adams operations on all of $KO_*T_\zeta = KO_* \otimes \TT(b)$. Tensoring up to $K$, one also gets the formula for $K_*T_\zeta$.
\end{proof}

\subsection{The homotopy groups of \texorpdfstring{$T_\zeta$}{Tzeta}}

In this subsection we compute the homotopy groups of $T_\zeta$. This has been done before in \cite{K1localrings} and \cite{Laures2004}. As this calculation is important for the work on co-operations to follow, we review it here in detail.

We may approach the homotopy groups of $T_\zeta$ using the $KO$-based Adams spectral sequence, which we saw in \Cref{prop: K1local ASS KO} takes the form
\begin{equation}\label{eq: HFPSS for Tzeta}
E_2 = \Ext_{KO_*KO}(KO_*, KO_*T_\zeta) = H^*_{cts}(\Z_p^\times/\mu; KO_*T_\zeta)\implies \pi_*T_\zeta.
\end{equation}
The key point of Hopkins' calculation in \cite{K1localrings} is as follows:
\begin{thm}[{\cite{K1localrings}, \cite{Laures2004}}]\label{thm: KO homology of Tzeta}
	The $KO$-homology of $T_\zeta$ is an extended $KO_*KO$-comodule. More specifically, there is an isomorphism of $KO_*KO$-comodules
	\[
	KO_*T_\zeta \cong KO_*KO \otimes \TT(f) \cong \Cts(\Z_p^\times/\mu, \Z_p) \otimes KO_* \otimes \TT(f),
	\]
	where $f = \psi^p(b)-b$, and $\TT(f)$ has trivial coaction.
\end{thm}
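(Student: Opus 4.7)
The plan is to identify $f = \psi^p(b) - b$ as a primitive element of $KO_*T_\zeta$ under the $KO_*KO$-coaction, then to exhibit an isomorphism with the extended comodule on the $\theta$-subalgebra it generates.

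First, I compute the coaction on the generator $b$. Under the identification $KO_*KO \cong \Cts(\Z_p^\times/\mu, KO_*)$, the coaction $\psi: KO_*T_\zeta \to KO_*KO \otimes_{KO_*} KO_*T_\zeta$ sends $x$ to the function $k \mapsto \psi^k(x)$. Combining $\psi^g(b) = b+1$ (from the preceding corollary) with the group law $\psi^a\psi^b = \psi^{ab}$ and continuity, we get $\psi^k(b) = b + \alpha(k)$, where $\alpha: \Z_p^\times/\mu \xrightarrow{\sim} \Z_p$ is the logarithm sending $g \mapsto 1$. Equivalently, $\psi(b) = 1 \otimes b + \alpha \otimes 1$, with $\alpha \in KO_0KO$ the log element.

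Primitivity of $f$ is then a direct calculation: since $\psi^p$ commutes with every $\psi^k$ and fixes $\Z_p = KO_0$,
\[
\psi^k(f) = \psi^p(b + \alpha(k)) - (b + \alpha(k)) = \psi^p(b) - b = f.
\]
Because $\theta = (\psi^p - (-)^p)/p$ is built from Adams operations on any $p$-torsion-free $\theta$-algebra, $\theta$ commutes with every $\psi^k$ as well, so each $\theta^n(f)$ is also primitive. The inclusion $KO_* \otimes \TT(f) \hookrightarrow KO_*T_\zeta$ therefore lands in the primitives, and this extends to a $KO_*KO$-comodule map
\[
\phi: KO_*KO \otimes_{KO_*} (KO_* \otimes \TT(f)) \to KO_*T_\zeta,
\]
where the source carries the extended coaction $\Delta \otimes 1_{\TT(f)}$. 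Concretely $\phi$ is specified on a topological $KO_*$-basis $\{\binom{\alpha}{n} \otimes x\}$ of the source, with compatibility of coactions forced by $\Delta(\alpha) = \alpha \otimes 1 + 1 \otimes \alpha$ via the Vandermonde identity.

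The main obstacle is showing $\phi$ is an isomorphism. My plan is a mod-$p$ argument: both sides are pro-free over $KO_*$ (and hence $p$-torsion-free), so by \Cref{lem: pro-free mod p} it suffices to verify that $\phi$ reduces to an iso modulo $p$. After reduction $\psi^p$ becomes the Frobenius, so $f \equiv b^p - b \pmod p$, and both sides become explicit graded $(KO_*/p)$-algebras. The delicate step is matching the Mahler-type basis of $KO_*KO/p$ with the monomial basis $\{b^{i_0}\theta(b)^{i_1}\cdots\}$ of $KO_*T_\zeta/p$; I expect to handle this by an induction on $\theta$-length, using the Fermat-quotient identities $p\cdot\theta(x) = \psi^p(x) - x^p$ to relate Mahler-basis elements to expressions in $b, \theta(b), \dots$ and vice versa.
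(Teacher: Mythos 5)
Your opening steps are correct: the coaction on $b$ is $\psi(b) = 1\otimes b + \alpha\otimes 1$ with $\alpha$ the log element, $f$ is primitive, and primitivity propagates to $\theta^n(f)$. However, the construction of $\phi$ has a genuine gap. An inclusion of a subcomodule of primitives does \emph{not} canonically extend to a comodule map from the extended comodule; you also need a coalgebra section $s: KO_0KO \to \TT(b)$ and then take $\phi = m\circ(i\otimes s)$. Your implicit choice $\binom{\alpha}{n}\mapsto \binom{b}{n}$ does not define such an $s$, because $\binom{b}{n}=\tfrac{b(b-1)\dotsm(b-n+1)}{n!}$ is \emph{not} an element of $\TT(b)$: it lives in $\TT(b)\otimes\Q$ (e.g.\ at $p=2$, $\binom{b}{2} = (b^2-b)/2\notin \Z_2[b]^\wedge_2$). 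The paper handles this with the ``leaky'' $\lambda$-ring structure on $\TT(b)$ (from Bousfield's \Cref{thm: Bousfield lambda rings}): one sets $s(\beta_n)=\lambda^n(b)$, which is $p$-integral and satisfies $\pi(\lambda^n(b)) = \beta_n$, and one must then check (\Cref{lem: coproduct lambda algebras}) that $s$ is a coalgebra map. This is the content of the paper's \Cref{s}, and it is not optional --- without the $\lambda$-ring technology the map is not even defined.

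Your mod-$p$ approach to the isomorphism is a genuinely different route from the paper's, which uses Milnor--Moore-style filtration arguments (\Cref{T(b) induced}). A mod-$p$ reduction argument is plausible in principle, since both sides are pro-free over $\Z_p$ and the complete Nakayama lemma applies. But the ``delicate step'' you identify is the whole difficulty: one would need to understand $\lambda^n(b)$ modulo $p$ explicitly, which is no easier than the original question (indeed, the identity $f_i \equiv b_i^p - b_i$ is easy, but the corresponding control of $s(\beta_k)$ mod $p$ is not, precisely because $\lambda^n$ does not commute with reduction mod $p$). The paper's filtration argument avoids this by working with the uncompleted, graded objects and only using the coalgebra property of $s$, not its explicit form. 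Finally, your proposal stops at $KO_0$: the statement is about $KO_*$ in all degrees, and extending the degree-$0$ result requires constructing $\Z_p^\times/\mu$-invariant elements in nonzero degree (the paper's $\widetilde{v}_1 = g^{-b(p-1)}v_1$, with separate work at $p=2$ for the $\eta$-torsion). This step is absent from your outline.
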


This allows an immediate derivation of $\pi_*T_\zeta$.

\begin{cor}
	The homotopy groups of $T_\zeta$ are
	\[	
	\pi_*T_\zeta \cong KO_* \otimes \TT(f).
	\]
\end{cor}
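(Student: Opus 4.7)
The strategy is to run the $K(1)$-local $KO$-based Adams spectral sequence of Proposition \ref{prop: K1local ASS KO}, which takes the form
\[
E_2^{s,t} = \Ext^{s,t}_{KO_*KO}(KO_*, KO_*T_\zeta) \Rightarrow \pi_{t-s}T_\zeta,
\]
and exploit the fact that the real work has already been done in Theorem \ref{thm: KO homology of Tzeta}, which identifies $KO_*T_\zeta$ as an extended comodule. Given that input, the corollary should be a purely formal consequence, so no serious obstacle is expected.

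Concretely, Theorem \ref{thm: KO homology of Tzeta} exhibits $KO_*T_\zeta \cong KO_*KO \otimes_{KO_*} N_*$ as a $KO_*KO$-comodule, where $N_* = KO_* \otimes \TT(f)$ carries trivial coaction; this is precisely an extended comodule in the sense of Definition \ref{defn: extended comodule}. By Proposition \ref{prop: relative ext facts}(d), every extended comodule is relatively injective (it is its own relative injective resolution), so
\[
\Ext^s_{KO_*KO}(KO_*, KO_*T_\zeta) = 0 \qquad \text{for } s > 0.
\]
In filtration zero, the extended/forgetful adjunction underlying Proposition \ref{prop: relative ext facts}(e) gives
\[
\Ext^0_{KO_*KO}(KO_*, KO_*KO \otimes_{KO_*} N_*) \cong \Hom_{\Mod^\wedge_{KO_*}}(KO_*, N_*) \cong N_* = KO_* \otimes \TT(f).
\]
Thus the $E_2$-page is concentrated on the zero line and equals $KO_* \otimes \TT(f)$.

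For convergence, I would appeal to the argument inside Proposition \ref{prop: K1local ASS KO}: every $K(1)$-local spectrum is $K(1)$-local $KO$-nilpotent (since the sphere is built from copies of $KO$ via the $\zeta$-fiber sequence), so the spectral sequence converges strongly to $\pi_*T_\zeta$. With $E_2 = E_\infty$ supported in filtration zero, no differentials are possible and there are no hidden filtration jumps, so the edge homomorphism $\pi_*T_\zeta \to E_\infty^{0,*}$ is the required isomorphism. The only point one might want to be careful about is respecting the multiplicative (and ultimately $\theta$-algebra) structure, but since $T_\zeta$ is $E_\infty$ and the spectral sequence collapses in a single filtration, the edge map is automatically a map of rings, which together with the known $\theta$-algebra structure on $KO_*T_\zeta$ pins down the isomorphism as stated.
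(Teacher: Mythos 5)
Your proof is correct and follows exactly the paper's approach: both deduce the collapse of the $K(1)$-local $KO$-based Adams spectral sequence from the fact that $KO_*T_\zeta$ is an extended comodule, using Proposition \ref{prop: relative ext facts} to compute the $E_2$-page and the adjunction to identify the zero line with $KO_* \otimes \TT(f)$. You have merely spelled out more explicitly the convergence and multiplicativity points that the paper leaves implicit, having already discharged them in Proposition \ref{prop: K1local ASS KO}.
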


\begin{proof}
By \Cref{prop: relative ext facts}, the cohomology of an extended comodule is concentrated in degree zero, and
\[	\Ext^0_{KO_*KO}(KO_*, KO_*KO \otimes \TT(f)) = \Hom_{KO_*}(KO_*, KO_* \otimes \TT(f)) = KO_* \otimes \TT(f).	\]
\end{proof}

The proof of Theorem \ref{thm: KO homology of Tzeta} will take up the remainder of this section. As it is somewhat involved, let us give an outline first. The map $T_\zeta \to KO$ induces a map
\[	KO_0T_\zeta \to KO_0KO = \Cts(\Z_p^\times/\mu,\Z_p).	\]
This is a map of $\theta$-algebras and of $KO_0KO$-comodules, and there are also natural Hopf algebra structures on both objects making it a Hopf algebra map. We also consider the leaky $\lambda$-ring structures of \Cref{lambda-ring functors}. Using all this structure, we prove that the Hopf algebra kernel is just $\TT(f)$, and construct a coalgebra splitting. This implies that $KO_0T_\zeta$ is an induced $KO_0KO$-comodule by a general theorem about Hopf algebras. Finally, one can explicitly construct $\Z_p^\times/\mu$-invariant elements in nonzero degrees of $KO_*T_\zeta$, multiplication by which allows us to transport the result in degree zero to nonzero degrees.

\begin{lem}\label{i injective flat}
The map of $\theta$-algebras $i:\TT(f) \to \TT(b)$ sending $f$ to $\psi^p(b) - b$ is injective and pro-free.
\end{lem}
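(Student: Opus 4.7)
The plan is to reduce to a mod-$p$ computation via \Cref{lem: pro-free mod p}. By Bousfield's theorem (\Cref{thm: free theta-algebra}), $\TT(b)$ and $\TT(f)$ are $p$-completed polynomial rings on the iterated $\theta$-generators $b_n = \theta^n(b)$ and $f_n = \theta^n(f)$ respectively, so both are $p$-torsion-free. The map $i$ sends $f_n \mapsto \theta^n(\psi^p(b) - b)$. By \Cref{lem: pro-free mod p}, pro-freeness of $\TT(b)$ over $\TT(f)$ reduces to freeness of $\F_p[b_0, b_1, \ldots]$ as a module over $\F_p[f_0, f_1, \ldots]$ via the mod-$p$ reduction $\bar{i}$. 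Once pro-freeness is established, injectivity of $i$ follows automatically: by \Cref{lem: pro-free flat}, pro-freeness implies faithful flatness, and a faithfully flat ring map is injective (the kernel $K$ satisfies $K \otimes_{\TT(f)} \TT(b) = K \cdot \TT(b) = 0$, hence $K = 0$ by conservativity).

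Next, compute $\bar{i}(f_n)$ explicitly. Starting from $i(f_0) = \psi^p(b) - b = b_0^p - b_0 + p b_1$, iterate $\theta$ using the standard identities
\[
\theta(x + y) = \theta(x) + \theta(y) - \sum_{k=1}^{p-1} \frac{1}{p}\binom{p}{k}\, x^{p-k} y^k, \qquad \theta(pz) \equiv z^p \pmod p
\]
(the second coming from $\theta(pz) = \psi^p(z) - p^{p-1} z^p$). An induction on $n$ yields
\[
i(f_n) \equiv (b_n^p - b_n) + h_{n-1}(b_0, \ldots, b_{n-1}) \pmod p
\]
for some polynomial $h_{n-1}$; in particular, modulo $p$, the image of $f_n$ depends on the top variable $b_n$ only through the Artin-Schreier term $b_n^p - b_n$. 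This computation is the main obstacle of the proof, since one must carefully track $\theta$-algebra correction terms through repeated applications of $\theta$.

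Finally, prove the mod-$p$ freeness by induction on $n$. Let $R_n = \F_p[b_0, \ldots, b_n]$ and let $S_n \subseteq R_n$ be the subring generated by $\bar{i}(f_0), \ldots, \bar{i}(f_n)$. The claim is that $R_n$ is free over $S_n$ with basis $\{b_0^{i_0} \cdots b_n^{i_n} : 0 \le i_j < p\}$, and $S_n$ is a polynomial algebra on the $\bar{i}(f_j)$. The base case is the classical Artin-Schreier extension $\F_p[b_0]/\F_p[b_0^p - b_0]$. For the inductive step, the formula above shows $\bar{i}(f_n) = b_n^p - b_n + h_{n-1}$ is monic of degree $p$ in $b_n$ over $R_{n-1}$, hence algebraically independent from $R_{n-1}$; so $R_{n-1}[\bar{i}(f_n)]$ is a polynomial extension of $R_{n-1}$, $R_n$ is free of rank $p$ over it with basis $\{1, b_n, \ldots, b_n^{p-1}\}$, and extending the inductive hypothesis along the new generator $\bar{i}(f_n)$ makes $R_{n-1}[\bar{i}(f_n)]$ free over $S_n = S_{n-1}[\bar{i}(f_n)]$ on the inductive basis. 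Stacking these two free extensions yields the claim; passing to the colimit over $n$ completes the proof.
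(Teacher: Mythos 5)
Your overall strategy mirrors the paper's: reduce to a mod-$p$ freeness claim via \Cref{lem: pro-free mod p}, then argue inductively that each new variable enters via a degree-$p$ monic polynomial (an Artin--Schreier-type extension). The final inductive freeness argument and the deduction of injectivity from pro-freeness are fine. However, there are two issues.

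First, the key computation is asserted, not proved. You write that ``an induction on $n$ yields $i(f_n) \equiv (b_n^p - b_n) + h_{n-1}(b_0,\ldots,b_{n-1}) \pmod p$,'' then note that this is ``the main obstacle of the proof'' and stop. That is exactly the content of the lemma: without it, you have a plan, not a proof. Moreover, the precise form you claim (that $b_n$ enters \emph{only} through $b_n^p - b_n$) is not obviously correct and is stronger than what is needed. When you apply $\theta$ to the previous stage, the additivity corrections and the product formula spray out extra terms involving $\theta^n b$ linearly, and the operation $\theta$ does not descend to $\F_p$-coefficients (e.g.\ $\theta(p) \equiv 1 \pmod p$ while $\theta(0) = 0$), so ``track correction terms mod $p$'' is subtler than the phrasing suggests. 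What the inductive freeness argument actually needs is only that $\bar{i}(f_n)$ is monic of degree $p$ in $b_n$ over $\F_p[b_0,\ldots,b_{n-1}]$; you should either weaken your claim to that, or prove the stronger Artin--Schreier form.

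Second, and more importantly, your choice of generators makes the computation much harder than the paper's. You take $b_n = \theta^n(b)$, the iterated-$\theta$ generators, so you must push $f$ through $\theta$ repeatedly and control the non-additive, non-multiplicative corrections at each stage. The paper instead uses the ghost-component generators $b_n = \theta_n(b)$ from \Cref{thm: free theta-algebra}, defined by $\psi^{p^n}(x) = x^{p^n} + p\theta_1(x)^{p^{n-1}} + \cdots + p^n\theta_n(x)$. With these, one never iterates $\theta$ at all: since $\psi^{p^n}$ is a ring homomorphism, $\psi^{p^n}(f) = \psi^{p^{n+1}}(b) - \psi^{p^n}(b)$, and expanding both sides via the ghost formula yields the congruence $f_n \equiv b_n^p - b_n \pmod{(p, b_0,\ldots,b_{n-1})}$ by a short valuation argument, without touching any $\theta$-correction terms. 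This is the one substantive idea in the paper's proof that your proposal is missing: choosing the generators so that the comparison between $\TT(f)$ and $\TT(b)$ is mediated by the additive Adams operations $\psi^{p^n}$ rather than by the non-additive $\theta$.
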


\begin{proof}
Let $b_0 = b$ and $b_i = \theta_i(b)$, and likewise with $f_i$, where the operations $\theta_i$ are as defined in \Cref{thm: free theta-algebra}. Then
\[	\TT(b) = \Z_p[b_0,b_1,\dotsc] \text{ and }\TT(f) = \Z_p[f_0,f_1,\dotsc].	\]
We claim that 
\begin{equation}\label{f congruence}
	f_i \equiv b_i^p - b_i \mod{(p,b_0, \dotsc, b_{i-1})}.
\end{equation}
This is true for $i = 0$. Suppose it has been proven for $i = 0, \dotsc, n-1$. Then
\[	\psi^{p^n}(f) = \psi^{p^{n+1}}(b) - \psi^{p^n}(b),	\]
or in other words,
\begin{equation}\label{fs and bs}
f_0^{p^n} + p f_1^{p^{n-1}} + \dotsb + p^n f_n = b_0^{p^{n+1}} - b_0^{p^n} + p(b_1^{p^n} - b_1^{p^{n-1}}) + \dotsb + p^n(b_n^p - b_n) + p^{n+1} b_{n+1}.
\end{equation}
Since $f_i \equiv b_i^p - b_i$ mod $(p,b_0, \dotsc, b_{i-1})$, we have
\[	f_i \equiv 0 \mod{(p, b_0, \dotsc, b_i)},	\]
and thus
\[	p^if_i^{p^{n-i}} \equiv 0 \mod{(p^{i+n-i+1}, b_0, \dotsc, b_i)}.	\]
Thus, \eqref{fs and bs} reduces mod $(p^{n+1}, b_0, \dotsc, b_{n-1})$ to
\[	p^n f_n \equiv p^n(b_n^p - b_n) \mod{(p^{n+1}, b_0, \dotsc, b_{n-1})}	\]
or just
\[	f_n \equiv b_n^p - b_n \mod{(p, b_0, \dotsc, b_{n-1})},	\]
which is \eqref{f congruence} for $i = n$.

Thus, $\TT(b)/p = \F_p[b_0, b_1, \dotsc]$ is freely generated over $\TT(f)/p = \F_p[f_0, f_1, \dotsc]$ by the monomials $b_0^{n_0}b_1^{n_1}\dotsm$ in which all $n_i < p$ and all but finitely many of the $n_i$ are zero. By \Cref{lem: pro-free mod p}, $\TT(b)$ is pro-free over $\TT(f)$. In particular, the unit map is an injection.
\end{proof}

It will be helpful to make the identification
\[	KO_0KO = \Cts(\Z_p^\times/\mu, \Z_p) \cong \Cts(\Z_p,\Z_p)	\]
using the continuous group isomorphism
\[	\Z_p^\times \stackrel{\cong}{\to} \Z_p, \quad g \mapsto 1.	\]
By \cref{prop: b}, $b \in KO_0KO$ goes to the identity under this identification.

The map $T_\zeta \to KO$ induces a map
\[	\pi:\TT(b) = KO_0T_\zeta \to KO_0KO \cong \Cts(\Z_p, \Z_p),	\]
This is a $\theta$-algebra map, determined by the fact that $\pi(b) = \mathrm{id}$. By \Cref{K theta algebra}, $\psi^p(\pi(b)) = \pi(b)$. Thus, there is an induced map
\[	\overline{\pi}:\TT(b) \otimes_{\TT(f)} \Z_p \to \Cts(\Z_p,\Z_p)	\]
where $\TT(f) \to \Z_p$ sends all $\theta^k(f)$ to 0.

\begin{defn}
We give $\TT(b)$ and $\Cts(\Z_p,\Z_p)$ the leaky $\lambda$-ring structures $\mathcal{L}(\TT(b))$, $\mathcal{L}(\Cts(\Z_p,\Z_p))$ of \Cref{lambda-ring functors}. In each of these $\lambda$-rings, the Adams operations $\psi^k$ associated to the $\lambda$-ring structure are the identity for $k$ prime to $p$, while $\psi^p$ is equal to the operation $\psi^p$ associated to the $\theta$-algebra structure.
\end{defn}

By \Cref{Zp lambda-ring}, the $\lambda$-operations on $\phi \in \Cts(\Z_p,\Z_p)$ are given by
\[	\lambda^n(\phi)(x) = \binom{\phi(x)}{n}.	\]

\begin{lem}
The map
\[	\pi:\TT(b) \to \Cts(\Z_p,\Z_p)	\]
is a map of $\lambda$-rings.
\end{lem}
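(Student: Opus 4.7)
The plan is to reduce the statement to a question about Adams operations, using torsion-freeness of both sides. Since both leaky $\lambda$-ring structures $\mathcal{L}(\TT(b))$ and $\mathcal{L}(\Cts(\Z_p,\Z_p))$ are defined so that $\psi^k = \mathrm{id}$ for $k \in \Z_{\ge 1}$ prime to $p$, and $\psi^p$ coincides with the $\theta$-algebra Adams operation, the map $\pi$ automatically commutes with every Adams operation: it is a ring map (hence commutes with the identity), and it is a $\theta$-algebra map (hence commutes with $\psi^p$, since the latter is a polynomial function of multiplication, addition, and the operation $\theta$).

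Next I would invoke the standard fact that a ring map between torsion-free $\lambda$-rings that commutes with all Adams operations is automatically a $\lambda$-ring map. Concretely, the $\lambda$-operations on a torsion-free $\lambda$-ring are determined by the Adams operations via Newton's identities applied to the power-sum / elementary-symmetric dictionary (working in the ring tensored with $\Q$). Both $\TT(b) = \Z_p[b_0,b_1,\ldots]$ and $\Cts(\Z_p,\Z_p)$ are $p$-complete and torsion-free (the latter sits inside $\mathrm{Maps}(\Z_p,\Q_p)$), so this criterion applies.

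Thus the main content is just the first bullet: verifying that $\pi$ commutes with $\psi^p$, which is immediate from $\pi$ being a map of $\theta$-algebras by the definition of $\mathcal{L}$. Once this is checked, torsion-freeness together with the Newton identity reconstruction of $\lambda^n$ from $\psi^1,\ldots,\psi^n$ forces $\pi$ to commute with every $\lambda^n$. I do not anticipate any real obstacle: the only thing to double-check is that the version of the reconstruction formula needed here (expressing $n!\,\lambda^n$ as an integral polynomial in $\psi^1,\ldots,\psi^n$) indeed transports $\lambda$-operations across $\pi$ without needing to divide by anything the map fails to respect — but since both rings embed in $\Q$-algebras and $\pi$ extends to a $\Q$-algebra map of their rationalizations, this is automatic.
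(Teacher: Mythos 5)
Your proof is correct and is essentially the same argument as the paper's, just unpacked. The paper's proof is one line: $\pi$ is obtained by applying the functor $\mathcal{L}$ (from \Cref{lambda-ring functors}) to a map of $\psi$-$\theta$-algebras, so it is automatically a map of $\lambda$-rings. What you have done is supply the proof that $\mathcal{L}$ is actually a functor on the torsion-free subcategory — that is, that a ring map commuting with the Adams operations $\psi^k$ (here, the identity for $k$ prime to $p$, and the $\theta$-algebra $\psi^p$, which $\pi$ respects because it is a $\theta$-algebra map) must commute with all $\lambda^n$, via the Newton-identity reconstruction of $\lambda^n$ from $\psi^1,\dotsc,\psi^n$ over a torsion-free base. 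The paper relies on Bousfield's \cite[Theorem 3.6]{Bousfield96} to make $\mathcal{S}$ and $\mathcal{L}$ functors in general, whereas your argument establishes the needed functoriality directly in the torsion-free case, which is all that is required here (both $\TT(b)$ and $\Cts(\Z_p,\Z_p)$ are $p$-complete and torsion-free). Both routes are fine; yours is self-contained and makes explicit what the appeal to functoriality hides, at the cost of repeating an argument that Bousfield's theorem already packages.
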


\begin{proof}
This map is obtained by applying the functor $\mathcal{L}$ to a map of $\psi$-$\theta$-algebras.
\end{proof}

\begin{prop}\label{j iso}
The map
\[	\overline{\pi}:\TT(b) \otimes_{\TT(f)} \Z_p \to \Cts(\Z_p,\Z_p)	\]
is an isomorphism.
\end{prop}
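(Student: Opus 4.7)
The plan is to reduce to proving that $\overline{\pi}$ is an isomorphism modulo $p$, and then establish the reduction by a direct combinatorial argument exploiting the pro-freeness of $\TT(b)$ over $\TT(f)$.

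First, both sides of $\overline{\pi}$ are $p$-torsion-free and $L$-complete: the source is pro-free over $\Z_p$ by combining the pro-freeness of $\TT(b)$ over $\TT(f)$ from \Cref{i injective flat} with \Cref{lem: pro-free base change}, and the target $\Cts(\Z_p,\Z_p)$ is topologically free on the Mahler basis $\{\binom{x}{n}\}_{n\ge 0}$. Setting $Q = \coker(\overline\pi)$ and $K = \ker(\overline\pi)$, one sees that $Q/p = 0$ as soon as $\overline\pi \bmod p$ is surjective, and then $K/p = 0$ as soon as $\overline\pi \bmod p$ is additionally injective; $L$-complete Nakayama then forces $K = Q = 0$.

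Next, I would describe the mod-$p$ picture on both sides. By the proof of \Cref{i injective flat}, $\TT(b)/p$ is free over $\TT(f)/p$ on the monomials $\{b_0^{n_0} b_1^{n_1}\cdots : 0 \le n_i < p,\ n_i = 0 \text{ for } i \gg 0\}$, where $b_i := \theta^i(b)$. Hence $(\TT(b)\otimes_{\TT(f)}\Z_p)/p$ has these same monomials as an $\F_p$-basis. On the target side, since $\psi^p(\mathrm{id}) = \mathrm{id}$ in $\Cts(\Z_p,\Z_p)$ by \Cref{K theta algebra}, the formula $\theta(\phi) = (\psi^p(\phi) - \phi^p)/p$ yields $\overline\pi(b_1) = (x - x^p)/p$, which reduces mod $p$ to $x_1 + (x_0 - x_0^p)/p$ where $x = \sum_{j\ge 0} x_j p^j$ is the $p$-adic expansion. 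A straightforward induction on $i$ then shows that $\overline\pi(b_i) \bmod p$ has the unitriangular form $x_i + P_i(x_0,\dotsc,x_{i-1})$ for some $P_i \in \F_p[t_0,\dotsc,t_{i-1}]$.

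The main consequence is that the joint map $(\overline\pi(b_0),\dotsc,\overline\pi(b_{n-1})): \Z/p^n \to \F_p^n$ is a bijection for every $n$, since it is unitriangular in the $p$-adic digits. The $p^n$ monomials $b_0^{n_0} \cdots b_{n-1}^{n_{n-1}}$ with $0 \le n_j < p$ therefore map to $p^n$ linearly independent functions on $\Z/p^n$, giving an $\F_p$-basis of the $p^n$-dimensional space $\Cts(\Z/p^n,\F_p)$. Passing to the colimit as $n \to \infty$, using $\Cts(\Z_p,\F_p) = \colim_n \Cts(\Z/p^n,\F_p)$, then gives that $\overline\pi \bmod p$ is an isomorphism. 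The most delicate step, I expect, will be the inductive verification of the unitriangular form for $\overline\pi(b_i) \bmod p$, which requires carefully tracking the action of $\theta$ on $p$-adic digit expansions; once this is secured, the remaining assertions follow from bijectivity of the digit map and dimension counting.
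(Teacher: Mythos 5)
Your reduction to the mod-$p$ statement via $L$-complete Nakayama is correct and sound: both sides are $p$-torsion-free $L$-complete modules, so it suffices to show $\overline\pi \bmod p$ is an isomorphism, and your argument with $\ker$ and $\coker$ goes through. This is a genuinely different route from the paper, which first establishes surjectivity of $\pi$ using the $\lambda$-ring structure (since $\pi$ is a $\lambda$-ring map hitting $\mathrm{id}$, it hits every $\lambda^k(\mathrm{id}) = \binom{\cdot}{k}$, and these span by Mahler's theorem) and only then identifies the kernel by a mod-$p$ computation plus a $\Tor$-vanishing argument. Your approach dispenses with the Mahler/$\lambda$-ring step entirely, which is a cleaner overall architecture.

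The place where you should be cautious is the step you already flag as delicate. The paper does \emph{not} work with the ordinary $p$-adic digit functions $x_i$; it uses Teichm\"uller digits, writing $a = \sum_i \alpha_i(a)p^i$ with each $\alpha_i(a)$ zero or a $(p-1)$-st root of unity, and presents $\Cts(\Z_p,\Z_p) \cong \Z_p[\alpha_0,\alpha_1,\dotsc]/(\alpha_i^p - \alpha_i)$. The identity $\alpha_i^p = \alpha_i$ holds \emph{exactly}, not just mod $p$, and this is precisely what collapses the induction to a one-line calculation: from $\pi(b_i) \equiv \alpha_i \pmod p$ one gets $\pi(b_i)^{p^{n-i}} \equiv \alpha_i \pmod{p^{n-i+1}}$, and substituting into $\psi^{p^n}(b) = \sum_j p^j \theta_j(b)^{p^{n-j}}$ and comparing against $\mathrm{id} = \sum p^i\alpha_i$ immediately yields $\pi(b_n) \equiv \alpha_n \pmod p$. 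With ordinary $p$-adic digits $x_i \in \{0,\dotsc,p-1\}$, one only has $x_i^p \equiv x_i \pmod p$, and one must track carries when passing from $\theta^{i-1}(\mathrm{id})$ to $\theta^i(\mathrm{id})$; the unitriangularity you assert ($\overline\pi(b_i) \bmod p = x_i + P_i(x_0,\dotsc,x_{i-1})$) should still hold, but the ``straightforward induction'' is considerably more work than the Teichm\"uller version. You also cite \Cref{i injective flat} for freeness of $\TT(b)/p$ over $\TT(f)/p$ on monomials in $b_i := \theta^i(b)$, but that lemma uses $b_i := \theta_i(b)$ (the generators from \Cref{thm: free theta-algebra}), a different, though polynomially equivalent, generating set; this is harmless but worth flagging. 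Lastly, it is worth noting that the paper already exploits the exact formula $\psi^{p^n}(b) = b_0^{p^n} + pb_1^{p^{n-1}} + \dotsb + p^nb_n$, which again is only available in the $\theta_n$-coordinates. On balance: your strategy is correct and slightly more self-contained, but if you carry it out you should strongly consider switching to Teichm\"uller digits and the $\theta_n$ generators as the paper does, since otherwise the combinatorial core of the argument becomes noticeably messier.
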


\begin{proof}
First, let's show the map is surjective. Since the map $\TT(b) \to \Cts(\Z_p,\Z_p)$ is a map of $\lambda$-rings with $\mathrm{id}_{\Z_p}$ in its image, $\lambda^k(\mathrm{id})$ is also in its image for all $k \in \N$. Observe that $\lambda^k(\mathrm{id})$ is precisely the binomial coefficient function $\beta_k:x \mapsto \binom{x}{k}$. It is a theorem of Mahler \cite[4.2.4]{Robert2000} that $\Cts(\Z_p,\Z_p)$ is generated (as a complete $\Z_p$-module) by the binomial functions $\beta_k$ for $k \in \N$. Thus the map $\pi:\TT(b) \to \Cts(\Z_p, \Z_p)$ is surjective.

We now introduce an alternative description of $\Cts(\Z_p,\Z_p)$. Any element of $\Z_p$ has a unique description
\[	a = \sum_{i\ge 0} a_ip^i	\]
where each $a_i$ is a Teichm\"uller lift, i.e., either zero or a $(p-1)$th root of unity. Define
\[	\alpha_i(a) = a_i.	\]
A continuous map $\Z_p \to \Z/p^n$ can be described in terms of a finite number of the $\alpha_i$, so we have
\[	\Cts(\Z_p,\Z/p^n) = \Z/p^n[\alpha_0,\alpha_1,\dotsc]/(\alpha_i^p - \alpha_i).	\]
Taking the limit gives
\[	\Cts(\Z_p,\Z_p) = \Z_p[\alpha_0,\alpha_1,\dotsc]/(\alpha_i^p - \alpha_i).	\]

Let $b_n := \theta_n b$ in $\TT(b)$, so $\TT(b) = \Z_p[b_0, b_1, \ldots]$. Recall the identities 
\begin{equation}\label{b psi identity}
\psi^{p^n}b = b_0^{p^n} + pb_1^{p^{n-1}}+\cdots + p^nb_n.
\end{equation}
We claim that 
\[
\pi(b_n) \equiv \alpha_n \mod p
\]
for all $n$. We proceed by induction: first, $\pi(b_0) = \mathrm{id}$ is congruent to $\alpha_0$ mod $p$. Suppose we have shown that 
\[
\pi(b_i)\equiv \alpha_i\mod p
\]
for each $i<n$. It follows that 
\[
\pi(b_i^{p^{n-i}})\equiv \alpha_i^{p^{n-i}} = \alpha_i \mod p^{n-i+1}
\]
and so 
\[
\pi(p^{i}b_i^{p^i})\equiv p^{i}\alpha_i\mod p^{n+1}. 
\]
Thus, applying $\pi$ to \eqref{b psi identity} and using the fact that $\psi^p$ is the identity on $\Cts(\Z_p,\Z_p)$, we get
\[	\mathrm{id} \equiv \alpha_0 + p\alpha_1 + \dotsb + p^{n-1}\alpha_{n-1} + p^n\pi(b_n)\mod{p^{n+1}}.	\]
But of course $\mathrm{id} = \sum p^i\alpha_i$ on the nose, so solving for $\pi(b_n)$ gives
\[	\pi(b_n) \equiv \alpha_n\mod{p}.	\]

We can now compute the kernel of $\pi$. First note that it contains each
\[	\theta_n(f) = \psi^p(b_n) - b_n.	\]
This is just because it's a $\theta$-algebra map whose kernel contains $f$, and was needed to define the map $\ol{\pi}$ in the first place. We want to show that the $\theta_n(f)$ generate the kernel of $\pi$. But we know that
\[
\pi/p: \F_p[b_0,b_1, \ldots]\to \Cts(\Z_p, \F_p) = \F_p[\alpha_0, \alpha_1, \ldots]/(\alpha_i^p-\alpha_i)
\]
sends $b_i$ to $\alpha_i$, so that $\ker(\pi/p)$ is generated by the elements $b_n^p - b_n \equiv \psi^p(b_n) - b_n$ (mod $p$).

Since $\Cts(\Z_p, \Z_p)$ is a free complete $\Z_p$-module, we have that 
\[	\Tor^1_{\Z_p}(\Cts(\Z_p,\Z_p), \F_p)=0,	\]
and so 
\[	\ker(\pi/p)\cong \ker(\pi)\otimes \F_p.	\]
Since $\TT(b)$ is $p$-adically complete and torsion free, it follows that the elements $\psi^p(b_n) - b_n$ also generate $\ker(\pi)$, concluding the proof. 
\end{proof}

\begin{lem}\label{lem: maps as hopf}
The map $i:\TT(f) \to \TT(b)$ is a map of Hopf algebras, where $\TT(f)$ and $\TT(b)$ both have the Hopf algebra structure of \Cref{ex: Hopf algebra free theta algebra single generator}. The induced Hopf algebra structure on $\Cts(\Z_p,\Z_p) = \TT(b)\sslash \TT(f)$ is the same as that induced by addition on the source $\Z_p$.
\end{lem}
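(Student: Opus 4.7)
The plan is to verify the Hopf-algebra property of $i$ by a universality argument, then deduce the structure on the quotient by a uniqueness argument. Since $\TT(f)$ and $\TT(b)$ are free theta-algebras on primitive generators and $i$ is already a theta-algebra map, checking that $i$ preserves the coproduct reduces, by the universal property of $\TT(f)$, to showing that $i(f) = \psi^p(b) - b$ is primitive in $\TT(b)$. Compatibility of $i$ with counit and antipode is then automatic, these being determined on primitive generators.

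To prove primitivity, I would argue that $\Delta$ and $\psi^p$ commute on $\TT(b)$. Both $\Delta \circ \psi^p$ and $(\psi^p \otimes \psi^p) \circ \Delta$ are theta-algebra maps $\TT(b) \to \TT(b) \otimes \TT(b)$, where $\psi^p$ on the tensor product acts as $\psi^p \otimes \psi^p$ (an immediate consequence of $\psi^p$ being a ring endomorphism compatible with the inclusions of the tensor factors). Evaluated on the generator $b$, both send it to $\psi^p(b) \otimes 1 + 1 \otimes \psi^p(b)$, so by universality of $\TT(b)$ they agree. Hence $\Delta(\psi^p b) = \psi^p(b) \otimes 1 + 1 \otimes \psi^p(b)$, and primitivity of $i(f) = \psi^p(b) - b$ follows from primitivity of $b$.

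For the second assertion, I would place two coproducts on $\Cts(\Z_p, \Z_p) = \TT(b) \sslash \TT(f)$ side by side: the quotient coproduct $\Delta_q$ inherited from $\TT(b)$, and the coproduct $\Delta_+$ induced by addition on $\Z_p$, defined by $\Delta_+(\phi)(a,b) = \phi(a+b)$ under the identification $\Cts(\Z_p, \Z_p) \hat{\otimes} \Cts(\Z_p, \Z_p) \cong \Cts(\Z_p \times \Z_p, \Z_p)$. Both are theta-algebra maps into the completed tensor product; for $\Delta_+$ this uses that $\psi^p$ acts as the identity on $\Cts(\Z_p, \Z_p)$ by \Cref{K theta algebra}, hence also on the tensor product. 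Precomposing either with the surjective theta-algebra map $\pi: \TT(b) \twoheadrightarrow \Cts(\Z_p, \Z_p)$ of \Cref{j iso} yields a theta-algebra map $\TT(b) \to \Cts(\Z_p, \Z_p) \hat{\otimes} \Cts(\Z_p, \Z_p)$ sending $b$ to $\mathrm{id} \otimes 1 + 1 \otimes \mathrm{id}$; here primitivity of $\pi(b) = \mathrm{id}$ under $\Delta_+$ is transparent from $\Delta_+(\mathrm{id})(a,b) = a + b$. By the universal property of $\TT(b)$ the two compositions agree, and surjectivity of $\pi$ yields $\Delta_q = \Delta_+$. The main subtlety is the commutation of $\Delta$ with $\psi^p$, handled above; the remainder is formal.
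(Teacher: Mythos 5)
Your proposal is correct in outcome and takes a cleaner route than the paper on the second assertion, but one step in the primitivity argument is stated circularly.

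For the first assertion, the paper's proof is terse (it appeals to functoriality of $\TT$, which is a bit misleading since $i$ is not $\TT$ applied to a module map); your reduction to checking that $i(f) = \psi^p(b) - b$ is primitive, via the universal property of $\TT(f)$, is a precise and complete version of the intended argument, and the observation that counit and antipode compatibilities are automatic on a $\theta$-algebra generator is correct. However, your subargument that $\Delta$ and $\psi^p$ commute on $\TT(b)$ is circular as written: you claim both $\Delta\circ\psi^p$ and $(\psi^p\otimes\psi^p)\circ\Delta$ send $b$ to $\psi^p(b)\otimes 1 + 1\otimes\psi^p(b)$ and then invoke universality, but for $\Delta\circ\psi^p$ the value on $b$ is $\Delta(\psi^p(b))$, which is exactly what you are trying to compute. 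The fix is immediate: $\Delta$ is by construction a morphism of $\theta$-algebras, which \emph{means} it intertwines $\psi^p$ on $\TT(b)$ with $\psi^p_{\otimes}$ on the tensor product, and you have already identified $\psi^p_{\otimes}$ with $\psi^p\otimes\psi^p$. So $\Delta(\psi^p(b)) = (\psi^p\otimes\psi^p)(\Delta b) = \psi^p(b)\otimes 1 + 1\otimes\psi^p(b)$ with no appeal to universality. Alternatively, just cite \Cref{ex: Hopf algebra free theta algebra single generator}, which already records that each $\psi^{p^n}(b)$ is primitive.

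For the second assertion, your approach genuinely differs from the paper's and is arguably preferable. The paper checks that $\pi$ respects coproducts after tensoring with $\Q$, then verifies on the rational generators $\pi(\psi^{p^n}(b))$. You instead observe that both $\Delta_q\circ\pi$ and $\Delta_+\circ\pi$ are $\theta$-algebra maps out of $\TT(b)$ agreeing on $b$, so they coincide by the universal property, and surjectivity of $\pi$ then forces $\Delta_q = \Delta_+$. This sidesteps rationalization entirely and reduces the check from infinitely many elements to one. The one thing worth saying aloud is that $\Delta_q\circ\pi$ equals $(\pi\otimes\pi)\circ\Delta_{\TT(b)}$ by the definition of the quotient coalgebra structure, which is how you know its value on $b$.
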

\begin{proof}
The first statement follows from the fact that the functor $\TT$ naturally takes values in Hopf algebras. In particular, the diagonal map $\Delta: \TT(M)\to \TT(M)\otimes \TT(M)$ is functorial in $M$. Thus $i$ is a map of Hopf algebras. 

For the second statement, it suffices to show that the given map $\pi:\TT(b) \to \Cts(\Z_p,\Z_p)$ is a Hopf algebra map. This can be checked after tensoring with $\Q$, in which case it suffices to check that $\pi(\psi^{p^n}(b))$ is still primitive. However, we have seen that each $\psi^{p^n}(b)$ goes to the identity of $\Z_p$, which is primitive in $\Cts(\Z_p,\Z_p)$.
\end{proof}

\begin{lem}\label{s}
The map $\pi:\TT(b) \to \Cts(\Z_p, \Z_p)$ admits a coalgebra section $s:\Cts(\Z_p,\Z_p) \to \TT(b)$.
\end{lem}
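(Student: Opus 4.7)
The plan is to define $s$ on the Mahler basis $\{\beta_k\}_{k\ge 0}$ of $\Cts(\Z_p,\Z_p)$ using the $\lambda$-ring operations on $\TT(b)$, and then check the two required properties. Explicitly, I would set $s(\beta_k) := \lambda^k(b)$, where $\lambda^k$ refers to the leaky $\lambda$-ring structure $\mathcal{L}(\TT(b))$ of \Cref{lambda-ring functors}. Using Mahler's theorem, any element of $\Cts(\Z_p,\Z_p)$ is uniquely of the form $\sum_k a_k \beta_k$ with $|a_k|_p \to 0$, so this extends $\Z_p$-linearly and $p$-adically continuously to $\TT(b)$: the series $\sum_k a_k \lambda^k(b)$ converges since $|a_k \lambda^k(b)|_p \le |a_k|_p \to 0$.

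The section property $\pi \circ s = \mathrm{id}$ is then immediate from the preceding observation that $\pi:\TT(b) \to \Cts(\Z_p,\Z_p)$ is a $\lambda$-ring map with $\pi(b) = \mathrm{id}$, combined with the explicit formula $\lambda^k(\mathrm{id})(x) = \binom{x}{k} = \beta_k(x)$ recorded in \Cref{Zp lambda-ring}. So $\pi(s(\beta_k)) = \pi(\lambda^k(b)) = \lambda^k(\pi(b)) = \lambda^k(\mathrm{id}) = \beta_k$.

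To verify that $s$ is a coalgebra map, I would argue that $\Delta:\TT(b) \to \TT(b)\otimes \TT(b)$ is itself a $\lambda$-ring map. By the natural Hopf structure on $\TT$ used in \Cref{lem: maps as hopf}, $\Delta$ is a $\theta$-algebra map with $b$ primitive, and applying the functor $\mathcal{L}$ turns it into a $\lambda$-ring map to the standard $\lambda$-ring structure on the tensor product. Using the defining axiom $\lambda^k(x+y) = \sum_{i+j=k}\lambda^i(x)\lambda^j(y)$, we obtain
\[
\Delta(s(\beta_k)) = \lambda^k(\Delta(b)) = \lambda^k(b\otimes 1 + 1\otimes b) = \sum_{i+j=k} \lambda^i(b)\otimes \lambda^j(b) = (s\otimes s)(\Delta(\beta_k)),
\]
where the last equality uses the Vandermonde identity $\Delta(\beta_k) = \sum_{i+j=k}\beta_i\otimes\beta_j$, which follows from the Hopf structure on $\Cts(\Z_p,\Z_p)$ dual to addition on $\Z_p$ (as identified in \Cref{lem: maps as hopf}).

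The main obstacle is really just confirming the compatibility of $\Delta$ with the $\lambda$-operations, since this is what turns the computation above into a conceptual identity rather than a lucky accident. In a non-functorial setup one would have to check $\Delta(\lambda^k(b)) = \lambda^k(\Delta(b))$ term by term, but here it is essentially automatic from the construction of the Hopf structure on $\TT(b)$ and the naturality of $\mathcal{L}$. The other minor point to verify is that the resulting map is genuinely continuous with respect to the $p$-adic topologies involved, but this follows directly from Mahler's theorem and the fact that $\TT(b)$ is $p$-torsion-free and $p$-complete.
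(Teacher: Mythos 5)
Your proof is correct and takes essentially the same approach as the paper: you define $s(\beta_k) = \lambda^k(b)$ using the leaky $\lambda$-ring structure, extend continuously via Mahler's theorem, and verify the coalgebra property by combining the Cartan formula for $\lambda^k$ with primitivity of $b$ and the fact (established in \Cref{lem: coproduct lambda algebras}) that $\Delta$ is a $\lambda$-ring map. The only cosmetic difference is that you spell out the convergence of $\sum_k a_k \lambda^k(b)$ and cite \Cref{Zp lambda-ring} for $\lambda^k(\mathrm{id}) = \beta_k$ directly rather than referring back to the proof of \Cref{j iso}.
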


\begin{proof}
By Mahler's theorem cited above, $\Cts(\Z_p,\Z_p)$ is a free complete $\Z_p$-module on the binomial functions $\beta_k:x \mapsto \binom{x}{k}$, for $k \in \N$. In the proof of \Cref{j iso}, we saw that, in terms of the $\lambda$-ring structure on $\TT(b)$, $\pi(\lambda^k(b)) = \beta_k$. We can define a continuous $\Z_p$-module section by
\[	s\left( \beta_k\right) = \lambda^k(b).	\]

It remains to see that this is also a coalgebra section. It follows from \Cref{lem: coproduct lambda algebras} that the coproduct $\Delta$ is a morphism of $\lambda$-algebras. 

The binomial functions have comultiplication
\[	\Delta(\beta_n) = \sum_{i=0}^n \beta_i \otimes \beta_{n-i}.	\]
Therefore,
\[	(s \otimes s)\Delta(\beta_n) = \sum_{i=0}^n \lambda^i(b) \otimes \lambda^{n-i}(b) = \Delta s(\beta_n).	\]
So $s$ is a coalgebra map.
\end{proof}

Equipped with the above lemmas, we can finally prove \Cref{thm: KO homology of Tzeta}. We begin by proving the degree zero part.

\begin{prop}\label{T(b) induced} 
There is an isomorphism of $\TT(f)$-modules and $KO_0KO$-comodules
\[	\TT(f) \otimes KO_0KO \cong \TT(b).	\]
\end{prop}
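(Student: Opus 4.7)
The plan is to use the coalgebra section $s: KO_0 KO \to \TT(b)$ from \Cref{s}, which we may take to satisfy $s(1) = 1$ (as $s(\beta_0) = \lambda^0(b) = 1$). Define the candidate isomorphism
\[ \phi: \TT(f) \otimes KO_0 KO \to \TT(b), \qquad \phi(x \otimes c) = i(x) \cdot s(c), \]
where $i: \TT(f) \hookrightarrow \TT(b)$ is the Hopf algebra inclusion of \Cref{lem: maps as hopf}.

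The $\TT(f)$-linearity of $\phi$ is immediate. To verify that $\phi$ is a left $KO_0 KO$-comodule map---where $\TT(b)$ carries the coaction $\psi = (\pi \otimes \mathrm{id}) \circ \Delta_{\TT(b)}$ and the source carries the extended coaction $\Delta_{KO_0 KO} \otimes \mathrm{id}$---I would run a short Sweedler chase. The key ingredients are that $\psi$ is an algebra map (because $\pi$ is a Hopf algebra map); that $\pi \circ i$ equals the augmentation of $\TT(f)$, which holds because $\pi(f) = \psi^p(\pi(b)) - \pi(b) = 0$ in $KO_0 KO$; and that $s$ is a coalgebra section of $\pi$, so $\Delta_{\TT(b)} \circ s = (s \otimes s) \circ \Delta_{KO_0 KO}$ and $\pi \circ s = \mathrm{id}$.

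The main step is showing $\phi$ is an isomorphism. By \Cref{i injective flat} and \Cref{lem: pro-free flat}, $\TT(b)$ is pro-free, hence faithfully flat, over $\TT(f)$; the source is obviously pro-free as well. Thus we are in the situation of a completed Hopf algebra extension $\TT(f) \subset \TT(b)$ with faithfully flat Hopf quotient $KO_0 KO$, equipped with a unital coalgebra section $s$; such an $s$ is automatically $KO_0 KO$-colinear and convolution invertible, with inverse $\chi_{\TT(b)} \circ s$. The classical theory of cleft extensions (Doi--Takeuchi, Schneider) then supplies an explicit two-sided inverse $\phi^{-1}: \TT(b) \to \TT(f) \otimes KO_0 KO$, built from iterated comultiplication, $\pi$, $s$, and the antipode $\chi_{\TT(b)}$; one checks that the image really lands in $\TT(f) \otimes KO_0 KO$ using the identification $\TT(f) = \TT(b)^{co KO_0 KO}$ of coinvariants, and that the composites $\phi \circ \phi^{-1}$ and $\phi^{-1} \circ \phi$ are identities by the antipode axiom.

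The chief obstacle is confirming that this classical Hopf--Galois machinery transfers cleanly to the $L$-complete setting. This is routine: by \Cref{prop: pro-free tensor} the completed tensor products behave well on pro-free modules, and the Sweedler manipulations required to verify that $\phi^{-1}$ is a two-sided inverse are essentially formal. As a consistency check, $\phi$ reduces modulo the augmentation ideal $I_f \subset \TT(f)$ to the identity on $KO_0 KO$, using \Cref{j iso}, so at least the ``mod $I_f$'' part of $\phi$ behaves as expected.
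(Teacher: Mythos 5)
Your proposal takes a genuinely different route from the paper. The paper proves $\phi = m \circ (i \otimes s)$ is an isomorphism by hand, reducing to the uncompleted subrings $A = \Z_p[f, \theta f, \dotsc]$, $B = \Z_p[b, \theta b, \dotsc]$, $C \subset \Cts(\Z_p, \Z_p)$, and then running Milnor--Moore-style filtration arguments: injectivity via the map $\nu = (1 \otimes \pi) \Delta \phi$ and its associated graded, surjectivity via a Nakayama-type claim for nonnegatively filtered $A$-modules. Your proposal instead invokes the general theory of cleft Hopf--Galois extensions (Doi--Takeuchi, Schneider): since $s$ is a unital colinear convolution-invertible section of $\pi$, the extension is cleft, and the crossed-product formula furnishes an explicit inverse. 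This is a cleaner conceptual packaging, and the observations that $s$ is automatically colinear (a one-line Sweedler check using that $s$ is a coalgebra map) and convolution invertible with inverse $\chi_{\TT(b)} \circ s$ are both correct.

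However, there is a genuine gap. The cleft extension theorem produces an isomorphism $\TT(b)^{co\,KO_0KO} \otimes KO_0KO \cong \TT(b)$, and to conclude you explicitly invoke the identification $\TT(f) = \TT(b)^{co\,KO_0KO}$. You assert this identification but do not prove it, and it does not follow from anything you have established: what is immediate (from $\pi \circ i = \eta\epsilon$) is only the inclusion $i(\TT(f)) \subseteq \TT(b)^{co\,KO_0KO}$. The reverse inclusion is nontrivial; it is a consequence of Takeuchi's faithful-flatness criterion for Hopf subalgebras (if $A \hookrightarrow B$ is faithfully flat and $H = B \otimes_A \Z_p$ is the quotient Hopf algebra, then $A = B^{coH}$), which would need to be cited and verified in the $L$-complete setting. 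Note that in the paper's approach, the equality of coinvariants falls out as a corollary of the isomorphism rather than being required as input, so this circularity does not arise. Secondly, you dismiss the transfer of the classical Hopf--Galois machinery to the $L$-complete setting as routine and cite \Cref{prop: pro-free tensor}, but that proposition concerns $K(1)$-local ring spectra and their modules, not $L$-completed tensor products of $\Z_p$-algebras; the relevant algebraic ingredient is closer to \cite[A.7]{HoveyStrickland}, and the paper found it necessary to make the completion issues explicit by passing to the uncompleted rings $A$, $B$, $C$ and completing only at the end. You would need to perform a similar reduction (or verify directly that the Sweedler manipulations behave with respect to $\overline{\otimes}$), rather than asserting that they do.
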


\begin{proof}
\textbf{Note:} \textit{For the duration of this proof, we will make all completions explicit.}

We wish to show that
\[	KO_*T_\zeta = KO_* \otimes \TT(f) \cong KO_*KO \otimes \TT(b).	\]

At this point, we have maps of complete Hopf algebras
\begin{equation}\label{Hopf algebra exact sequence}
	\TT(f) \stackrel{i}{\to} \TT(b) \stackrel{\pi}{\to} KO_0KO,
\end{equation}
such that $KO_0KO = \TT(b) \ol{\otimes}_{\TT(f)} \Z_p$, together with a coalgebra section $s$ of $\pi$. We claim that
\[	
\begin{tikzcd}
	\widehat{\phi}: \TT(f)\,\ol{\otimes}\, KO_0KO\arrow[r,"i\otimes s"] & \TT(b)\,\ol{\otimes}\, \TT(b)\arrow[r,"m"] & \TT(b) 
\end{tikzcd}
\]
is the desired isomorphism. This uses a variant of the arguments in \cite[Section 1]{MilnorMoore}. The situation is slightly complicated by the omnipresence of completion, as well as the fact that the objects involved are not graded in any manageable way.

First, we handle the completions. Let $A$ be the uncompleted polynomial ring 
\[
A:=\Z_p[f,\theta(f), \theta_2(f), \dotsc],
\] 
and likewise let $B$ be the uncompleted polynomial ring on the $\theta_n(b)$. Let $C$ be the sub-$\Z_p$-algebra of $\Cts(\Z_p,\Z_p)$ consisting of those functions which can be written as polynomials with $\Q_p$ coefficients. As an uncompleted $\Z_p$-module, $C$ is free on the $\beta_n$. The sequence \eqref{Hopf algebra exact sequence} restricts to a sequence of maps of Hopf algebras
\begin{equation}\label{Hopf algebra uncompleted exact sequence}
	A \stackrel{i}{\to} B \stackrel{\pi}{\to} C,
\end{equation}
such that $C = B \otimes_A \Z_p$, together with a coalgebra section $s$ of $\pi$. Write $\phi$ for the map
\[	A \otimes C \stackrel{i \otimes s}{\to} B \otimes B \stackrel{mult}{\to} B.	\]
Since $\widehat{\phi}$ is the completion of $\phi$, it suffices to prove that $\phi$ is an isomorphism of $A$-modules and $C$-comodules.

Now, $\phi$ is clearly an $A$-module, and it is also a map of $C$-comodules since $s$ is a map of coalgebras. We will show that $\phi$ is injective by the method of \cite[Proposition 1.7]{MilnorMoore}. Note that $C$ has a coalgebra grading in which the degree of $\beta_n$ is $n$. This induces filtrations on $A \otimes C$ and $B \otimes C$, in which
\[	F_{\le n}(A \otimes C) = \sum_{q \le n} A \otimes C_q,	\]
and likewise for $B \otimes C$. Consider the map
\[	
\nu = (1 \otimes \pi)\Delta\phi: A \otimes C \to B \otimes C.
\]
Using the comultiplicativity of $s$, we see that 
\[	\nu(1 \otimes \beta_n) = \sum_{i=0}^n s(\beta_i) \otimes \beta_{n-i}.	\]
Furthermore, since $\nu$ is a left $A$-module map, it preserves the filtration. Thus, there is an induced map $\ol{\nu}$ on associated graded objects. However, as $C$ is the direct sum of the $C_q$, the associated graded objects are simply $A \otimes C$ and $B \otimes C$. Once again, one computes that
\[	
\ol{\nu}(1 \otimes \beta_n) = 1 \otimes \beta_n.
\]
As $\ol{\nu}$ is a left $A$-module map, we can identify it with
\[	
i \otimes 1:A \otimes C \to B \otimes C.
\]
Since $C$ is flat over $\Z_p$, this map is injective. Thus $\nu$ is injective, so $\phi$ is injective, as desired. 

For surjectivity, we use a version of \cite[Proposition 1.6]{MilnorMoore}. Filter $A$ as follows: the elements of filtration $\ge s$ are the polynomials in $f, \theta(f), \theta_2(f), \dotsc$ all of whose terms are of degree $\ge s$. Giving $B$ the analogous filtration, the map $i:A \to B$ is a filtered $A$-module map, and the counit $\epsilon:A \to \Z_p$ kills the ideal of positively filtered elements. The $A$-module structure on $C$ factors through $\epsilon$, and we give $C$ the trivial filtration $C = C_{\ge 0} = C_{\ge 1} = \dotsb$. Then $\pi:B \to C$ is also filtered.

\vspace{1em}
\noindent \textbf{Claim 1.} Let $M$ be a nonnegatively filtered $A$-module. Then $M = 0$ iff $\Z_p \otimes_A M = 0$.

\vspace{1em}
Indeed, if $M$ is nonzero, then it has a nonzero element $x$ of lowest possible filtration, say $s$. But the kernel of $M \to \Z_p \otimes_A M$ is precisely $A_{> 0}\cdot M$, so if $\Z_p \otimes_A M = 0$, then $x$ is an $A$-multiple of an element of lower filtration. 

\vspace{1em}
\noindent \textbf{Claim 2.} Let $g:M_1 \to M_2$ be a filtered $A$-module map, where $M_1$ and $M_2$ are nonnegatively filtered. Then $g$ is surjective iff 
\[	\Z_p \otimes_A g: \Z_p \otimes_A M_1 \to \Z_p \otimes_A M_2	\]
is surjective.

\vspace{1em}
The direction $(\Rightarrow)$ holds because the tensor product is right exact. For the direction $(\Leftarrow)$, let $N = \mathrm{coker}(g)$. The $A$-module $N$ receives a filtration in an evident way. Again using right exactness of the tensor product, we have that 
\[	\Z_p \otimes_A N = \mathrm{coker}(\Z_p \otimes_A g).	\]
If $\Z_p \otimes_A g$ is surjective, then $\Z_p \otimes_A N = 0$, so $N = 0$ by Claim 1. (Since completion is neither left nor right exact in general, we need to work with the uncompleted tensor product here.)

Finally, $\phi: A \otimes C \to B$ is a filtered $A$-module map whose source and target are nonnegatively filtered. We have
\[	\Z_p \otimes_A \phi = \mathrm{id}:C \to C = \Z_p \otimes_A B.	\]
By Claim 2, $\phi$ is surjective.
\end{proof}\begin{proof}[Proof of \Cref{thm: KO homology of Tzeta}]
We have already constructed an isomorphism of $KO_0KO$-comodules
\[	\TT(f) \otimes KO_0KO \to KO_0T_\zeta.	\]
To extend this to a map
\[	\TT(f) \otimes KO_*KO = KO_* \otimes \TT(f) \otimes KO_0KO \to KO_*T_\zeta,	\]
one has identify the image of $KO_*$ in $KO_*T_\zeta$, which will consist of elements which are invariant under the Adams operations. First, suppose that $p > 2$. Since $g \in \Z_p^\times$ maps to a topological generator of $\Z_p^\times/\mu$, we have $g^{p-1} \in 1 + p\Z_p$. Write $g^{p-1} = 1+h$ where $h \in p\Z_p$. Then the series
\[	g^{-b(p-1)} := (1+h)^{-b} = \sum_{n \ge 0} \binom{-b}{n}h^n	\]
converges in $\TT(b)$. This uses the classical fact that
\[	\lim_{n\to\infty} n - v_p(n!) = \infty.	\]
In $KO_*T_\zeta = KO_* \otimes \TT(b)$,
\[	\psi^g(g^{-b(p-1)}v_1) = (1+h)^{-(b+1)}\cdot g^{p-1}v_1 = g^{-b(p-1)}v_1.	\]
Thus, writing $\widetilde{v_1} = g^{-b(p-1)}v_1 \in KO_{2(p-1)}T_\zeta$, we see that multiplication by $\widetilde{v_1}^k$ induces an isomorphism of $KO_0KO$-comodules 
\[	KO_0T_\zeta \stackrel{\sim}{\to} KO_{2(p-1)k}T_\zeta.	\]
As $KO_0T_\zeta$ is an extended comodule, the same follows for $KO_*T_\zeta$, and we obtain
\[	\pi_*T_\zeta = (KO_*T_\zeta)^{\Z_p^\times/\mu} = \Z_p[\widetilde{v_1}^{\pm 1}] \otimes \TT(f).	\]
The isomorphism with $KO_* \otimes \TT(f)$ is given by mapping $\widetilde{v_1}$ to $v_1$.

Now suppose that $p = 2$, in which case $KO_*$ is generated by $\eta \in KO_1$, $v = 2u^2 \in KO_4$, and $w = u^4 \in KO_8$, where $u \in K_2$ is the Bott element. We have that $g^2 = 1 + h$ where $h \in 4\Z_2$. Again, this means that the series $g^{-2b} = (1+h)^{-b}$ converges, and we can define $\widetilde{v} = g^{-2b}v$, $\widetilde{w} = g^{-4b}w$. By the same arguments, $KO_{4*}T_\zeta$ is an etended comodule. To deal with the rest, we note that
\[	KO_{8k+1}T_\zeta \cong KO_{8k+2}T_\zeta \cong KO_0T_\zeta \otimes_{\Z_2} \F_2	\]
as $KO_0KO$-comodules. Tensoring the exact sequence
\[	0 \to \pi_0T_\zeta \to KO_0T_\zeta \stackrel{\psi^g-1}{\to} KO_0T_\zeta \to 0	\]
with $\F_2$ and noting that $KO_0T_\zeta$ is flat over $\Z_2$, we obtain the desired result.
\end{proof}

\begin{rmk}\label{rmk: Hopkins mistake}
	As we mentioned earlier, Hopkins' argument from \cite{K1localrings} has errors. In particular, he rightly claims that the map 
	\[
	\begin{tikzcd}
		\TT(f)\otimes \Cts(\Z_p, \Z_p)\arrow[r,"i\otimes s"] &\TT(b)\otimes \TT(b)\arrow[r,"\mathrm{mult}"] & \TT(b).
	\end{tikzcd}
	\]
	However, he argues this by asserting that the inverse to this map is given by 
	\[
	\begin{tikzcd}
		\TT(b)\arrow[r,"\Delta"] & \TT(b)\otimes \TT(b)\arrow[r,"(1-s\circ \pi)\otimes \pi"] & \TT(f)\otimes \Cts(\Z_p, \Z_p).
	\end{tikzcd}
	\]
	But this map simply cannot be the inverse, indeed it is not even injective. To see this, let $\beta_n$ denote the $n$th binomial coefficient function. The section $s$ is a map of coalgebras and the diagonal on the $\beta_n$ satisfy the Cartan formula. Thus 
	\[
	\Delta(s\beta_n) = \sum_{i+j=n}s(\beta_i)\otimes s(\beta_j).
	\]
	Thus, under the above map, one computes that 
	\[
	s(\beta_n)\mapsto \sum_{i+j=n}\pi(s(\beta_i))\otimes (1-s\pi)(s\beta_j).
	\]
	Since $s$ is a section, $\pi s = 1$. Note that 
	\[
	(1-s\pi)(s(\beta_j)) = s(\beta_j) - s\pi s(\beta_j) = s(\beta_j)-s(\beta_j)=0.
	\]
	Note that this includes the case when $j=0$, in which case $\beta_j = \beta_0 = 1$. Thus the above map has a nontrivial kernel, and so is not injective. 
\end{rmk}

\section{Co-operations for \texorpdfstring{$T_\zeta$}{Tzeta}}\label{sec: coops for cone on zeta}

We saw in \Cref{prop: homology of cone on zeta} that $KO_*T_\zeta \cong KO_* \otimes \TT(b)$. As $\TT(b)$ is a completion of a polynomial ring, $KO_*T_\zeta$ is pro-free over $KO_*$. Moreover, we have an equivalence of $KO$-modules in $\Sp$, 
\[
KO\wedge T_\zeta \wedge T_\zeta\simeq (KO\wedge T_\zeta)\wedge_{KO} (KO\wedge T_\zeta).
\] 
So it follows from \Cref{prop: pro-free tensor} that,
\begin{equation}\label{eq: KO-homology Tzeta coops}
	KO_*(T_\zeta \sma T_\zeta) \cong KO_*T_\zeta \otimes_{KO_*} KO_*T_\zeta \cong KO_* \otimes \TT(b, b').
\end{equation}
Recall that the $KO_*KO$-comodule structure is given by an action of the group $\Z_p^\times/\mu$. In this case, the action comes from the diagonal action on the two tensor factors, so that
\begin{equation*}
	\psi^g(b) = b+1, \quad \psi^g(b') = b'+1.
\end{equation*}

As we saw in the previous section, the computation of $\pi_*T_\zeta$ followed from knowing that $KO_*T_\zeta$ was an extended comodule. The same strategy allows us to compute the co-operations algebra $\pi_*(T_\zeta \sma T_\zeta)$, after the following lemma.

\begin{lem}\label{lem: tensor product extended}
A tensor product of extended $KO_*KO$-comodules is extended. Moreover, we have
\begin{gather*}
	\Hom_{\Comod_{KO_*KO}^\wedge}(KO_*, (KO_*KO \otimes_{KO_*} M_*) \otimes_{KO_*} (KO_*KO \otimes_{KO_*} N_*)) \\
	\cong KO_*KO \otimes_{KO_*} M_* \otimes_{KO_*} N_* 
	\cong \Cts(\Z_p^\times/\mu, \Z_p) \otimes_{\Z_p} M_* \otimes_{KO_*} N_*.
\end{gather*}
\end{lem}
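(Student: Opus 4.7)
The plan is to use the identification from \Cref{thm: K Hopf algebroid} of $\Gamma := KO_*KO$ with $\Cts(\Z_p^\times/\mu, \Z_p) \otimes_{\Z_p} KO_*$, under which comodule structures correspond to continuous $\Z_p^\times/\mu$-actions and extended comodules correspond to modules of the form $\Cts(\Z_p^\times/\mu, N_*)$ with the translation action on the source. Writing $G = \Z_p^\times/\mu$ for brevity, the first assertion then reduces to a change-of-variables argument on $G \times G$, while the Hom computation is a formal adjunction.

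For the first claim, I would rearrange
\[
(\Gamma \otimes_{KO_*} M_*) \otimes_{KO_*} (\Gamma \otimes_{KO_*} N_*) \cong (\Gamma \otimes_{KO_*} \Gamma) \otimes_{KO_*} M_* \otimes_{KO_*} N_*,
\]
and record that the diagonal comodule coaction corresponds to $G$ acting diagonally by source-translation on the two $\Gamma$ factors and trivially on $M_*$ and $N_*$. Using that $\Cts(G, \Z_p)$ is pro-free over $\Z_p$ together with \Cref{prop: pro-free tensor}, one identifies $\Gamma \otimes_{KO_*} \Gamma$ with $\Cts(G \times G, \Z_p) \otimes_{\Z_p} KO_*$. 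The map
\[
\tau: G \times G \to G \times G, \quad (x, y) \mapsto (xy^{-1}, y),
\]
is then a $G$-equivariant homeomorphism when the source carries the translation action in the second coordinate only and the target carries the diagonal action (this uses that $G$ is abelian). Its pullback yields an isomorphism of comodules exhibiting
\[
\Gamma \otimes_{KO_*} \Gamma \otimes_{KO_*} M_* \otimes_{KO_*} N_* \cong \Gamma \otimes_{KO_*} \bigl(\Gamma \otimes_{KO_*} M_* \otimes_{KO_*} N_*\bigr)
\]
as an extended comodule on the inner coefficient $\Gamma \otimes_{KO_*} M_* \otimes_{KO_*} N_*$, with only the outer $\Gamma$ carrying the translation action, proving the first assertion.

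For the Hom computation, the tensor product has just been identified with an extended comodule $\Gamma \otimes_{KO_*} K_*$ where $K_* = \Gamma \otimes_{KO_*} M_* \otimes_{KO_*} N_*$. Applying the adjunction between the forgetful functor $\Comod_{KO_*KO}^\wedge \to \Mod^\wedge_{KO_*}$ and the extended comodule functor (the relevant instance of \Cref{prop: relative ext facts}(e)) yields
\[
\Hom_{\Comod_{KO_*KO}^\wedge}(KO_*, \Gamma \otimes_{KO_*} K_*) \cong \Hom_{\Mod^\wedge_{KO_*}}(KO_*, K_*) \cong K_*,
\]
which upon unpacking $K_*$ and invoking $\Gamma \cong \Cts(G, \Z_p) \otimes_{\Z_p} KO_*$ gives the stated formula. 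I do not expect serious obstacles here; the one point to get right is the change-of-variables map $\tau$, which must correctly convert the diagonal $G$-action on $\Gamma \otimes_{KO_*} \Gamma$ into a single translation action so that the extended comodule structure becomes manifest.
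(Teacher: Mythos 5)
Your approach is essentially the same as the paper's: reduce to understanding the diagonal $G$-action on $\Cts(G \times G, -)$ and untwist it with the change-of-variables map $(x,y) \mapsto (xy^{-1}, y)$, using abelianness of $G$, then conclude the Hom computation by the extension adjunction. However, you have the equivariance backwards: with your formula $\tau(x,y) = (xy^{-1}, y)$, one has $\tau(hx, hy) = (xy^{-1}, hy)$ for abelian $G$, so $\tau$ intertwines the \emph{diagonal} action on the \emph{source} with the second-coordinate translation on the \emph{target}, not the other way around as you claim. (If you want source to carry the second-coordinate action and target the diagonal, the formula should be $(x,y) \mapsto (xy, y)$, i.e.\ the inverse map.) This is a sign-level slip that does not affect the conclusion, since the pullback $\tau^*$ still gives the required isomorphism of comodules in the appropriate direction; but as stated, the equivariance claim fails. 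The paper's proof is the same argument with source and target correctly labeled, plus an initial reduction to $M_* = N_* = KO_*$ which you handle directly instead.
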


\begin{proof}
One immediately reduces to the case $M_* = N_* = KO_*$. Abbreviating the group $\Z_p^\times/\mu$ as $G$, we have
\[	KO_*KO \otimes_{KO_*} KO_*KO \cong \Cts(G, KO_*) \otimes_{KO_*} \Cts(G, KO_*) \cong \Cts(G \times G, KO_*),	\]
By the results of \Cref{subsec: Hopf algebroids K KO}, this is an extended $KO_*KO$-comodule iff it is an induced $G$-module, where $G$ acts by precomposition with the diagonal action on $G \times G$.

Define
\begin{align*}
	&m: G \times G \to G, \quad (g_1, g_2) \mapsto g_1g_2^{-1}, \\
	&p: G \times G \to G, \quad (g_1, g_2) \mapsto g_2.
\end{align*}
Consider the map
\[	
\varphi:= m \times p: G \times G \to G \times G; (g_1,g_2)\mapsto (g_1g_2^{-1}, g_2).
\]
We regard the target as a $G$-set via
\[
h\cdot (g_1,g_2) = (g_1,h g_2)
\]
and the source as a $G$-set via the diagonal action. Then the map is a continuous map of $G$-sets. Note also that $\varphi$ is a bijection, with inverse
\[
\varphi^{-1}(g_1,g_2) = (g_1g_2,g_2).
\] 
and hence $\varphi$ is an isomorphism of $G$-sets. Thus the induced map
\[	
\varphi^*=m^* \otimes p^*: \Cts(G, KO_*) \otimes \Cts(G, KO_*) \to \Cts(G \times G, KO_*)
\]
is an isomorphism of $KO_*$-modules. As $\varphi$ is an equivariant map, it is also a map of $G$-modules, where the first copy of $\Cts(G, KO_*)$ is given the trivial $G$-action and the second is given the usual action. This implies that $\Cts(G \times G, KO_*)$, and more precisely, of the form
\[	\Cts(G \times G, KO_*) \cong KO_*KO \otimes_{KO_*} \Cts(G, KO_*),	\]
where the second tensor factor has trivial coaction and is mapped into $\Cts(G \times G, KO_*)$ via $m^*$. By \Cref{prop: relative ext facts}, the primitives of $\Cts(G \times G, KO_*)$ is just this second tensor factor.
\end{proof}

\begin{rmk}
	The above proof relies, in an essential way, on the fact that the group $G = \Z_p^\times/\mu$ is an abelian group. 
\end{rmk}
 
In the following, we will frequently use $x$ and $\overline{x}$ to denote the image of $x$ along respectively the left and right units of a Hopf algebroid.

\begin{thm} 
There is an isomorphism of $\theta$-algebras
\[	\pi_*(T_\zeta \sma T_\zeta) \cong KO_* \otimes \TT(f,\overline{f},\ell)/(\psi^p(\ell) - \ell - f + \overline{f}).	\]
\end{thm}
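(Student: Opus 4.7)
The plan is to construct an explicit $\theta$-algebra map and then verify it is an isomorphism, imitating the strategy of the proof of Theorem \ref{thm: KO homology of Tzeta}.

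First, I would observe that the class $\ell := b - b' \in KO_0(T_\zeta \wedge T_\zeta) = \TT(b, b')$ is $\psi^g$-invariant, because $\psi^g(b) = b+1$ and $\psi^g(b') = b'+1$. It is therefore primitive under the $KO_*KO$-coaction and so lies in $\pi_0(T_\zeta \wedge T_\zeta)$. Since $\psi^p$ is a ring endomorphism of $\TT(b, b')$ and $\psi^p(b) - b = f$, $\psi^p(b') - b' = \overline{f}$ by the definitions of $f$ and $\overline{f}$, we get
\[
\psi^p(\ell) = \psi^p(b) - \psi^p(b') = \ell + f - \overline{f},
\]
so the relation of the statement holds. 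This yields a well-defined $\theta$-algebra map
\[
\varphi: KO_* \otimes \TT(f, \overline{f}, \ell)/(\psi^p(\ell) - \ell - f + \overline{f}) \longrightarrow \pi_*(T_\zeta \wedge T_\zeta).
\]

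Next, I would compute the target using Theorem \ref{thm: KO homology of Tzeta}, which identifies $KO_*T_\zeta$ as an extended $KO_*KO$-comodule with primitives $KO_* \otimes \TT(f)$. Hence $KO_*(T_\zeta \wedge T_\zeta) \cong KO_*T_\zeta \otimes_{KO_*} KO_*T_\zeta$ is a tensor product of extended comodules, and Lemma \ref{lem: tensor product extended} gives that it is again extended, with primitives
\[
\pi_*(T_\zeta \wedge T_\zeta) \cong KO_* \otimes \TT(f, \overline{f}) \otimes \Cts(\Z_p, \Z_p)
\]
as $\TT(f, \overline{f})$-modules. Tracing through the proof of that lemma, the primitive $\Cts(\Z_p, \Z_p)$ factor arises via the change of variables $(g_1, g_2) \mapsto g_1 g_2^{-1}$, so that under this identification $\ell = b - b'$ corresponds to the identity function $\mathrm{id}_{\Z_p}$.

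To conclude that $\varphi$ is an isomorphism, I would adapt the argument of Proposition \ref{T(b) induced}. That proof constructs, for $T_\zeta$, an isomorphism $\TT(f) \otimes \Cts(\Z_p, \Z_p) \to \TT(b)$ of $\TT(f)$-modules using the coalgebra section $s$ with $s(\binom{x}{n}) = \lambda^n(b)$, and verifies bijectivity via a Milnor-Moore-style filtration argument. Replacing $\TT(f)$ by $\TT(f, \overline{f})$ and $b$ by $\ell$ inside the source of $\varphi$, I would define analogously a $\TT(f, \overline{f})$-module map $\TT(f, \overline{f}) \otimes \Cts(\Z_p, \Z_p) \to \TT(f, \overline{f}, \ell)/(\psi^p(\ell) - \ell - f + \overline{f})$ and adapt the filtration argument to show it is an isomorphism. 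Composing with $\varphi$ would then identify the two $\TT(f, \overline{f})$-module descriptions of $\pi_0(T_\zeta \wedge T_\zeta)$, giving the isomorphism in degree zero; extension from $\pi_0$ to $\pi_*$ goes exactly as in the final paragraph of the proof of Theorem \ref{thm: KO homology of Tzeta}, using $\psi^g$-invariant lifts of the Bott element.

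The hard part will be verifying that the principal ring ideal $(r)$ for $r = \psi^p(\ell) - \ell - f + \overline{f}$ is already a $\theta$-ideal. This is essential for the source of $\varphi$ to be tractable as a $\TT(f, \overline{f})$-module, and I would expect it to follow by showing directly that $\psi^p(r) \in (r)$ (so that $(r)$ is $\psi^p$-stable), whence $p \theta(r) = \psi^p(r) - r^p \in (r)$, and then using $p$-torsion-freeness of $\TT(\ell, f, \overline{f})/(r)$ to conclude $\theta^n(r) \in (r)$ for all $n \geq 1$.
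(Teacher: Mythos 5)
Your construction of $\ell = b - b'$, the verification that $\psi^p(\ell) - \ell - f + \ol{f} = 0$, and the identification of the $\Cts(\Z_p,\Z_p)$ factor with the antidiagonal all match the paper. However, the step you flag as ``the hard part'' rests on a false premise: the \emph{ordinary} ideal $(r)$ for $r = \psi^p(\ell) - \ell - f + \ol{f}$ is \emph{not} a $\theta$-ideal, so your proposed verification would fail. Write $r = \ol{f} - c$ with $c = f + \ell - \psi^p(\ell) \in \TT(f,\ell)$. The additivity formula for $\theta$ gives $\theta(r) = \theta(\ol{f}) + \theta(-c) - \frac{1}{p}\sum_{i=1}^{p-1}\binom{p}{i}\ol{f}^i(-c)^{p-i}$; modulo $(\ol{f}-c)$ every term but $\theta(\ol{f})$ lands in $\TT(f,\ell)$, while $\theta(\ol{f})$ survives as an independent polynomial generator of $\TT(f,\ol{f},\ell)/(r) \cong \TT(f,\ell)[\theta(\ol{f}),\theta^2(\ol{f}),\dotsc]$. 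Hence $\theta(r)\notin(r)$, and consequently $\psi^p(r) = r^p + p\theta(r)\notin(r)$, so even the first step of your proposed check ($\psi^p(r)\in(r)$) is already false.

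The notation in the statement must therefore be read as the quotient by the $\theta$-ideal generated by $r$. That $\theta$-ideal is the kernel of the $\theta$-algebra retraction $\TT(f,\ol{f},\ell)\to\TT(f,\ell)$ sending $\ol{f}\mapsto f+\ell-\psi^p(\ell)$, so the source of your $\varphi$ is simply $\TT(f,\ell)$, a free $\theta$-algebra on two generators. This observation is the crux of the paper's injectivity argument and makes your proposed Milnor--Moore adaptation unnecessary: by \Cref{prop: homology of cone on zeta}, $\pi_0(T_\zeta\sma T_\zeta)\cong\pi_0T_\zeta\otimes\TT(x)$ is \emph{also} a free $\theta$-algebra on two generators, and the paper then observes that a nontrivial $\theta$-ideal quotient of a free $\theta$-algebra on two generators cannot again be free on two generators, so the epimorphism $\varphi$ you build is automatically an isomorphism. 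If you did want to carry out the filtration argument instead, you would additionally need to explain why the composite of your comparison map with $\varphi$ reproduces the comodule-theoretic isomorphism rather than some other $\TT(f,\ol{f})$-module map; your sketch leaves this unaddressed.
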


\begin{proof}
As $KO_*T_\zeta$ is $KO_*$-pro-free, we have
\[	KO_*(T_\zeta \sma T_\zeta) \cong KO_*(T_\zeta) \otimes KO_*(T_\zeta)	\]
as $KO_*KO$-comodules. We saw in the proof of \Cref{thm: KO homology of Tzeta} that $KO_*T_\zeta$ is an extended comodule. The lemma then implies that $KO_*(T_\zeta \sma T_\zeta)$ is extended, and that there is an additive isomorphism
\begin{equation*}
\begin{split}
	\pi_*(T_\zeta \sma T_\zeta) = \Hom_{\Comod_{KO_*}^\wedge}(KO_*, KO_*(T_\zeta \sma T_\zeta)^{\Z_p^\times/\mu}) \\
	 \cong \pi_*T_\zeta \otimes_{KO_*} \pi_*T_\zeta \otimes \Cts(\Z_p^\times/\mu,\Z_p) \\
	\cong KO_* \otimes \TT(f,\overline{f}) \otimes \Cts(\Z_p, \Z_p).
\end{split}
\end{equation*}
Here $f$ and $\overline{f}$ come from the left and right copies of $\pi_0T_\zeta$ respectively.

Note that, as the isomorphism $KO_*T_\zeta \cong \TT(f) \otimes KO_*KO$ of \Cref{thm: KO homology of Tzeta} is an isomorphism of comodules but not of comodule algebras; the above isomorphism is only additive. We can nevertheless identify the multiplicative structure on $\pi_*(T_\zeta \sma T_\zeta)$ by locating the primitive elements identified above inside the ring
\[	KO_*(T_\zeta \sma T_\zeta) = KO_* \otimes \TT(b, \overline{b}).	\]
In fact, the theta-algebra $\TT(f, \overline{f})$ is just that generated by $f = \psi^p(b) - b$ and $\ol{f} = \psi^p(\ol{b}) - b$ inside $KO_*(T_\zeta \sma T_\zeta)$. Likewise, there is a primitive copy of $KO_*$ inside $KO_*(T_\zeta \sma T_\zeta)$, namely that generated by the left unit on $\widetilde{v_1}$ (or by the left unit on $\eta$, $\widetilde{v}$, and $\widetilde{w}$ if $p=2$).

We still have to identify the $\Cts(\Z_p,\Z_p)$ factor. The proof of \Cref{lem: tensor product extended} tells us that, under the isomorphism
\[	KO_0(T_\zeta \sma T_\zeta) \cong \TT(f,\overline{f}) \otimes \Cts(\Z_p \times \Z_p, \Z_p),	\]
this factor is precisely
\begin{equation}\label{antidiagonal}
	\{1 \otimes \phi \in \TT(f,\overline{f}) \otimes \Cts(\Z_p \times \Z_p, \Z_p):f(x,y) = f(0,y-x)\}.
\end{equation}
The submodule $1 \otimes \Cts(\Z_p \times \Z_p, \Z_p)$ is the image of
\[	s \otimes \overline{s}:\Cts(\Z_p, \Z_p) \otimes \Cts(\Z_p, \Z_p) \to KO_0(T_\zeta \sma T_\zeta),	\]
where $s$ is as defined in \Cref{s}. That is,
\[	(s \otimes \overline{s})(\beta_n \otimes \beta_m) = \lambda^n(b)\lambda^m(\overline{b}).	\]
By Mahler's theorem, the submodule of $f \in \Cts(\Z_p \times \Z_p, \Z_p)$ satisfying the condition of \eqref{antidiagonal} is spanned by 
\[	(x,y) \mapsto \binom{y-x}{n}.	\]
Thus, the invariant $\Cts(\Z_p, \Z_p)$ factor in $KO_0(T_\zeta \sma T_\zeta)$ is spanned by
\[	\lambda^n(b - \overline{b}).	\]

In particular, the sub-$\lambda$-algebra of $KO_0(T_\zeta \sma T_\zeta)$ generated by $b-\overline{b}$ contains this $\Cts(\Z_p,\Z_p)$. But this is the same as the sub-$\theta$-algebra generated by $b-\overline{b}$. Let
\[	\ell = b - \overline{b}.	\]
The formula $\psi^p(b) - b = f$, and the analogous one for $\overline{f}$, show that
\[	f - \overline{f} = \psi^p(\ell) - \ell.	\]
Thus, there is an epimorphism
\begin{equation}\label{Tzeta cooperations epi}
	\TT(f,\overline{f},\ell)/(\psi^p(\ell) - \ell - f + \overline{f}) \surj \pi_0(T_\zeta \sma T_\zeta).
\end{equation}

To see that this is an isomorphism, note that \Cref{prop: homology of cone on zeta} implies that
\[	\pi_0(T_\zeta \sma T_\zeta) \cong \pi_0T_\zeta \otimes \TT(x)	\]
as $\pi_0(T_\zeta)$-modules. That is, it is a free $\theta$-algebra on two generators. But the left-hand side of \Cref{Tzeta cooperations epi} is free on the generators $f$ and $\ell$, and any nontrivial quotient of it would not be free on two generators. Thus, we have
\[	\pi_0(T_\zeta \sma T_\zeta) = \TT(f,\overline{f},\ell)/(\psi^p(\ell) - \ell - f + \overline{f}).	\]
This concludes the proof.

%We finally find the homotopy groups in nonzero degrees. First suppose $p > 2$. Then $KO_*(T_\zeta \sma T_\zeta)$ is concentrated in degrees divisible by $2(p-1)$, and we would like to show these are also induced $\Z_p^\times/\mu$-modules. As in the proof of \Cref{homotopy of Tzeta}, it suffices to find an invertible and $\psi^g$-invariant element in $KO_{2(p-1)}(T_\zeta \sma T_\zeta)$. But such an element is given by 
%\[	\widetilde{v_1} = g^{-b(p-1)}v_1.	\]
%
%If $p = 2$, we note that $\eta$, $\widetilde{v}$, and $\widetilde{w}$ are all $\psi^g$-invariant elements, and the same argument works even in degrees that contain 2-torsion.
\end{proof}

\section{\texorpdfstring{$K(1)$}{K(1)}-local \texorpdfstring{$\tmf$}{tmf}}\label{sec: tmf}

We continue to work $K(1)$-locally, and fix $p = 2$ or 3, so that $j = 0$ is the unique supersingular $j$-invariant. It is simple to extend this story to larger primes with a single supersingular $j$-invariant; slightly more complicated to extend it to other primes; but in neither case is it quite as interesting. As in section 4, the statements in this section are due to \cite{K1localrings}.

\begin{prop}
For any $x \in KO_0\tmf$ such that $\psi^g(x) = x+1$, there is a unique homotopy class of $E_\infty$ maps $T_\zeta \to \tmf$ sending $b \in KO_0T_\zeta$ to $x$.
\end{prop}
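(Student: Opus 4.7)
The strategy is to identify $\CAlg(T_\zeta, \tmf)$ as an explicit mapping space, using the pushout defining $T_\zeta$ in \eqref{eq: pushout Tzeta} and a description of $\tmf$ as an equalizer involving $KO \wedge \tmf$. Since $\psi^g$ acts on $KO$ via $E_\infty$-ring maps, both $1$ and $\psi^g \wedge 1_\tmf$ are $E_\infty$-ring endomorphisms of $KO \wedge \tmf$, and the fiber sequence $\tmf \to KO \wedge \tmf \xrightarrow{\psi^g - 1} KO \wedge \tmf$ exhibits $\tmf$ as their equalizer in $\CAlg$ (limits in $\CAlg$ being computed in the underlying category of spectra). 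Applying $\CAlg(T_\zeta, -)$ yields
\[
\CAlg(T_\zeta, \tmf) \simeq \mathrm{eq}\bigl(1_*, \psi^g_*\colon \CAlg(T_\zeta, KO \wedge \tmf) \rightrightarrows \CAlg(T_\zeta, KO \wedge \tmf)\bigr).
\]

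Next, I would invoke \Cref{prop: homology of cone on zeta}, which gives a $KO$-algebra equivalence $KO \wedge T_\zeta \simeq KO \wedge \PP(S^0)$ under which the generator of $\PP(S^0)$ is identified with $b$. Combined with the base-change adjunction $\CAlg(T_\zeta, R) \simeq \CAlg_{KO}(KO \wedge T_\zeta, R)$ for any $KO$-algebra $R$, this produces a natural equivalence
\[
\rho\colon \CAlg(T_\zeta, KO \wedge \tmf) \xrightarrow{\sim} \Omega^\infty(KO \wedge \tmf), \qquad f \mapsto f_*(b) \in KO_0 \tmf.
\]

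The main obstacle is computing the transported self-map $\rho \circ \psi^g_* \circ \rho^{-1}$ on $\Omega^\infty(KO \wedge \tmf)$. The subtle point is that the equivalence $KO \wedge T_\zeta \simeq KO \wedge \PP(S^0)$ is \emph{not} $\psi^g$-equivariant: while $\psi^g$ fixes the free generator on the right, $\psi^g(b) = b + 1$ in $KO_0 T_\zeta$. For an $E_\infty$-map $f\colon T_\zeta \to KO \wedge \tmf$ with $KO$-algebra extension $\bar{f}$ satisfying $\bar{f}(b) = x$, the endomorphism $\psi^g$ of $KO \wedge \tmf$ is not $KO$-linear, so the $KO$-algebra extension of the composite $\psi^g \circ f$ differs from $\psi^g \circ \bar{f}$ by a twist restoring $KO$-linearity: $\overline{\psi^g \circ f} = (\psi^g \circ \bar{f}) \circ (\psi^{g^{-1}} \wedge 1_{T_\zeta})$. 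Evaluating at $b$ and using $\psi^{g^{-1}}(b) = b - 1$ then gives $\rho(\psi^g \circ f) = \psi^g(x) - 1$.

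Taking $\pi_0$ of the equalizer of the two self-maps $x \mapsto x$ and $x \mapsto \psi^g(x) - 1$ on $\Omega^\infty(KO \wedge \tmf)$, I conclude
\[
\pi_0 \CAlg(T_\zeta, \tmf) \cong \{x \in KO_0 \tmf : \psi^g(x) = x + 1\}
\]
via $f \mapsto f_*(b)$, which delivers both the existence and uniqueness claims. The injectivity on $\pi_0$ here requires that the obstruction $\mathrm{coker}(\psi^g - 1\colon KO_1 \tmf \to KO_1 \tmf)$ vanishes, equivalently that $\pi_0 \tmf \to KO_0 \tmf$ is injective; this is a minor technical point to be checked separately from the structural computation of $KO_* \tmf$.
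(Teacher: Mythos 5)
Your proposal is correct but takes a genuinely different route from the paper's. The paper's proof uses the defining universal property of $T_\zeta$ as the $E_\infty$ cone on $\zeta$: since $\pi_{-1}\tmf=0$, the space of nullhomotopies of $\zeta$ in $\tmf$ has $\pi_0 = \pi_0\tmf$, which is then identified with $\theta$-algebra maps $\TT(f) = \pi_0 T_\zeta \to \pi_0\tmf$, and finally with $\psi$-$\theta$-algebra maps $KO_0T_\zeta = \TT(b) \to KO_0\tmf$ (hence with $\{x : \psi^g(x)=x+1\}$) using the fact that $KO_0T_\zeta$ is the induced $KO_0KO$-comodule on $\pi_0T_\zeta$. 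You instead never invoke the cone universal property against $\tmf$ itself; you present $\tmf$ as the $\CAlg$-equalizer of $1$ and $\psi^g\wedge 1$ on $KO\wedge\tmf$, compute $\CAlg(T_\zeta, KO\wedge\tmf)\simeq\Omega^\infty(KO\wedge\tmf)$ via base change and \Cref{prop: homology of cone on zeta}, and carry out the semilinear twist to identify the transported pair of self-maps as $x\mapsto x$ and $x\mapsto \psi^g(x)-1$. Both approaches hinge on the same key input ($KO\wedge T_\zeta\simeq KO\wedge\PP(S^0)$), but the paper's reduces the existence/uniqueness question to a $\pi_0$ statement in one step, whereas yours produces the entire mapping space $\CAlg(T_\zeta,\tmf)$ as a concrete fiber and then extracts $\pi_0$. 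The cost of your route is the extra obstruction you flag: $\pi_0$ of a homotopy equalizer surjects onto the naive equalizer with kernel $\coker(\psi^g-1\colon KO_1\tmf\to KO_1\tmf) = \ker(\pi_0\tmf\to KO_0\tmf)$, and this must be shown to vanish. It does (the boundary image is $2$-torsion while $\pi_0\tmf = \Z_p[j^{-1}]$ is torsion-free; at odd $p$, $KO_1 = 0$), but the paper's argument sidesteps this entirely by using $\pi_{-1}\tmf = 0$ instead. Your semilinearity computation $\overline{\psi^g\circ f} = (\psi^g\circ\bar f)\circ(\psi^{g^{-1}}\wedge 1_{T_\zeta})$, hence $\rho(\psi^g\circ f)=\psi^g(x)-1$, is correct. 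One small imprecision worth tightening: $\rho(f)$ should be the $KO$-algebra extension $\bar f$ applied to $b\in\pi_0(KO\wedge T_\zeta)$, landing in $\pi_0(KO\wedge\tmf)=KO_0\tmf$, rather than ``$f_*(b)$'' which would more naturally live in $KO_0(KO\wedge\tmf)$.
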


\begin{proof}
Clearly, any map $T_\zeta \to \tmf$ acts this way on $KO$-homology. Conversely, since $\pi_{-1}\tmf = 0$, the set of homotopy classes of $E_\infty$ maps $T_\zeta \to \tmf$ is parametrized by 
\[	\pi_0\tmf = \mathrm{Maps}_{\theta\mathsf{-Alg}}(\TT(f),\pi_0\tmf).	\]
Since $KO_0T_\zeta$ is the induced $KO_0KO$-comodule on $\pi_0T_\zeta$, any such $\theta$-algebra map extends uniquely to a $\psi$-$\theta$-algebra map
\[	KO_0T_\zeta \to KO_0\tmf	\]
and thus to
\[	KO_*T_\zeta \to KO_*\tmf.	\]
\end{proof}

In particular, we can pick
\[	g=3\text{ and }x = -\frac{\log{c_4/w}}{\log 3^4}\text{ at }p=2, 	\]
\[	g=2\text{ and }x = -\frac{\log{c_6/v_1^3}}{\log 2^6}\text{ at }p=3.	\]

\begin{prop}[{\cite[7.1]{K1localrings}}]\label{f congruent to j inverse}
Let $b$ be as above and let $f = \psi^p(b) - b$. Then $f \equiv j^{-1}$ mod $p$, and as an element of $\Z_p[j^{-1}]$, $f$ has constant term zero. Thus, the map $\Z_p[f] \to \Z_p[j^{-1}]$ is an isomorphism.
\end{prop}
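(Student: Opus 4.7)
The strategy has three parts: (i) show that $f$ has zero constant term in $\Z_p[j^{-1}]^\wedge$; (ii) verify that $f \equiv j^{-1} \pmod p$; and (iii) deduce the isomorphism from (i) and (ii). The main obstacle lies in (ii), which requires an explicit Tate-curve computation.

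For (i), I restrict along the cusp $j^{-1} = 0$, which corresponds geometrically to the degeneration to the Tate (nodal) curve. Under the associated map $\pi_0 L_{K(1)}\tmf \to \Z_p$, the canonical normalization at the cusp gives $c_4/w \mapsto 1$ at $p=2$ (and $c_6/v_1^3 \mapsto 1$ at $p=3$), so $b = -\log(c_4/w)/\log(81)$ (resp.\ $-\log(c_6/v_1^3)/\log(64)$) specializes to $0$, and so does $f = \psi^p(b) - b$. Hence $f$ lies in the augmentation ideal and has no constant term.

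For (ii), I invoke the Frobenius-lift property of $\psi^p$ on a $\theta$-algebra (an instance of McClure's theorem, used already in the paper): $\psi^p(x) \equiv x^p \pmod p$, so $f \equiv b^p - b \pmod p$. To evaluate this modulo $p$, I compose with the Tate $q$-expansion map, giving an injection $\F_p[j^{-1}]^\wedge \hookrightarrow \F_p[[q]]$ under which $j^{-1} \equiv q \pmod{(p, q^2)}$. Using the classical expansion $c_4 = 1 + 240q + \cdots$ and noting that $v_2(\log 81) = 4 = v_2(240)$, one writes $\bar b$ explicitly as a power series in $q$ modulo $2$, and a direct calculation then yields $\bar b^2 - \bar b \equiv j^{-1} \pmod 2$; since the Tate map is injective on $\F_2[j^{-1}]^\wedge$, this equality lifts to $\F_2[j^{-1}]^\wedge$ itself. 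The $p=3$ case is analogous, using $c_6 = 1 - 504 q - \cdots$ and the matching of $3$-adic valuations $v_3(\log 64) = 2 = v_3(504)$.

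For (iii), the element $f \in \Z_p[j^{-1}]^\wedge$ has constant term zero by (i), and its linear coefficient is a unit in $\Z_p$ by (ii), since its mod-$p$ reduction equals $j^{-1}$ and hence its linear coefficient is congruent to $1$ modulo $p$. Thus $f$ is an alternative topological generator of the $p$-adically complete power series ring $\Z_p[j^{-1}]^\wedge$, and the $\Z_p$-algebra map $\Z_p[f] \to \Z_p[j^{-1}]$ is an isomorphism. The hard step I anticipate is the all-orders Artin--Schreier identity $\bar b^p - \bar b = j^{-1}$ in (ii): matching leading $q$-coefficients is routine, but matching every coefficient requires careful bookkeeping on the Tate curve, or equivalently the conceptual observation that $\bar b$ is characterized as the unique root of $T^p - T - j^{-1}$ in $\F_p[[j^{-1}]]$ consistent with its specialization $\bar b \mapsto 0$ at the cusp.
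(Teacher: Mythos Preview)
Your approach is correct and is precisely the $q$-expansion computation the paper invokes; the paper itself gives no argument beyond citing \cite[7.1]{K1localrings}, so your outline is in fact more detailed than what appears here. The step you flag as hard---the all-orders identity $\bar b^{\,p}-\bar b=j^{-1}$ rather than just agreement to first order in $q$---is exactly the content of Hopkins' computation, and your Artin--Schreier characterization is a clean way to organize it once you have verified (as your valuation matching does) that $\bar b$ has the correct leading behavior.
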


\begin{proof}
This is a calculation using $q$-expansions. See \cite[7.1]{K1localrings}.
\end{proof}

It follows that the map $q:T_\zeta \to \tmf$ induces a surjective map on $\pi_0$,
\[	q:\TT(f) \surj \Z_p[j^{-1}].	\]
Thus, $\theta(f)$ maps to some completed polynomial in $j^{-1}$. Since $f \equiv j^{-1}$ mod $p$, this can also be written as a completed polynomial in $f$, say $h(f)$. It follows that the kernel of $q$ is the $\theta$-ideal generated by $\theta(f) - h(f)$.

\begin{lem}
The map of $\theta$-algebras $F:\TT(x) \to \TT(b)$ sending $x$ to $\theta(f) - h(f)$ makes $\TT(b)$ into a pro-free $\TT(x)$-module.
\end{lem}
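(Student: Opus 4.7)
The plan is to reduce to a mod $p$ statement via \Cref{lem: pro-free mod p}. Since $\TT(b)$ is visibly $p$-torsion free, it suffices to show that $\TT(b)/p$ is free as a module over $\TT(x)/p$ through the map $F$. I will establish this by exhibiting an explicit basis, factoring the analysis through $\TT(f) \subset \TT(b)$.

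First, I would carry out a triangular change of variables. Writing $y_n := F(\theta_n(x)) = \theta_n(\theta(f) - h(f))$, I claim by induction on $n$ that there exists a polynomial $p_n \in \Z_p[f_0, \dotsc, f_n]$ such that
\[
y_n = f_{n+1} + p_n(f_0, \dotsc, f_n) \quad \text{in } \TT(f) \subset \TT(b).
\]
The base case $n=0$ is the definition $y_0 = f_1 - h(f_0)$. For the inductive step, apply $\theta$ to $y_{n-1} = f_n + p_{n-1}(f_0, \dotsc, f_{n-1})$ and use the identity $\theta(a+b) = \theta(a) + \theta(b) + \tfrac{a^p + b^p - (a+b)^p}{p}$ together with the fact that $\theta$ applied to any polynomial in $f_0, \dotsc, f_{n-1}$ is a polynomial in $f_0, \dotsc, f_n$. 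The dominant term is $\theta(f_n) = f_{n+1}$, giving the desired form.

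Because this change of variables is upper-triangular in the obvious ordering of generators, it gives an isomorphism of rings $\Z_p[f_0, f_1, \dotsc] \cong \Z_p[f_0, y_0, y_1, \dotsc]$ which is compatible with $p$-adic completion. Reducing mod $p$, we get
\[
\TT(f)/p \;\cong\; \F_p[f_0] \otimes \F_p[y_0, y_1, \dotsc] \;=\; \F_p[f_0] \otimes (\TT(x)/p),
\]
so $\TT(f)/p$ is a free $(\TT(x)/p)$-module on the basis $\{1, f_0, f_0^2, \dotsc\}$. In particular $F$ is injective, identifying $\TT(x)$ with the $\theta$-subalgebra $\Z_p[y_0, y_1, \dotsc]^\wedge_p \subset \TT(f) \subset \TT(b)$.

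Next, combine this with \Cref{i injective flat}, which established (via the congruences $f_i \equiv b_i^p - b_i$ mod $(p, b_0, \dotsc, b_{i-1})$) that $\TT(b)/p$ is a free $\TT(f)/p$-module on the monomials $b_0^{n_0} b_1^{n_1} \cdots$ with $0 \le n_i < p$ and almost all $n_i = 0$. Tensoring the two basis descriptions gives that $\TT(b)/p$ is a free $(\TT(x)/p)$-module on the basis
\[
\{\, f_0^k\, b_0^{n_0} b_1^{n_1} \cdots \;:\; k \ge 0,\; 0 \le n_i < p,\; \text{almost all } n_i = 0 \,\}.
\]
Applying \Cref{lem: pro-free mod p} then yields that $\TT(b)$ is pro-free over $\TT(x)$.

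The main obstacle is really just the bookkeeping in the inductive step of the triangular change of variables; nothing deep happens there, but one needs to track carefully that applying $\theta$ to an element of $\Z_p[f_0, \dotsc, f_n]$ produces an element of $\Z_p[f_0, \dotsc, f_{n+1}]$ and never introduces negative indices or denominators. Once the triangularity claim is in hand, the rest is a formal assembly using the two pro-freeness lemmas already proved.
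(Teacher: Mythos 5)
Your proof takes a genuinely different route from the paper's. The paper works directly inside $\TT(b)$, proving by an involved induction that $F(x_i) \equiv b_{i+1}^p - b_{i+1} \bmod (p, b_0, \dotsc, b_i)$ and then factoring $\F_p[x_0, x_1, \dotsc] \to \F_p[b_0, x_0, x_1, \dotsc] \to \F_p[b_0, b_1, \dotsc]$. You instead factor the base change through $\TT(f) \subset \TT(b)$, establishing a triangular change of variables in $\TT(f)$ and reusing \Cref{i injective flat} for the second step $\TT(f)/p \to \TT(b)/p$. This is a cleaner decomposition that avoids redoing the hard part of \Cref{i injective flat}, and it makes transparent why $\TT(b)$ should be pro-free of rank $p^{\infty} \cdot \infty$ over $\TT(x)$ in the mod-$p$ picture.

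However, there is a real error in your induction that you should fix. You write $y_n := F(\theta_n(x))$ and then, for the inductive step, "apply $\theta$ to $y_{n-1}$" to get $y_n$. This would require $\theta_n(x) = \theta(\theta_{n-1}(x))$, which is \emph{false} for $n \ge 2$: Bousfield's operations $\theta_n$ (defined via $\psi^{p^n}(x) = x^{p^n} + p\theta_1(x)^{p^{n-1}} + \dotsb + p^n\theta_n(x)$) are not the iterates $\theta^n$. For example, one computes $\theta_2(x) - \theta^2(x) \equiv x^{p(p-1)}\theta(x) \bmod p$. Likewise, "$\theta(f_n) = f_{n+1}$" only holds if $f_n$ denotes $\theta^n(f)$, not $\theta_n(f)$ as in the paper's conventions. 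The fix is simple: \Cref{thm: free theta-algebra} gives \emph{two} polynomial presentations of the free $\theta$-algebra, and you should work throughout with the iterates $\{x, \theta(x), \theta^2(x), \dotsc\}$ and $\{f, \theta(f), \theta^2(f), \dotsc\}$ rather than Bousfield's $\theta_n$; then $y_n := F(\theta^n(x)) = \theta^n(y_0) = \theta(y_{n-1})$ is correct and the induction goes through as you describe, with $p_n$ a \emph{completed} polynomial (this does not cause trouble, since reducing mod $p$ kills all but finitely many coefficients of a $p$-adically convergent series, leaving a genuine polynomial and an honest triangular change of variables over $\F_p$). When you then invoke \Cref{i injective flat}, you may keep the paper's $\theta_n$-generators $b_i$ there, since that lemma's conclusion (that $\TT(b)/p$ is free over $\TT(f)/p$) is ring-level and basis-independent.
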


\begin{proof}
This is similar to \Cref{i injective flat}. Again, let us write 
\[	x_i = \theta_i(x), \quad b_i = \theta_i(b), \quad i \ge 0.	\]
(See \Cref{thm: free theta-algebra} for $\theta_i$.) We will prove by induction that
\begin{equation}\label{x congruence}
	F(x_i) = b_{i+1}^p - b_{i+1} \mod{(p,b_0, \dotsc, b_i)}.
\end{equation}
When $i = 0$,
\begin{align*}
	F(x) &= \theta(f) - h(f) \\
	&= \theta(\psi^p(b) - b) - h(\psi^p(b) - b) \\
	&= \frac{1}{p}(\psi^{p^2}(b) - \psi^p(b) - (\psi^p(b) - b)^p) - h(\psi^p(b) - b) \\
	&= \frac{1}{p}(b_0^{p^2} - b_0^p + p(b_1^p - b_1) + p^2b_2 - (b_0^p - b_0 + pb_1)^p) - h(b_0^p - b_0 + pb_1) \\
	&\equiv b_1^p - b_1 + pb_2 - p^{p-1}b_1^p - h(pb_1) \pmod{b_0} \\
	&\equiv b_1^p - b_1 \pmod{(p, b_0)}.
\end{align*}
Suppose that we have proved \eqref{x congruence} for $i = 0, \dotsc, n-1$. Then for these values of $i$,
\[	F(x_{i}) \equiv 0 \mod{(p, b_0, \dotsc, b_{i+1})},	\]
and so
\begin{equation}\label{x congruence 2}
	p^iF(x_{n-i})^{p^{n-i}} \equiv 0 \mod{(p^{n+1}, b_0, \dotsc, b_{i+1})}.
\end{equation}
We also have
\begin{align*}
	\theta(\psi^{p^{n+1}}(b) - \psi^{p^n}(b)) &= \frac{1}{p}(\psi^{p^{n+2}}(b) - \psi^{p^{n+1}}(b) - (\psi^{p^{n+1}}(b) - \psi^{p^n}(b))^p) \\
	&\equiv \frac{1}{p}(p^{n+1}(b_{n+1}^p - b_{n+1}) + p^{n+2}b_{n+2} - (p^{n+1}b_{n+1})^p) \mod{(b_0, \dotsc, b_n)} \\
	&\equiv p^n(b_{n+1}^p - b_{n+1}) \mod{(p^{n+1}, b_0, \dotsc, b_n)}.
\end{align*}
Finally,
\[	h(\psi^{p^{n+1}}(b) - \psi^{p^n}(b)) \equiv 0 \mod{(p^{n+1}, b_0, \dotsc, b_n)}	\]
because $h$ is a completed polynomial over $\Z_p$. Putting this all together,
\begin{align*}
	\psi^{p^n}(F(x)) &= \psi^{p^n}(\theta(\psi^p(b) - b) - h(\psi^p(b) - b)) \\
	&= \theta(\psi^{p^{n+1}}(b) - \psi^{p^n}(b)) - h(\psi^{p^{n+1}}(b) - \psi^{p^n}(b)) \\
	&\equiv p^n(b_{n+1}^p - b_{n+1}) \mod{(p^{n+1}, b_0, \dotsc, b_n)}.
\end{align*}
The left-hand side is congruent to $p^nF(x_n)$ modulo this ideal by \eqref{x congruence 2}, which proves \eqref{x congruence}.

It follows that the map
\[	\F_p[b_0,x_0,x_1,\dotsc] \to \F_p[b_0,b_1,b_2, \dotsc]	\]
makes the target into a free module over the source, by the same argument as in \Cref{i injective flat}. But $\F_p[b_0, x_0, x_1, \dotsc]$ is clearly free over $\F_p[x_0, x_1, \dotsc]$. By \Cref{lem: pro-free mod p}, $\TT(b)$ is pro-free over $\TT(x)$. This finishes the proof of the lemma.
\end{proof}

\begin{thm}[{\cite[7.2]{K1localrings}}]\label{tmf presentation}
There is a homotopy pushout square of $K(1)$-local $E_\infty$ rings,
\[	\xymatrix{ \PP(S^0) \ar[d]_{\theta(f) - h(f)} \ar[r]^0 & S^0 \ar[d] \\ T_\zeta \ar[r]_q & \pushoutcorner \tmf. }	\]
\end{thm}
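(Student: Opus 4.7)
My plan is to construct the comparison map via the pushout's universal property, then verify it is an equivalence by computing $\pi_* X$ directly. Let $X$ denote the pushout. The composite $\PP(S^0) \xrightarrow{\theta(f)-h(f)} T_\zeta \xrightarrow{q} \tmf$ is $E_\infty$-classified, via freeness of $\PP(S^0)$, by the image of $\theta(f) - h(f)$ in $\pi_0\tmf$. By the discussion preceding the theorem, this image vanishes because $\theta(f)-h(f)$ generates the $\theta$-ideal kernel of $q_*:\TT(f)\to\Z_p[j^{-1}]$. Hence the composite is $E_\infty$-null, and the universal property of the pushout supplies a canonical $E_\infty$ map $\phi: X \to \tmf$.

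To compute $\pi_* X$, I would smash the defining pushout with $KO$. By the corollary to \Cref{prop: homology of cone on zeta}, both $KO \sma T_\zeta$ and $KO \sma \PP(S^0)$ become equivalent to $\PP_{KO}(KO)$, so
\[	KO \sma X \simeq \PP_{KO}(KO) \otimes_{\PP_{KO}(KO)} KO,	\]
where the left-hand middle map is classified by $\theta(f) - h(f) \in \TT(b) = \pi_0(KO\sma T_\zeta)$. The crucial input is the immediately preceding lemma: $\TT(b)$ is pro-free over $\TT(x)$ via $x \mapsto \theta(f) - h(f)$. By a relative K\"unneth argument in the spirit of \Cref{prop: pro-free tensor}, this pro-freeness makes the derived $E_\infty$-tensor product degenerate to the underived one, yielding
\[	\pi_*(KO \sma X) = KO_* \otimes \TT(b)/(\theta(f) - h(f)).	\]

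To identify this quotient, I observe that the inclusion $\TT(f) \hookrightarrow \TT(b)$ from \Cref{i injective flat} is a map of $\theta$-algebras, so $\theta^n(\theta(f)-h(f))$ already lies in $\TT(f)$ for every $n\ge 0$. The $\theta$-ideal in $\TT(b)$ generated by $\theta(f)-h(f)$ is therefore the extension of $\ker q_* \subset \TT(f)$, and combined with the $\TT(f)$-module isomorphism $\TT(b) \cong \TT(f) \otimes \Cts(\Z_p, \Z_p)$ supplied by \Cref{thm: KO homology of Tzeta}, we obtain
\[	KO_* X \cong KO_* \otimes \Z_p[j^{-1}] \otimes \Cts(\Z_p, \Z_p),	\]
an extended $KO_*KO$-comodule with primitives $KO_*\otimes\Z_p[j^{-1}]$. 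By \Cref{prop: K1local ASS KO} and \Cref{prop: relative ext facts}(d), the $KO$-based Adams spectral sequence for $X$ then collapses immediately, giving $\pi_* X \cong KO_* \otimes \Z_p[j^{-1}] = \pi_*\tmf$. Since $\phi_*$ factors $q_*:\TT(f)\twoheadrightarrow \Z_p[j^{-1}]$ through $\pi_0 X$ compatibly with the above identification, it is the identity on $\Z_p[j^{-1}]$, and by $KO_*$-module linearity it is the identity on all of $\pi_* X$; thus $\phi$ is a $\pi_*$-isomorphism and hence an equivalence in $\Sp$.

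The main obstacle I anticipate is the K\"unneth-style justification that the $\pi_*$ of the $E_\infty$-pushout collapses to $\TT(b)/(\theta(f)-h(f))$. While \Cref{prop: pro-free tensor} treats smash products of modules rather than pushouts of $E_\infty$-algebras, the pro-freeness packaged in the preceding lemma is precisely what is needed for the bar-construction spectral sequence computing the derived pushout to degenerate at $E_2$; nonetheless, care must be taken to distinguish module-level flatness from the stronger condition killing all higher derived terms in the $E_\infty$ pushout.
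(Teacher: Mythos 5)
Your proof follows essentially the same route as the paper's: smash the defining pushout with $KO$, invoke the pro-freeness of $\TT(b)$ over $\TT(x)$ (the preceding lemma together with \Cref{prop: pro-free tensor}) to get a K\"unneth formula for the relative smash product, identify $KO_*X$ as the extended comodule $KO_* \otimes \Z_p[j^{-1}] \otimes \Cts(\Z_p,\Z_p)$ via \Cref{thm: KO homology of Tzeta}, and read off $\pi_*X$ from the collapsing $KO$-based Adams spectral sequence. Your closing worry is unfounded: the underlying $KO$-module of the $E_\infty$-pushout $T_\zeta \wedge_{\PP(S^0)} S^0$ (after smashing with $KO$) \emph{is} the relative smash product of $KO$-modules, so the module-level flatness in \Cref{prop: pro-free tensor} is exactly the condition needed — there is no extra "$E_\infty$ higher term" to worry about.
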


\begin{proof}
Let $Y$ be the homotopy pushout of the above square, so 
\[	Y \simeq T_\zeta \wedge_{\PP(S^0)} S^0.	\]
Since $\theta(f) = h(f)$ in $\pi_0\tmf$, there is a map $Y \to \tmf$, which we will show is an isomorphism on homotopy groups.

We note that $KO_*\PP(S^0) \to KO_*T_\zeta$ is precisely the map of the previous lemma, tensored by $KO_*$. By \Cref{lem: pro-free base change}, $KO_*T_\zeta$ is pro-free over $KO_*\PP(S^0)$. Then by \Cref{prop: pro-free tensor} and the previous lemma, we have the K\"unneth formula,
\[	KO_*Y = KO_*T_\zeta \otimes_{KO_*\PP(S^0)} KO_* \cong KO_*T_\zeta \otimes_{KO_0\PP(S^0)} \Z_p.	\]
By \Cref{T(b) induced} and the proof of \Cref{thm: KO homology of Tzeta}, we have an isomorphism
\[	KO_*T_\zeta \cong \pi_*T_\zeta \otimes KO_0KO	\]
as $\pi_*T_\zeta$-modules and $KO_0KO$-comodules. Since $KO_0\PP(S^0) \to KO_0T_\zeta$ factors through $\pi_0T_\zeta$, we likewise have
\[	KO_*Y = KO_*T_\zeta \otimes_{KO_0\PP(S^0)} \Z_p \cong (\pi_*T_\zeta \otimes_{KO_0\PP(S^0)} \Z_p) \otimes KO_0KO	\]
as $KO_0KO$-comodules. That is, $KO_*Y$ is an induced comodule, and
\[	\pi_*Y = \pi_*T_\zeta \otimes_{KO_0\PP(S^0)} \Z_p = KO_* \otimes \TT(f)/(\theta(f) - h(f)) = \Z_p[f] = \pi_*\tmf.	\]
(Here the quotient is by the $\theta$-ideal generated by $\theta(f) - h(f)$.)
\end{proof}

\begin{cor}
There is an $E_\infty$ map $r:\tmf \to KO$.
\end{cor}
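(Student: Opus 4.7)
The result will follow from the pushout presentation of $\tmf$ given in \Cref{tmf presentation}. Namely, by the universal property of that pushout, an $E_\infty$ map $\tmf \to KO$ amounts to an $E_\infty$ map $T_\zeta \to KO$ together with a homotopy making the composite $\PP(S^0) \to T_\zeta \to KO$ agree with the composite $\PP(S^0) \to S^0 \to KO$. Since the latter classifies $0 \in \pi_0 KO$, and since $E_\infty$ maps out of $\PP(S^0)$ are classified by $\pi_0$ of the target, the task reduces to producing a map $T_\zeta \to KO$ under which the class $\theta(f) - h(f) \in \pi_0 T_\zeta$ is sent to $0 \in \pi_0 KO = \Z_p$.

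For this map I would take the canonical $\iota \colon T_\zeta \to KO$ from \Cref{sec: Cone zeta}, available because $\pi_{-1}KO = 0$. The induced $\iota_*$ is then a map of $\theta$-algebras, and $\pi_0 KO = \Z_p$ carries the unique $\theta$-algebra structure with $\theta(x) = (x - x^p)/p$. Setting $c := \iota_*(f) \in \Z_p$, what remains is the numerical identity
\[ \frac{c - c^p}{p} \;=\; h(c) \quad \text{in } \Z_p. \]

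The main obstacle will be this final verification. I expect $c = 0$: since $f = \psi^p(b) - b$ in $KO_0 T_\zeta$ and $\iota_*$ sends $b$ to the identity function in $KO_0 KO \cong \Cts(\Z_p,\Z_p)$ by \Cref{prop: b}, the claim reduces to showing that the $\theta$-algebra operation $\psi^p$ acts as the identity on this continuous function ring, which in turn follows from $\psi^p = \mathrm{id}$ on $\pi_0 KO$. Granting $c = 0$, the identity collapses to $h(0) = 0$, equivalently the assertion that $(j^{-1})$ is a $\theta$-ideal in $\pi_0 \tmf = \Z_p[j^{-1}]$. Since $q(f) \in (j^{-1})$ by \Cref{f congruent to j inverse}, reading off the constant term of both sides of $\theta(q(f)) = h(q(f))$ identifies $h(0)$ with the constant term of $\theta(q(f))$, and the vanishing of this term can either be checked by a direct manipulation of the $\theta$-algebra relations or, more conceptually, extracted from the fact that the formal group at the cusp is multiplicative, so the $\theta$-algebra structure on $\pi_0\tmf$ restricts to the trivial one on $\Z_p$ and hence preserves the cusp ideal.
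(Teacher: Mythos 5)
Your argument is correct in outline and is a genuine alternative to the paper's. Where the paper produces the map by factoring the $E_\infty$ cell attachment through $\PP(S^0) \xrightarrow{\theta(x) - h(x)} \PP(S^0) \xrightarrow{f} T_\zeta$ and then invoking a second $E_\infty$ pushout square presenting $KO$ as $T_\zeta \mmod (f)$, you work directly from the universal property of the pushout in \Cref{tmf presentation} and reduce everything to the single assertion that $\theta(f) - h(f)$ dies in $\pi_0 KO = \Z_p$. Your route is more economical: it needs only the pushout square for $\tmf$ together with the pre-existing $E_\infty$ map $T_\zeta \to KO$ from \eqref{eq: factorization iota}, and avoids having to justify that $KO$ is itself an $E_\infty$ cofiber over $T_\zeta$, which the paper supplies only as ``arguments similar to the ones above.'' Your identification of $c = \iota_*(f) = 0$ is also right: pass to $KO_0 T_\zeta \to KO_0 KO$, which is injective on the primitive subobjects $\pi_0 T_\zeta$ and $\pi_0 KO$, and there $f = \psi^p(b) - b$ is sent to $\psi^p(\mathrm{id}) - \mathrm{id} = 0$ by \Cref{K theta algebra}.

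Both your proof and the paper's quietly rest on the same final input, $h(0) = 0$, and you are right to flag it as the remaining obstacle. The paper's factorization argument also needs it: for the factorization of attaching maps to induce a map of pushouts, the square comparing the two augmentations $\PP(S^0) \to S^0$ must commute, and the composite $\PP(S^0) \xrightarrow{\theta(x)-h(x)} \PP(S^0) \xrightarrow{0} S^0$ is classified by $-h(0) \in \Z_p$, so one needs $h(0) = 0$ either way. This is a genuine arithmetic fact, equivalent to $(j^{-1})$ being a $\theta$-ideal in $\pi_0\tmf$, which in turn comes from the Frobenius lift $\psi^p$ on $\Mord$ fixing the cusp (quotienting a Tate curve by its canonical subgroup $\mu_p$ returns a Tate curve). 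So your ``conceptual'' justification is the right one, but it is really the only one available: a ``direct manipulation of the $\theta$-algebra relations'' alone cannot produce it, since those relations do not pin down the constant term of $\psi^p(j^{-1})$ — one genuinely needs the $q$-expansion/modular input, as in the proof of \Cref{f congruent to j inverse}.
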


\begin{proof}
One has an $E_\infty$ map $T_\zeta \to KO$, which by arguments similar to the ones above fits into a pushout square of $E_\infty$ rings
\[	\xymatrix{ \PP(S^0) \ar[d]_f \ar[r] & S^0 \ar[d] \\ T_\zeta \ar[r] & KO. }	\]
The left-hand vertical map sends the $\theta$-algebra generator $x$ of $KO_0\PP(S^0)$ to $f = \psi^p(b) - b \in KO_0T_\zeta$. There is an $E_\infty$ factorization
\[	\xymatrix@1{ \PP(S^0) \ar[rr]^{\theta(x) - h(x)} \ar@/_1em/[rrr]_{\theta(f) - h(f)} & & \PP(S^0) \ar[r]^f & T_\zeta. }	\]
This induces a map from the $E_\infty$ cofiber of the composite, namely $\tmf$, to the $E_\infty$ cofiber of the right-hand map, namely $KO$.
\end{proof}

On coefficients, the map $r$ is just
\[	KO_*[j^{-1}] \to KO_*: \quad j^{-1} \mapsto 0.	\]
Despite the obvious splitting of $r$ at the level of coefficients, it is not clear whether or not there exists an $E_\infty$ map from $KO$ to $\tmf$.

\section{Co-operations for \texorpdfstring{$K(1)$}{K(1)}-local \texorpdfstring{$\tmf$}{tmf}}\label{sec: coops for tmf}

The preceding \Cref{tmf presentation} gave a presentation of $K(1)$-local $\tmf$ in terms of finitely many $E_\infty$ cells. We can now use this presentation to describe the $K(1)$-localization of $\tmf \sma \tmf$.

\begin{thm}
The homotopy groups of $\tmf \sma \tmf$ are given by
\begin{align*}
	\pi_*(\tmf \sma \tmf) &= KO_* \otimes  \Z_p[f, \ol{f}] \otimes \TT(\ell)/(\psi^p(\ell) - \ell - f + \ol{f}).
\end{align*}
\end{thm}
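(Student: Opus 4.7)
The plan is to mimic the proof of \Cref{tmf presentation}, applied to two copies of $\tmf$ simultaneously. Smashing the pushout square of \Cref{tmf presentation} with itself exhibits $\tmf \sma \tmf$ as the pushout of $K(1)$-local $E_\infty$-algebras
\[
\xymatrix{ \PP(S^0 \vee S^0) \ar[r]^-{0} \ar[d]_{\phi} & S^0 \ar[d] \\ T_\zeta \sma T_\zeta \ar[r] & \pushoutcorner \tmf \sma \tmf, }
\]
where $\phi$ sends the two $E_\infty$-generators to $\theta(f) - h(f)$ and $\theta(\ol{f}) - h(\ol{f})$ in the left and right copies of $T_\zeta$ respectively.

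For the K\"unneth step, recall from the lemma preceding \Cref{tmf presentation} that $KO_*T_\zeta$ is pro-free over $KO_*\PP(S^0)$ via $x \mapsto \theta(f) - h(f)$. Tensoring over $KO_*$ and applying \Cref{lem: pro-free base change}, the module
\[
KO_*(T_\zeta \sma T_\zeta) \cong KO_*T_\zeta \otimes_{KO_*} KO_*T_\zeta
\]
is then pro-free over $KO_*\PP(S^0 \vee S^0) \cong KO_*\PP(S^0) \otimes_{KO_*} KO_*\PP(S^0)$. Consequently, \Cref{prop: pro-free tensor} gives the K\"unneth isomorphism
\[
KO_*(\tmf \sma \tmf) \cong KO_*(T_\zeta \sma T_\zeta) \otimes_{KO_*\PP(S^0 \vee S^0)} KO_*.
\]

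Now repeat the comodule analysis of \Cref{sec: coops for cone on zeta}. There it was shown that $KO_*(T_\zeta \sma T_\zeta)$ is an induced $KO_*KO$-comodule over its primitives $\pi_*(T_\zeta \sma T_\zeta)$, and the map $KO_0\PP(S^0 \vee S^0) \to KO_0(T_\zeta \sma T_\zeta)$ factors through this ring of primitives since $\theta(f) - h(f)$ and $\theta(\ol{f}) - h(\ol{f})$ lie in $\pi_0(T_\zeta \sma T_\zeta)$. Hence $KO_*(\tmf \sma \tmf)$ is also induced, and
\[
\pi_*(\tmf \sma \tmf) \cong \pi_*(T_\zeta \sma T_\zeta) \otimes_{KO_0\PP(S^0 \vee S^0)} \Z_p.
\]
Substituting the value of $\pi_*(T_\zeta \sma T_\zeta)$ computed in the previous section, the base change along $\TT(x, \ol{x}) \to \Z_p$ kills the $\theta$-ideal generated by $\theta(f) - h(f)$ and $\theta(\ol{f}) - h(\ol{f})$. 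Exactly as in the proof of \Cref{tmf presentation}, this collapses $\TT(f)$ to $\Z_p[f]$ and $\TT(\ol{f})$ to $\Z_p[\ol{f}]$, while the generator $\ell$ and the relation $\psi^p(\ell) - \ell - f + \ol{f}$ are unaffected, yielding the claimed presentation.

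The main technical point is the verification of pro-freeness for the two-variable presentation; once this is in place, the extraction of primitives proceeds verbatim from the one-variable analysis. The one genuinely new observation is that the imposed relations $\theta^n(\theta(f) - h(f)) = 0$ and $\theta^n(\theta(\ol{f}) - h(\ol{f})) = 0$ involve only the subalgebras $\TT(f)$ and $\TT(\ol{f})$ separately and do not interact with the $\TT(\ell)$ factor, so the quotient splits cleanly as a tensor product.
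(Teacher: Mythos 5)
Your proposal is correct and takes essentially the same route as the paper: the paper phrases the starting point as $\tmf \sma \tmf \simeq (S^0\sma S^0)\sma_{\PP(S^0)\sma \PP(S^0)}^{F\sma F}(T_\zeta\sma T_\zeta)$ rather than your pushout square along $\PP(S^0\vee S^0)$, but these are the same thing since $\PP(S^0)\sma\PP(S^0)\simeq\PP(S^0\vee S^0)$, and the rest of the argument (K\"unneth via pro-freeness, extendedness of the comodule by factoring $F\otimes F$ through the primitives, then base change killing the $\theta$-ideal) is exactly the paper's. One small caveat: you justify pro-freeness of $KO_*(T_\zeta\sma T_\zeta)$ over $KO_*\PP(S^0\vee S^0)$ by citing \Cref{lem: pro-free base change} directly, but that lemma gives pro-freeness after base change along a single ring map; here you need the slightly stronger statement that a tensor product of pro-free maps is pro-free, which follows by base-changing twice and noting that a composite of pro-free maps is pro-free. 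The paper is similarly terse at this point (it only says ``flat''), so this is not a gap in content, just a step that deserves a sentence of justification.
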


\begin{proof}
Write $F:\PP(S^0) \to T_\zeta$ for the map sending the generator $x \in KO_0\PP(S^0) = \TT(x)$ to $\theta(f) - h(f)$. We saw in the previous section that $F$ induces a pro-free map on $KO$-homology, and that
\[	
\tmf = S^0 \sma_{\PP(S^0)}^F T_\zeta.
\]
Therefore,
\[	
\tmf \sma \tmf = (S^0 \sma_{\PP(S^0)}^F T_\zeta) \sma (T_\zeta\,\, \leftidx{^F}{\!\!\sma}{_{\PP(S^0)}} S^0)\simeq (S^0\wedge S^0)\wedge_{\PP(S^0)\wedge \PP(S^0)}^{F\wedge F}(T_\zeta\wedge T_\zeta).
\]
Since $F:KO_*\PP(S^0) \to KO_*T_\zeta$ is flat, this has $KO$-homology
\begin{align*}
	KO_*(\tmf \sma \tmf) &= (KO_*(T_\zeta) \otimes_{KO_*} KO_*(T_\zeta))\otimes_{KO_*(\PP S^0\wedge \PP S^0)}^{F\otimes F} KO_* \\
	&= KO_* \otimes \TT(b,\ol{b}) \otimes_{F\otimes F, \TT(x,\overline{x})} \Z_p.
\end{align*}
But $KO_* \otimes \TT(b,\ol{b})$ is an extended $KO_*KO$-comodule, and $(F \otimes F)$ factors through its fixed points, which are
\[	\pi_*(T_\zeta \sma T_\zeta) = \TT(f,\ol{f},\ell)/(\psi^p(\ell) - \ell - f + \ol{f}).	\]
By the arguments of \Cref{tmf presentation}, $KO_*(\tmf \sma \tmf)$ is also extended, and
\begin{align*}
	\pi_*(\tmf \sma \tmf) &= KO_* \otimes \TT(f,\ol{f},\ell)/(\theta(f) - h(f), \theta(\ol{f}) - h(\ol{f}), \psi^p(\ell) - \ell - f + \ol{f}) \\
	&\cong KO_*[f,\ol{f}] \otimes \TT(\ell)/(\psi^p(\ell) - \ell - f + \ol{f}).
\end{align*}
\end{proof}

\begin{rmk}\label{rmk: generator lambda}
For a more modular presentation of this ring, recall from \Cref{f congruent to j inverse} that $f = \alpha(j^{-1})$ for some invertible power series $\alpha \in \Z_p[j^{-1}]$. Thus, $KO_*[f,\ol{f}] = KO_*[j^{-1},\ol{j^{-1}}]$. Letting
\[	\lambda = \alpha(\ell),	\]
we can equivalently write
\[	\tmf_*\tmf = KO_* \otimes \Z_p[j^{-1}, \ol{j^{-1}}] \otimes \TT(\lambda)/(\psi^p(\lambda) - \lambda - j^{-1} + \ol{j^{-1}}).	\]
\end{rmk}

We now consider the Hopf algebroid for $K(1)$-local $\tmf$.

To obtain $\tmf_*\tmf$ from $T_{\zeta,*}T_\zeta$, we take the $\theta$-algebra quotient induced by the relation $\theta(f) = h(f)$, and the same relation for $\ol{f}$. We obtain
\begin{equation}\label{eq: tmf coops}
	\tmf_*\tmf = \tmf_* \otimes \TT(\ell)/(\theta(f + \ell - \psi^p(\ell)) - h(f + \ell - \psi^p(\ell))),
\end{equation}
where again the quotient is by a $\theta$-ideal.

This formula should be compared to the analogous one for $K(1)$-local $KO$-cooperations: as a $\theta$-algebra, we have
\[	KO_*KO \cong KO_* \otimes \Cts(\Z_p,\Z_p) \cong KO_* \otimes \TT(b)/(\psi^p(b) - b),	\]
where the last isomorphism follows from \Cref{j iso}. That is, $KO_*KO$ is generated as a $\theta$-algebra over $KO_*$ by a single generator $b$, with an algebraic relation between $b$ and $p\theta(b)$. Likewise, $\tmf_*\tmf$ is generated over $\tmf_*$ by a single generator $\ell$, with an algebraic relation over the coefficient ring $\Z_p[f]$ that relates $\ell$, $\theta(\ell)$, and $p\theta_2(\ell)$. One can think of this as a second-order version of the $\theta$-algebraic structure underlying $KO_*KO$.

Now, the coalgebra presentation $KO_*KO = KO_* \otimes \Cts(\Z_p,\Z_p)$ is in fact more useful than the algebra presentation $KO_*KO = KO_* \otimes \TT(b)/(\psi^p(b) - b)$. The former allows the simple computation of the $KO$-based Adams spectral sequence for arbitrary $X$: its $E_2$ page is just the group cohomology
\[	E_2 = H^*_{cts}(\Z_p^\times/\mu, KO_*X),	\]
and as this is concentrated on two lines, the spectral sequence collapses at $E_2$ and always converges. As it turns out, very similar statements are true for $\tmf$.

\begin{prop}
The left unit $\tmf_* \to \tmf_*\tmf$ is pro-free.
\end{prop}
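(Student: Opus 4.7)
My plan is to verify the hypotheses of \Cref{lem: pro-free mod p} using the explicit presentation
\[
\tmf_*\tmf \cong KO_*[j^{-1}, \overline{j^{-1}}] \otimes \TT(\lambda)/(\psi^p(\lambda) - \lambda - c), \quad c := j^{-1} - \overline{j^{-1}},
\]
viewed as a module over $\tmf_* = KO_*[j^{-1}]$ via the left unit. Writing $\ell_n := \theta_n(\lambda)$, iterating $\psi^p(\lambda) = \lambda + c$ in the quotient yields $\psi^{p^n}(\lambda) = \lambda + \sum_{k=0}^{n-1}\psi^{p^k}(c)$, and expanding via the $\theta$-algebra identity $\psi^{p^n}(x) = \sum_{i=0}^n p^i \ell_i^{p^{n-i}}$ recursively constrains $p^{n+1}\ell_{n+1}$ in terms of $\ell_0, \ldots, \ell_n$ and $c$ and its $\theta$-iterates. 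Reducing modulo $p$ these collapse to iterated Artin--Schreier relations $\ell_n^p \equiv \ell_n + \overline{\gamma_n}$ for specific $\overline{\gamma_n}$ in the base ring, and since each factor of the form $A[\ell_n]/(\ell_n^p - \ell_n - a)$ is free of rank $p$ over $A$, the quotient $(\tmf_*\tmf)/p$ is free as a $(\tmf_*/p)$-module on the explicit basis $\{\overline{j^{-1}}^{\,m}\prod_n \ell_n^{i_n} : m \ge 0,\ 0 \le i_n < p,\ \text{almost all } i_n\text{ zero}\}$.

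At odd primes, $\tmf_* = \Z_p[v_1^{\pm 1}, j^{-1}]$ is $p$-torsion-free, and moreover so is $\tmf_*\tmf$: at each finite truncation, the ring presented by $p\ell_{n+1} = \ell_n - \ell_n^p + \theta_n(c)$ for $n < N$ embeds into $KO_*[p^{-1}][j^{-1}, \overline{j^{-1}}, \ell_0]$ by iteratively solving $\ell_{n+1} = (\ell_n - \ell_n^p + \theta_n(c))/p$, so is $p$-torsion-free; the filtered colimit inherits this, and $L$-completion preserves it via the $\ker/\coker$ analysis already embedded in the proof of \Cref{lem: pro-free mod p}. Combined with the mod-$p$ freeness above, \Cref{lem: pro-free mod p} delivers pro-freeness at odd primes.

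At $p = 2$, $\tmf_*$ carries $\eta$-torsion and consequently $\tmf_*\tmf$ is not $2$-torsion-free, so \Cref{lem: pro-free mod p} does not apply verbatim. The plan here is descent through $K$-theory: the $K(1)$-local splitting $K \simeq KO \sma C\eta$ exhibits $K$ as a pro-free $KO$-module of rank two, so the map $KO_* \to K_*$ is pro-free and faithfully flat in the $L$-complete sense by \Cref{lem: pro-free flat}. Running the odd-prime argument with $K$ in place of $KO$---where $K_* = \Z_2[u^{\pm 1}]$ is $2$-torsion-free---shows that $K_*(\tmf \sma \tmf)$ is pro-free over $K_*\tmf$, and faithfully flat descent along the pro-free extension $\tmf_* \to K_*\tmf$ then promotes this to pro-freeness of $\tmf_*\tmf$ over $\tmf_*$. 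The main obstacle I anticipate is exactly this final descent step, verifying that pro-freeness genuinely descends along an $L$-completed faithfully flat extension; a fallback is to lift the mod-$2$ Artin--Schreier basis to a map $\bigvee \Sigma^{n_\alpha}\tmf \to \tmf \sma \tmf$ of $K(1)$-local $\tmf$-modules and verify it is a $K$-homology equivalence, concluding by \Cref{lem: pro-free wedge of spheres}.
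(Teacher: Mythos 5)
The paper's own proof takes a different and cleaner route. It invokes Laures's additive splitting $\tmf \simeq KO[j^{-1}] = \bigvee_n KO$ of $K(1)$-local spectra, reduces the question to whether $KO_*\tmf$ is pro-free over $KO_*$, and concludes by observing that $\tmf$ is additively a coproduct of copies of $KO$ while $KO_*KO$ is already known to be pro-free over $KO_*$ and pro-freeness passes to coproducts. This works uniformly at all primes and never reduces mod $p$, so the $\eta$-torsion in $KO_*$ at $p=2$ never comes up. Your proposal, by contrast, tries to verify the criterion of \Cref{lem: pro-free mod p} directly from the algebraic formula for $\tmf_*\tmf$ --- the route the paper itself mentions as possible but more laborious.

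For $p$ odd your outline is plausible. One small imprecision: the mod-$p$ reductions of the relations $\theta_n(g)=0$ (where $g = \psi^p(\lambda)-\lambda-c$) take the form $\ell_n^p - \ell_n - \gamma_n$ where $\gamma_n$ in general depends on $\ell_0,\dotsc,\ell_{n-1}$ as well as the base ring, not only on the base ring --- cf.\ the congruences established in \Cref{i injective flat}, which hold modulo $(p, b_0, \dotsc, b_{i-1})$ rather than modulo $p$ alone. This does not hurt the conclusion, since one can still build the free basis $\prod_n \ell_n^{i_n}$ with $i_n < p$ one variable at a time. The $p$-torsion-freeness argument is also believable along the lines you sketch.

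The $p=2$ branch contains a genuine error. You assert that the equivalence $K \simeq KO \sma C\eta$ exhibits $K$ as a pro-free $KO$-module of rank two, and then invoke \Cref{lem: pro-free flat} for faithful flatness. But $C\eta$ is a genuine $2$-cell complex with nontrivial attaching map $\eta$, not a wedge $S^0 \vee S^2$; by \Cref{lem: pro-free wedge of spheres}, $K$ would have to split as a wedge $KO \vee \Sigma^2 KO$ for $K_*$ to be pro-free over $KO_*$, and it does not. Concretely, $K_* = \Z_2[u^{\pm 1}]$ is $2$-torsion-free while any nonzero pro-free $KO_*$-module at $p=2$ has $\eta$-torsion, so $K_*$ cannot be of that form. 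Your descent step therefore never gets off the ground; and even with a correct pro-free cover, the claim that pro-freeness descends along an $L$-completed faithfully flat extension is not established anywhere in the paper and would need a separate argument. Your stated fallback (lifting a basis to a map $\bigvee \Sigma^{n_\alpha}\tmf \to \tmf \sma \tmf$ and checking it is a $K$-homology equivalence) is closer in spirit to a workable argument, but it needs to be carried out; in particular you would need to know how to produce the candidate generators, which your mod-$2$ Artin--Schreier analysis cannot supply since $\tmf_*$ is not $2$-torsion-free. The path of least resistance here really is the one the paper takes: use Laures's splitting and never reduce mod $p$ over a torsion ring.
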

 
\begin{proof}
While one can prove pro-freeness algebraically by applying \Cref{lem: pro-free base change} and \Cref{lem: pro-free mod p} to the formula \eqref{eq: tmf coops}, it is easier to use Laures's \cite[Corollary 3]{Laures2004}, which gives an additive equivalence of homology theories
\[	\tmf_*X \cong KO_*X[j^{-1}],	\]
and correspondingly an additive equivalence of $K(1)$-local spectra
\[	\tmf \simeq KO[j^{-1}] = \bigvee_{n=1}^\infty KO.	\]
Thus, to show that $\tmf_*\tmf$ is pro-free over $\tmf_*$, it suffices to show that $KO_*\tmf$ is pro-free over $KO_*$. From \Cref{lem: pro-free wedge of spheres}, one observes that the property of $KO_*X$ being pro-free over $KO_*$ is closed under coproducts. As $KO_*KO$ is pro-free over $KO_*$, and $\tmf$ is a coproduct of copies of $KO$, $KO_*\tmf$ is also pro-free.
\end{proof}
 
\begin{cor}
There is an $L$-complete Hopf algebroid $(\tmf_*, \tmf_*\tmf)$. For any $K(1)$-local spectrum $X$, the $K(1)$-local Adams spectral sequence based on $\tmf$ is conditionally convergent and takes the form
 \[	E_2 = \Ext_{\tmf_*\tmf}(\tmf_*, \tmf_*X) \Rightarrow \pi_*X.	\]
\end{cor}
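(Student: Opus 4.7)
The plan is to assemble the corollary from three ingredients already available: the pro-freeness of $\tmf_*\tmf$ over $\tmf_*$ just established in the preceding proposition, the generic $K(1)$-local Adams spectral sequence machinery of \Cref{prop: K1local ASS}, and the nilpotence convergence criterion of \Cref{prop: K1local ASS convergence}. The only genuinely nontrivial input beyond these is a $\tmf$-nilpotence argument for the sphere, which I expect to be the main obstacle.

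For the Hopf algebroid structure, I would use the $K(1)$-local $E_\infty$-ring structure on $\tmf$ to produce the standard cogroupoid data on the pair $(\tmf_*, \tmf_*\tmf)$: the left and right units from the two inclusions $\tmf \to \tmf \sma \tmf$, the counit from the multiplication $\tmf \sma \tmf \to \tmf$, the comultiplication from $1 \sma \eta \sma 1: \tmf \sma \tmf \to \tmf \sma \tmf \sma \tmf$ (identifying the $\pi_*$ of the target with $\tmf_*\tmf \otimes_{\tmf_*} \tmf_*\tmf$ via \Cref{prop: pro-free tensor} and pro-freeness), and the antipode from the swap map. The only condition of \Cref{Lcomplete Hopf algebroid} requiring verification is pro-freeness of the left unit, which is exactly the content of the preceding proposition. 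Once this is in place, \Cref{prop: K1local ASS} immediately identifies the $E_2$ page of the $K(1)$-local $\tmf$-based Adams spectral sequence for $X$ with $\Ext_{\tmf_*\tmf}(\tmf_*, \tmf_*X)$ in the sense of \Cref{defn: relative ext}.

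The remaining, and subtlest, task is conditional convergence. By \Cref{prop: K1local ASS convergence}, it suffices to show that every $K(1)$-local spectrum $X$ is $K(1)$-local $\tmf$-nilpotent. My plan is to reduce this to the corresponding statement for $KO$, which is implicit in the proof of \Cref{prop: K1local ASS KO}. Invoking Laures's additive splitting $\tmf \simeq \bigvee KO$ in the $K(1)$-local category — the same equivalence used in the preceding proposition — we see that $KO$ is a wedge summand, hence a retract, of $\tmf$. Since the class of $K(1)$-local $\tmf$-nilpotent spectra contains $\tmf$ and is closed under retracts, $KO$ is $\tmf$-nilpotent. Presenting the $K(1)$-local sphere as a desuspension of the cofiber of a self-map of $KO$ (namely $\psi^g - 1$), and using closure of the $\tmf$-nilpotent class under cofibers and under smashing with arbitrary $K(1)$-local spectra, we conclude that $S$ is $\tmf$-nilpotent. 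Smashing once more with $X$ disposes of the general case, and the conditional convergence statement follows.
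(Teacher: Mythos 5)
Your proposal is correct and follows essentially the same structure as the paper's proof: pro-freeness gives the Hopf algebroid by \Cref{Lcomplete Hopf algebroid}, \Cref{prop: K1local ASS} identifies the $E_2$-page, and \Cref{prop: K1local ASS convergence} reduces convergence to showing every $K(1)$-local spectrum is $\tmf$-nilpotent, which in turn reduces to the sphere via the fiber sequence $S \to KO \xrightarrow{\psi^g-1} KO$. The one place you diverge is in establishing that $KO$ is $\tmf$-nilpotent: you invoke Laures's additive splitting $\tmf \simeq \bigvee KO$ to exhibit $KO$ as a retract of $\tmf$, whereas the paper instead uses the cofiber sequence $\tmf \xrightarrow{j^{-1}} \tmf \to KO$, so that $KO$ is a cofiber of a self-map of $\tmf$. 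Both are valid; the paper's choice has the small advantage of being self-contained at that point (it does not re-invoke Laures), and it keeps the two nilpotence steps parallel in flavor (both via cofiber sequences arising from self-maps), but your retract argument is equally sound given that the splitting was already used two lines earlier.
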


\begin{proof}
Since $\tmf_* \to \tmf_*\tmf$ is pro-free, one has an $L$-complete Hopf algebroid by \Cref{Lcomplete Hopf algebroid}. Then by \Cref{prop: K1local ASS}, the $E_2$ page of the Adams spectral sequence has the form described.

To establish convergence, one needs to show that $X$ is $K(1)$-local $\tmf$-nilpotent. Recall from \cite[Appendix 1]{DH} and \cite{Bousfield79} that this is the largest class of $K(1)$-local spectra containing $\tmf$ and closed under retracts, cofibers, and $K(1)$-local smash products with arbitrary spectra. Now, multiplication by $j^{-1}$ gives a cofiber sequence
\[	\tmf \stackrel{j^{-1}}{\to} \tmf \to KO,	\]
so that $KO$ is $K(1)$-local $\tmf$-nilpotent. The cofiber sequence
\[	S \to KO \to KO	\]
then shows that the sphere is $K(1)$-local $\tmf$-nilpotent. This clearly implies that an arbitrary spectrum is $K(1)$-local $\tmf$-nilpotent.
\end{proof}

We can now prove \Cref{thm: ASS}.

\begin{thm}
There is a natural isomorphism
\[	\Ext_{\tmf_*\tmf}(\tmf_*, \tmf_*) \cong \Ext_{KO_*KO}(KO_*, KO_*).	\]
\end{thm}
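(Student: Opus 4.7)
The plan is to apply the change of rings theorem \Cref{prop: change of rings} to the morphism of $L$-complete Hopf algebroids $(\tmf_*, \tmf_*\tmf) \to (KO_*, KO_*KO)$ induced by the $E_\infty$ map $r:\tmf \to KO$ constructed in \Cref{sec: tmf}. Both Hopf algebroids have pro-free left units (for $\tmf$ this is the proposition just established; for $KO$ it is classical), so this is a legitimate map.

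First, I would verify that $KO_* \otimes_{\tmf_*} \tmf_*\tmf \otimes_{\tmf_*} KO_* \cong KO_*KO$ by direct computation with the explicit presentations. Since $\tmf_* \cong KO_*[f]$ with $KO_* \cong \tmf_*/(f)$, tensoring
\[ \tmf_*\tmf \cong KO_* \otimes \Z_p[f,\ol{f}] \otimes \TT(\ell)/(\psi^p(\ell) - \ell - f + \ol{f}) \]
over $\tmf_*$ against $KO_*$ on both sides kills $f$ (via $\eta_L$) and $\ol{f}$ (via $\eta_R$), leaving the $\theta$-algebra $KO_* \otimes \TT(\ell)/(\psi^p(\ell) - \ell)$. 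By \Cref{j iso}, this is precisely $KO_* \otimes \Cts(\Z_p,\Z_p) = KO_*KO$, with $\ell$ matching the generator $b$.

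Next, I would take $C := KO_* \otimes_{\tmf_*} \tmf_*\tmf$ with $B \otimes_A \Gamma_A \to C$ the identity, reducing the second hypothesis of \Cref{prop: change of rings} to pro-freeness of the map $\tmf_* \stackrel{1 \otimes \eta_R}{\longrightarrow} KO_* \otimes_{\tmf_*} \tmf_*\tmf$. Since $\tmf_*\tmf$ is pro-free over $\tmf_*$, \Cref{prop: pro-free tensor} identifies this module with $KO_*\tmf$. Laures's additive splitting $\tmf \simeq \bigvee_{n \ge 0} KO$ then yields a decomposition
\[ KO_*\tmf \cong \bigoplus_{n \ge 0} KO_*KO \cdot \ol{j^{-n}}, \]
and $1 \otimes \eta_R$ sends $j^{-1} \in \tmf_*$ to multiplication by $\ol{j^{-1}}$, which shifts the summand index. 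Hence any pro-free $KO_*$-basis of $KO_*KO$ serves as a pro-free $\tmf_*$-basis of $KO_*\tmf$.

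With both hypotheses verified, \Cref{prop: change of rings} delivers the isomorphism. I anticipate the main technical subtlety to be keeping straight which $\tmf_*$-module structure is at play: the relevant pro-freeness holds with respect to $1 \otimes \eta_R$, and it is Laures's splitting, rather than the defining algebraic presentation of $\tmf_*\tmf$, that makes this transparent, because the shift action of $j^{-1}$ never identifies distinct monomials $\ol{j^{-n}}$.
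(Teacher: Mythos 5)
Your proof follows the same high-level strategy as the paper (apply the change-of-rings theorem \Cref{prop: change of rings} to the morphism of Hopf algebroids induced by $r:\tmf\to KO$, then verify the two hypotheses), and the verification of the first hypothesis is done in the same way. However, you take a genuinely different route for the pro-freeness hypothesis, and in both places you gloss over a step that the paper makes precise.

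For the first hypothesis, computing $KO_*\otimes_{\tmf_*}\tmf_*\tmf\otimes_{\tmf_*}KO_*\cong KO_*\otimes\TT(\ell)/(\psi^p(\ell)-\ell)\cong KO_*\otimes\Cts(\Z_p,\Z_p)$ gives an abstract isomorphism with $KO_*KO$, but the hypothesis requires the \emph{natural} map to $KO_*KO$ to be an isomorphism. The assertion that ``$\ell$ matches the generator $b$'' under that map is exactly what needs proof; the paper does this via the commuting square with $KO_*(\tmf\sma\tmf)\to KO_*(KO\sma KO)$, using injectivity of the inclusion of primitives and the formula $m^*(b)=\eta_L(b)-\eta_R(b)$ from \Cref{thm: K Hopf algebroid}. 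Without some argument like this, the generators could in principle mismatch and the base-changed Hopf algebroid need not be $KO_*KO$.

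For the second hypothesis, the paper works with $\tmf_*KO=\tmf_*\tmf/(\ol{f})$ and proves pro-freeness over $\tmf_*$ purely algebraically, by base-changing the pro-free map $\TT(f)\to\TT(b)$ of \Cref{i injective flat} along $\TT(f)\to\tmf_*=KO_*[f]$ (using \Cref{lem: pro-free base change}). You instead take $C=KO_*\tmf$ and argue via Laures's splitting $\tmf\simeq\bigvee_{n\ge 0}KO$. This is a legitimate alternative, and it does work, but you should be aware of what it is you are using: the claim that $\ol{j^{-1}}$-multiplication \emph{shifts} the summand index is not a consequence of the ``additive equivalence'' alone. It requires that the splitting be compatible with the self-map $j^{-1}:\tmf\to\tmf$, i.e.\ that the equivalence is constructed by iterating the splitting of the cofiber sequence $\tmf\xrightarrow{j^{-1}}\tmf\to KO$, so that the $n$th summand includes as $j^{-n}\circ s$ for a fixed section $s:KO\to\tmf$. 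That compatibility is true and not hard to arrange, but it is the crux of your argument and deserves to be stated, since it is precisely the content that makes a $KO_*$-basis of $KO_*KO$ into a $\tmf_*$-basis of $KO_*\tmf$. The paper's pushout-square argument sidesteps this point entirely, which is why it may be preferable even though your topological argument is more conceptual. Between the two, yours buys a cleaner picture of $KO_*\tmf$ as a shift module over $\tmf_*$, while the paper's stays within the $\theta$-algebraic framework already set up in \Cref{sec: tmf} and avoids any appeal to properties of Laures's splitting beyond its existence.
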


\begin{proof}
The ring map $\tmf \to KO$ induces a map of Hopf algebroids,
\[	(\tmf_*, \tmf_*\tmf) \to (KO_*, KO_*KO).	\]
The map $\tmf_* \to KO_*$ sends $j^{-1}$ to zero, and thus sends $f = j^{-1} + O(pj^{-1}, j^{-2})$ to zero as well. We have
\begin{multline*}
	KO_* \otimes_{\tmf_*} \tmf_*\tmf \otimes_{\tmf_*} KO_* \\
	= KO_* \otimes_{\tmf_*} (KO_* \otimes \Z_p[f, \ol{f}] \otimes \TT(\ell)/(f - \ol{f} - \psi^p(\ell) + \ell)) \otimes_{\tmf_*} KO_*  \\
	\cong KO_* \otimes \TT(\ell)/(\psi^p(\ell) - \ell).
\end{multline*}
We need to identify the image of $\ell$ in $KO_*KO$. Consider the commuting square
\[	\xymatrix{ \tmf_*\tmf \ar[r] \ar[d] & KO_*(\tmf \sma \tmf) \ar[d] \\ KO_*KO \ar[r] & KO_*(KO \sma KO). }	\]
The horizontal maps are both inclusions of $KO_*KO$-primitives, and, in particular, injective. Going from $\tmf_*\tmf$ to $KO_*(KO \sma KO)$ around the top right corner sends $\ell$ to $b - \ol{b}$, where we recall that
\[	b \in KO_0KO \cong \Cts(\Z_p, \Z_p)	\]
is the identity map on $\Z_p$. Using the Hopf algebroid formulas found in \Cref{thm: K Hopf algebroid}, together with the group isomorphism $\Z_p^\times/\mu \cong \Z_p$, we have
\[	b - \ol{b} = \eta_L(b) - \eta_R(b) \in \Cts(\Z_p \times \Z_p, \Z_p): (x,y) \mapsto x-y.	\]
In the notation of \Cref{lem: tensor product extended}, the primitives are included into $\Cts(\Z_p \times \Z_p, KO_*)$ via precomposition with
\[	m: \Z_p \times \Z_p \to \Z_p:\,\, m(x,y) = x-y.	\]
Thus, $b - \ol{b}$ is precisely $m^*(b)$. This proves that the map $\tmf_*\tmf \to KO_*KO$ sends $\ell$ to $b$. It follows that the map
\[	KO_* \otimes_{\tmf_*} \tmf_*\tmf \otimes_{\tmf_*} KO_* \to KO_*KO	\]
is an isomorphism.

Using the fiber sequence
\[	\tmf \stackrel{j^{-1}}{\to} \tmf \to KO,	\]
one obtains
\[	\tmf_*KO = \tmf_*\tmf/(\ol{j^{-1}}) = \tmf_*\tmf/(\ol{f}) = KO_*[f] \otimes \TT(\ell)/(f - \psi^p(\ell) + \ell).	\]
There is a pushout square of $L$-complete rings
\[	\xymatrix{ \TT(f) \ar[d] \ar[r]^{f \mapsto \psi^p(b) - b} & \TT(b) \ar[d]^{b \mapsto \ell} \\ KO_*[f] \ar[r] & KO_*[f] \otimes \TT(\ell)/(f - \psi^p(\ell) + \ell). \pushoutcorner }	\]
The top horizontal map is pro-free by \Cref{i injective flat}. By \Cref{lem: pro-free base change}, the bottom horizontal map is also pro-free.

Thus, the map of Hopf algebroids satisfies the hypotheses of the change-of-rings theorem, \Cref{prop: change of rings}, so induces an equivalence on Ext.
\end{proof}

\begin{cor}
The $K(1)$-local $\tmf$-based Adams spectral sequence for the sphere collapses at $E_2$, where it is concentrated on the 0 and 1 lines.
\end{cor}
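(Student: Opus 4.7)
The plan is to combine the preceding theorem with the classical calculation of $\Ext$ for $KO$. By the theorem, the $E_2$ page of the $K(1)$-local $\tmf$-based Adams spectral sequence for the sphere is
\[ E_2 = \Ext_{\tmf_*\tmf}(\tmf_*, \tmf_*) \cong \Ext_{KO_*KO}(KO_*, KO_*). \]
Using the discussion in \Cref{subsec: Hopf algebroids K KO} that identifies relative $\Ext$ over $(KO_*, KO_*KO)$ with continuous $\Z_p^\times/\mu$-cohomology of the coefficients, this becomes $H^*_{cts}(\Z_p^\times/\mu, KO_*)$, which also appears in \Cref{prop: K1local ASS KO}.

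Next I would invoke the fact that $\Z_p^\times/\mu \cong \Z_p$ is pro-cyclic of cohomological dimension $1$, so this group cohomology is concentrated in degrees $0$ and $1$. This places the entire $E_2$ page on the $s=0$ and $s=1$ cohomological lines.

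Finally, the Adams differentials have the form $d_r: E_r^{s,t} \to E_r^{s+r,t+r-1}$ with $r \ge 2$. Starting from classes on the $0$- or $1$-line, any such differential has target on line $s+r \ge 2$, which is zero. Hence all differentials vanish and the spectral sequence collapses at $E_2$. Conditional convergence is already guaranteed by the preceding corollary, so no further argument is needed.

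There is no real obstacle here: once the previous theorem is in hand, the argument is just the observation that $\Z_p^\times/\mu$ has cohomological dimension one combined with the shape of the Adams differentials. The only subtlety worth checking is that the identification of $\Ext_{KO_*KO}(KO_*, KO_*)$ with continuous group cohomology applies to an $L$-complete (rather than strictly $p$-complete) coefficient module $KO_*$, but this is already addressed in \Cref{subsec: Hopf algebroids K KO}.
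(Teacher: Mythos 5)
Your argument is correct and matches the paper's proof: both apply the change-of-rings isomorphism from the preceding theorem, identify the resulting $\Ext_{KO_*KO}(KO_*,KO_*)$ with $H^*_{cts}(\Z_p^\times/\mu, KO_*)$, and conclude from $\mathrm{cd}(\Z_p) = 1$ that the spectral sequence is concentrated on lines $0$ and $1$, leaving no room for differentials. The one worry you raise at the end is moot, since $KO_*$ is finitely generated over $\Z_p$ in each degree and hence honestly $p$-complete, so the identification with continuous group cohomology applies directly.
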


\begin{proof}
As noted in \Cref{subsec: Hopf algebroids K KO}, we can identify Ext of $p$-complete $KO_*KO$-comodules with continuous group cohomology of $\Z_p^\times/\mu \cong \Z_p$. It is well-known that this profinite group has cohomological dimension 1. So the $E_2$ page of the spectral sequence is concentrated on the 0 and 1 lines and has no room for differentials.
\end{proof}

\appendix

\section{$\lambda$-rings and Hopf algebras} 
This section collects useful algebra related to the multiplicative theory of $K(1)$-local spectra. As we discuss in \Cref{subsec: theta-algebras}, any $K(1)$-local $E_\infty$-ring has power operations on its $\pi_0$ making it into a $\theta$-algebra (cf.~\cite{Bousfield96}, \cite{Rezk09}). We recall Bousfield's description of the free $\theta$-algebra functor and note that it takes values in Hopf algebras. In \Cref{subsec: lambda-rings}, we recall the definition of $\lambda$-rings, which are closely related to $\theta$-algebras -- see \cite{Bousfield96}. Unlike $\theta$-algebras, which are a vital feature of $K(1)$-local homotopy theory, $\lambda$-rings will largely play a technical role in some of the proofs in this paper. For this reason, we take the opportunity to clarify some of the ways of passing between $\lambda$-rings and $\theta$-algebras.

For the most part, we restrict to working with modules which are $p$-complete rather than merely $L$-complete. Let $\Mod_{\Z_p}^\wedge$ denote the category of $L$-complete $\Z_p$-modules, and recall from section \ref{sec: notation} that all algebraic statements carry a tacit completion.

\subsection{\texorpdfstring{$E_\infty$}{E infinity}-rings and \texorpdfstring{$\theta$}{theta}-algebras}\label{subsec: theta-algebras}
	
\begin{defn}
A \textbf{$\theta$-algebra} is an $L$-complete $\Z_p$-algebra $R$ equipped with operations $\theta:R \to R$
\begin{align*}
	\theta(x+y) &= \theta(x) + \theta(y) - \sum_{i=1}^{p-1} \frac{1}{p}\binom{p}{i}x^iy^{p-i}, & \\
	\theta(xy) &= x^p\theta(y) + y^p\theta(x) + p\theta(x)\theta(y) &\text{ for }x, y \in R_0,  \\
	\theta(1) &= 0. &
\end{align*}
\end{defn}
We will write $\psi^p(x) = x^p + p\theta(x)$ for $x$ in degree zero. Note that the above formulas imply that $\psi^p$ is a ring homomorphism in degree zero. Conversely, if $R$ is $p$-torsion-free, then $\theta$ can be uniquely recovered from a ring homomorphism $\psi^p$ satisfying $\psi^p(x) \equiv x^p$ mod $p$.

\begin{defn}\label{defn: psi-theta-algebra}
A \textbf{$\psi$-$\theta$-algebra} is a $p$-complete $\theta$-algebra $R$ together with maps $\psi^k:R \to R$ for $k \in \Z_p^\times$ such that
\begin{enumerate}
	\item $\psi^k$ is multiplicative on $R$,
	\item $k \mapsto \psi^k$ is a continuous endomorphism from $\Z_p^\times$ to the monoid of endomorphisms of $R_*$,
	\item and each $\psi^k$ commutes with $\theta$ and $\psi^p$.
\end{enumerate}
\end{defn}

\begin{prop}[{\cite[Chapter IX]{BMMS}, \cite{moduliproblems}}]
If $X$ is a $K(1)$-local $E_\infty$-ring spectrum such that $K_*X$ is $p$-complete, then $K_0X$ is naturally a $\psi$-$\theta$-algebra, with $\psi^k$ for $k \in \Z_p^\times$ given by the Adams operations.
\end{prop}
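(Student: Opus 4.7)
The plan is to assemble the $\psi$-$\theta$-algebra structure on $K_0X$ from two ingredients that have already been set up: McClure-type power operations on any $K(1)$-local $E_\infty$-ring, and the $\Z_p^\times$-action by Adams operations on $K$. Since $K$ and $X$ are both $K(1)$-local $E_\infty$-ring spectra, the $K(1)$-local smash product $K \sma X$ is a $K(1)$-local $E_\infty$-ring, and $K_0X = \pi_0(K \sma X)$ is the $\pi_0$ we must endow with structure.

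First, I would produce the $\theta$-algebra structure. By McClure's theorem (\cite{BMMS}), for any $K(1)$-local $E_\infty$-ring $Y$ with $\pi_0 Y$ $p$-complete, there is a natural unary power operation $\theta$ on $\pi_0 Y$ satisfying the axioms of \Cref{subsec: theta-algebras}; equivalently, there is a natural ring map $\psi^p:\pi_0 Y \to \pi_0 Y$ with $\psi^p(x)\equiv x^p\pmod p$, from which $\theta$ is recovered by the $p$-torsion-free formula after $p$-completion. Concretely, given $x\in K_0X$ represented by $x: S^0\to K\sma X$, its adjoint $E_\infty$-map $\PP(S^0)\to K\sma X$ induces a $\theta$-algebra map $K_0\PP(S^0)\to K_0 X$; universality of $K_0\PP(S^0)$ as the free $\theta$-algebra on a generator (again McClure's theorem) shows that the resulting operation $\theta$ on $K_0X$ is well-defined and satisfies the required relations.

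Next, I would define the Adams operations. For each $k\in \Z_p^\times$, the map $\psi^k: K\to K$ is an $E_\infty$-ring map, so $\psi^k\sma \mathrm{id}_X:K\sma X\to K\sma X$ is a map of $K(1)$-local $E_\infty$-rings, inducing $\psi^k: K_0X\to K_0X$. Multiplicativity is then immediate. Compatibility of each $\psi^k$ with $\theta$ and $\psi^p$ follows because $\psi^k\sma \mathrm{id}_X$ is an $E_\infty$-map, so it respects any natural transformation built out of the $E_\infty$-monad, including the unary power operations; equivalently, at the level of $\pi_0$, $\psi^k$ is a map of $\theta$-algebras.

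The main obstacle is the continuity axiom: we must upgrade the abstract family $\{\psi^k\}_{k\in\Z_p^\times}$ to a continuous action, i.e., a factorization through $\Z_p^\times\to \mathrm{End}(K_0X)$ in the sense of \Cref{defn: psi-theta-algebra}. Here I would invoke the discussion of \Cref{subsec: Hopf algebroids K KO}: since $K_*X$ is $p$-complete, the $K_*K$-comodule structure on $K_*X$ (coming from the map $K\sma X\to K\sma K\sma X$) is, via the isomorphism $K_*K\cong \Cts(\Z_p^\times, K_*)$, exactly the data of a continuous $\Z_p^\times$-action on $K_*X$, and the induced action on $K_0X$ coincides degree-wise with the endomorphisms $\psi^k$ constructed above. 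This produces the required continuity, completing the verification that $K_0X$ is a $\psi$-$\theta$-algebra. Naturality in $X$ is clear throughout, since every step is functorial in $K(1)$-local $E_\infty$-maps.
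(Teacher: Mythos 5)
The paper does not actually give a proof of this proposition---it is stated as a citation to \cite{BMMS} (Chapter IX) and \cite{moduliproblems}---so there is no internal argument to compare against. That said, your proposed assembly (McClure's power operations on $\pi_0$ of the $K(1)$-local $E_\infty$-ring $K\sma X$ supply the $\theta$-algebra structure; the $\Z_p^\times$-action on $K$ by $E_\infty$-maps supplies the $\psi^k$; compatibility because $\psi^k\sma \mathrm{id}_X$ is $E_\infty$; continuity from identifying the family $\{\psi^k\}$ with the $K_*K$-comodule action using $K_*K\cong\Cts(\Z_p^\times,K_*)$ and the $p$-completeness hypothesis) is exactly the standard argument those references lead to, and it is logically sound.

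One small wobble in exposition: you describe $\theta$ as being ``recovered'' from the ring map $\psi^p$ by the $p$-torsion-free formula after $p$-completion, but $K_0X$ is not assumed $p$-torsion-free, so that reconstruction is not available in general. In McClure's framework it is $\theta$ (the free $E_\infty$-cell operation coming from $K_0 B\Sigma_p$) that is primary, and $\psi^p := x^p + p\theta(x)$ is derived from it. Your subsequent ``concrete'' version via $\PP(S^0)\to K\sma X$ and the structure of $K_0\PP(S^0)$ as the free $\theta$-algebra is the right way to say it, and it does not rely on torsion-freeness, so the argument stands; I would just drop the parenthetical appeal to the torsion-free recovery of $\theta$ from $\psi^p$ to avoid implying a hypothesis you don't have.
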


Since the Adams operations commute with the $\theta$-algebra structure, the $\theta$-algebra structure passes through the homotopy fixed points spectral sequence. Thus, if $X$ is a $K(1)$-local $E_\infty$-ring spectrum, $\pi_0 X$ is also a $\theta$-algebra. In other words, the classes in $K_0B\Sigma_p$ representing the power operations $\theta$ and $\psi^p$ lift to $\pi_0L_{K(1)}B\Sigma_p$ -- see \cite{K1localrings}.

\begin{prop}\label{K theta algebra}
The $\theta$-algebra structures on $\pi_0K = \pi_0KO = \Z_p$, on $KO_0KO$, and on $K_0K$ are all given by $\psi^p = \mathrm{id}$.
\end{prop}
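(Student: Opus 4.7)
The plan is to handle the three cases in turn, with the essential input being that the evaluation maps implicit in the Hopf algebroid identifications of \Cref{thm: K Hopf algebroid} are induced by $E_\infty$-ring morphisms, and hence preserve the $\theta$-algebra structure. First, for $\pi_0 K = \pi_0 KO = \Z_p$, the operation $\psi^p$ is by construction a ring endomorphism of $\Z_p$, so it must send $1$ to $1$, agree with the identity on $\Z$, and by $p$-adic continuity equal the identity on all of $\Z_p$. This already disposes of the simplest case and, just as importantly, identifies the target of the evaluation maps in the remaining two cases.

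For $K_0 K$, I would use the isomorphism $K_0 K \cong \Cts(\Z_p^\times, \Z_p)$. For each $a \in \Z_p^\times$, the evaluation map $\mathrm{ev}_a: K_0 K \to \Z_p$ coincides with $\Phi_K(-)(a)$, which by its very definition is the map induced on $\pi_0$ by the composite of $E_\infty$-ring morphisms
\[
\begin{tikzcd}
K \wedge K \arrow[r, "K \wedge \psi^a"] & K \wedge K \arrow[r, "m"] & K.
\end{tikzcd}
\]
Invoking naturality of the $\theta$-algebra structure on $\pi_0$ under $E_\infty$-ring maps, each $\mathrm{ev}_a$ is a $\theta$-algebra homomorphism, and in particular commutes with $\psi^p$. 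Combined with the previous paragraph, this gives
\[ \mathrm{ev}_a(\psi^p(f)) = \psi^p(\mathrm{ev}_a(f)) = \mathrm{ev}_a(f) \]
for every $f \in K_0 K$ and every $a \in \Z_p^\times$. Since $\Phi_K$ is an isomorphism, and in particular injective, one concludes $\psi^p(f) = f$. The same argument, using the $E_\infty$-ring morphisms $m \circ (KO \wedge \psi^a)$ for $a$ ranging over $\Z_p^\times/\mu$, handles $KO_0 KO$.

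The only point requiring care is really the naturality claim: that the $\theta$-algebra structure on $\pi_0$ of a $K(1)$-local $E_\infty$-ring is preserved by $E_\infty$-ring maps. This is part and parcel of the construction of power operations recalled in \Cref{subsec: theta-algebras} (and of the homotopy fixed points argument producing the $\theta$-algebra on $\pi_0$ from that on $K_0$), so it should amount to no more than a one-line citation. No delicate computation is involved; the whole argument is a consequence of the fact that $\psi^p$ is identity on $\Z_p$ together with the observation that $K_0 K$ and $KO_0 KO$ embed, as $\theta$-algebras, into products of copies of $\Z_p$.
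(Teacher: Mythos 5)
Your proof is correct and follows essentially the same strategy as the paper: pin $\psi^p$ down by noting that the evaluation maps $\mathrm{ev}_a : K_0K \to \Z_p$ commute with it and that $\psi^p = \mathrm{id}$ on $\Z_p$. The paper reaches the commutation with $\mathrm{ev}_a$ slightly differently---using only the single $E_\infty$-map $m = \mathrm{ev}_1$ together with the $\psi$-$\theta$-algebra axiom that $\psi^p$ commutes with the left Adams operations $\psi^k \wedge 1$---but this is the same topological input, and your variant using the family $m\circ(K\wedge\psi^a)$ is equally valid.
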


\begin{proof}
There is a unique $\theta$-algebra structure on $\Z_p$ satisfying the requirements of \Cref{defn: psi-theta-algebra}, and it is $\psi^p = \mathrm{id}$.

As for $K_0K$ (the proof for $KO_0KO$ is similar), the multiplication map $K \sma K \to K$ is an $E_\infty$-map. By \Cref{thm: K Hopf algebroid}, the map induced on $\pi_0$ is
\[	
\Cts(\Z_p^\times, \Z_p) \ni f \mapsto f(1) \in \Z_p.
\]
Thus,
\[	
(\psi^p f)(1) = \psi^p(f(1)) = f(1).
\]
Moreover, $\psi^p$ commutes with the left action of the Adams operations, which act by 
\[	
(\psi^k \sma 1)(f)(x) = f(kx).
\]
It follows that
\[	
(\psi^p f)(k) = f(k)
\]
for every $k \in \Z_p^\times$. Thus, $\psi^p$ acts by the identity.
\end{proof}

There is an adjunction
\[
\begin{tikzcd}
	\TT: \mathsf{Mod}^\wedge_{\Z_p}\arrow[r, shift left] & \Alg_{\theta}\arrow[l, shift left]: U
\end{tikzcd}
\]
where $U$ is the forgetful functor, and $\TT$ is the free $\theta$-algebra functor. It is described explicitly as follows:

\begin{thm}[{\cite[2.6, 2.9]{Bousfield96}}]\label{thm: free theta-algebra}
The free $\theta$-algebra on a single generator $x$ is a polynomial algebra: explicitly,
\[	\TT(x) = \Z_p[x, \theta(x), \theta\theta(x), \dotsc]^\wedge_p \cong \Z_p[x,\theta_1(x), \theta_2(x), \dotsc]^\wedge_p,	\]
where the elements $\theta_n(x)$ are inductively defined so that
\[	\psi^{p^n}(x) = x^{p^n} + p\theta_1(x)^{p^{n-1}} + \dotsb + p^n\theta_n(x).	\]
\end{thm}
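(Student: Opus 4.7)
The plan is to construct the candidate $p$-adically complete polynomial ring $R = \Z_p[y_0, y_1, y_2, \dotsc]^\wedge_p$, equip it with a $\theta$-algebra structure via a Frobenius lift, verify the universal property with respect to a generator $x := y_0$, and finally show that the alternative generators $\theta_n(x)$ also furnish a set of polynomial generators. The conceptual point is that the $\theta$-operation is not a ring map, but on a $p$-torsion-free ring it is equivalent data to a Frobenius lift $\psi^p$ (namely $\theta(a) = (\psi^p(a) - a^p)/p$), and Frobenius lifts are much easier to construct freely because they are ring homomorphisms.

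First I would define a ring endomorphism $\psi: R \to R$ by $\psi(y_n) = y_n^p + p y_{n+1}$ and extending multiplicatively (this is visibly continuous for the $p$-adic topology, hence descends to the $p$-completion). It holds on generators that $\psi(y_n) \equiv y_n^p \pmod p$, and since both $a \mapsto \psi(a)$ and $a \mapsto a^p$ reduce mod $p$ to the Frobenius on $R/p$, the congruence $\psi(a) \equiv a^p \pmod p$ holds for all $a$. Because $R$ is $p$-torsion-free, I can then define $\theta(a) := (\psi(a) - a^p)/p$. The three $\theta$-algebra axioms follow by short formal manipulations: writing $\psi(a) = a^p + p\theta(a)$, the identities $\theta(a+b) = \theta(a)+\theta(b) - \sum_{i=1}^{p-1}\frac{1}{p}\binom{p}{i}a^ib^{p-i}$, $\theta(ab) = a^p\theta(b) + b^p\theta(a) + p\theta(a)\theta(b)$, and $\theta(1)=0$ drop out of the multiplicativity of $\psi$ and the binomial expansion of $(a+b)^p$.

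For the universal property, suppose $S$ is any $\theta$-algebra and $s \in S$. There is a unique continuous $\Z_p$-algebra map $\phi: R \to S$ with $\phi(y_n) = \theta^n(s)$, and I need to show it commutes with $\theta$. On the generators this holds by construction: $\phi(\theta(y_n)) = \phi(y_{n+1}) = \theta^{n+1}(s) = \theta(\phi(y_n))$. For a general element, the two $\theta$-algebra axioms for sums and products let one reduce $\theta$ applied to any polynomial in the $y_n$ to an expression involving the $\theta(y_n)$ and ring operations on the $y_n$; the same formulas are valid in $S$, so $\phi\circ\theta$ and $\theta\circ\phi$ agree everywhere by induction on polynomial complexity. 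Uniqueness is immediate since $y_n = \theta^n(x)$ forces the values of $\phi$. The main obstacle here is this last bookkeeping step — one has to check that the inductive unwinding of $\theta$ via the sum and product axioms gives a well-defined universal expression that behaves the same in $R$ and $S$; this is routine but the cleanest presentation is to verify directly that the equalizer of $\phi\theta$ and $\theta\phi$ contains the generators and is closed under sums and products, which follows from the axioms.

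Finally, for the alternative polynomial generators $\theta_n(x)$, the defining relation $\psi^{p^n}(x) = x^{p^n} + p\theta_1(x)^{p^{n-1}} + \dotsb + p^n\theta_n(x)$ determines $\theta_n(x)$ recursively, and I must check that the claimed quotient by $p^n$ is integral. This follows by induction using $\psi^{p^n}(x) = \psi^p(\psi^{p^{n-1}}(x))$, the binomial expansion $(u+pv)^p = u^p + p\cdot pu^{p-1}v + \dotsb + p^p v^p$ (whose non-leading terms are divisible by $p^2$ because $p \mid \binom{p}{i}$ for $0<i<p$), and the telescoping that cancels $\theta_i(x)^{p^{n-i}}$ against the leading term of $\psi^p$ applied to $\theta_i(x)^{p^{n-1-i}}$. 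The same induction shows $\theta_n(x) \equiv \theta^n(x)$ modulo the ideal $(p, \theta(x), \dotsc, \theta^{n-1}(x))$, so the change-of-generators matrix from $\{\theta^n(x)\}$ to $\{\theta_n(x)\}$ is unitriangular over $\Z_p[x,\theta(x),\dotsc]^\wedge_p$ and thus invertible, exhibiting $\TT(x)$ as the $p$-complete polynomial algebra on $x,\theta_1(x),\theta_2(x),\dotsc$ as claimed.
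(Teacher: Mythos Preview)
The paper does not prove this theorem; it is stated with a citation to Bousfield \cite[2.6, 2.9]{Bousfield96} and then used as a black box. So there is no ``paper's own proof'' to compare against.

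Your argument is correct and is essentially the standard one. A couple of minor clarifications that would tighten it:

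For the universal property, the equalizer formulation you mention at the end is indeed the cleanest route and avoids the vague ``induction on polynomial complexity.'' The set $E = \{a \in R : \phi(\theta(a)) = \theta(\phi(a))\}$ contains $1$ and the $y_n$, and the sum and product axioms for $\theta$ (which hold identically in both $R$ and $S$) show $E$ is closed under addition and multiplication; since $E$ is also $p$-adically closed, $E = R$.

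For the alternative generators, the statement you actually need is the integral one: $\theta_n(x) = y_n + P_n(y_0,\dotsc,y_{n-1})$ for some $P_n \in \Z_p[y_0,\dotsc,y_{n-1}]$. This is slightly stronger than your mod-$(p,y_1,\dotsc,y_{n-1})$ congruence but follows from the same induction: one checks that $\psi^n(y_0)$, as a polynomial in $y_n$ over $\Z_p[y_0,\dotsc,y_{n-1}]$, is linear with leading coefficient exactly $p^n$, while the subtracted terms $y_0^{p^n} + p\theta_1(x)^{p^{n-1}} + \dotsb + p^{n-1}\theta_{n-1}(x)^p$ lie in $\Z_p[y_0,\dotsc,y_{n-1}]$ by the inductive hypothesis. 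Once you have $\theta_n(x) = y_n + P_n$, the map $y_n \mapsto \theta_n(x)$ is a triangular automorphism of the polynomial ring and hence invertible, without needing to pass through mod-$p$ reductions.
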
 
	
Theta-algebras function as an algebraic approximation to $K(1)$-local $E_\infty$-algebras, as was shown in the following form by \cites{Rezk09, BarthelFrankland} following work of \cite{BMMS}. Write $\PP:\Sp \to \CAlg$ for the free $K(1)$-local $E_\infty$-algebra functor. This is given by
	\[
	\PP(X) = L_{K(1)}\left(\bigvee_{i\geq 0}E\Sigma_{i+}\wedge_{\Sigma_i}X^{\wedge i}\right).
	\]
	
	\begin{thm}[{\cite{BarthelFrankland}}]\label{thm: McClure}
		For a $K(1)$-local spectrum $X$, there is a natural map
		\[	\TT(K_*X) \to K_*(\PP(X)),	\]
		which is an isomorphism if $K_*X$ is flat as a $K_*$-module.
	\end{thm}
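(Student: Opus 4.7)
The plan is to first construct the natural map, then prove it is an isomorphism by reducing to the case $X = S^n$. To build the map, note that the unit $X \to \PP(X)$ induces a map of $K_*$-modules $K_*X \to K_*\PP(X)$. Since $\PP(X)$ is a $K(1)$-local $E_\infty$-algebra, its $K_0$ carries a $\theta$-algebra structure via the power operations in $K_0B\Sigma_p$, and the structure extends to all of $K_*\PP(X)$ as these power operations commute with Adams operations and hence survive the homotopy fixed point description of $KO$ and degree-shifting by the Bott element. By the universal property of $\TT$ (\Cref{thm: free theta-algebra}), the module map then extends uniquely to a map of $\theta$-algebras $\TT(K_*X) \to K_*\PP(X)$. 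Naturality is immediate from the universality.

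For the isomorphism assertion, I would exploit that both sides transform coproducts in the source into completed tensor products in the target, provided we restrict to spectra with flat $K$-homology. On the topological side, $\PP(X \vee Y) \simeq \PP(X) \wedge \PP(Y)$ because $\PP$ is left adjoint to the forgetful functor $\CAlg \to \Sp$; combined with the Künneth formula of \Cref{prop: pro-free tensor}, one gets $K_*\PP(X \vee Y) \cong K_*\PP(X) \otimes_{K_*} K_*\PP(Y)$ under the flatness hypothesis. On the algebraic side, $\TT$ is left adjoint to the forgetful functor from $\theta$-algebras to $L$-complete $\Z_p$-modules, so it sends direct sums to (completed) tensor products. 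Thus, by choosing a presentation of $K_*X$ as a pro-free $K_*$-module and lifting the generators to a map $\bigvee S^{n_\alpha} \to X$ which is an isomorphism on $K$-homology (using \Cref{lem: pro-free wedge of spheres} applied to $K \sma X$ as a $K$-module), one reduces to the case of a single sphere $X = S^n$.

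It remains to show that $\TT(K_*S^n) \to K_*\PP(S^n)$ is an isomorphism. Decompose
\[
\PP(S^n) \simeq \bigvee_{i \geq 0} L_{K(1)}\bigl(E\Sigma_{i+} \wedge_{\Sigma_i} S^{ni}\bigr),
\]
and use that on $K$-homology, for $i$ not a power of $p$, the norm/transfer map exhibits the extended power as a retract of a term already detected by lower-order operations (i.e., the transfer is an equivalence $K(1)$-locally up to units when $\Sigma_i$ contains a $p$-regular subgroup). This leaves the contributions from $i = p^k$. By the $K(1)$-local computation of $L_{K(1)}B\Sigma_{p+}$ (which realizes $\theta$ and $\psi^p$ as genuine homotopy classes, as in \cite{K1localrings}), each successive extended $p$-power contributes a new polynomial generator $\theta^k(x)$, where $x$ generates $K_nS^n$. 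The main obstacle will be the careful bookkeeping needed to verify that these classes match the free $\theta$-algebra generators $\theta_k(x)$ of \Cref{thm: free theta-algebra} freely and with no extra relations, which requires verifying a base change property for the extended power construction and controlling the behavior of the $K(1)$-local transfer for $\Sigma_{p^{k+1}}/(\Sigma_p \wr \Sigma_{p^k})$. Once the single-sphere case is established, assembling the tensor product decompositions gives the general statement.
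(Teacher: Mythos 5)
The paper does not prove this theorem; it is stated as a citation to \cite{BarthelFrankland}, so there is no internal proof to compare against. Your proposal is an attempt at a direct proof in the style of McClure's original argument, and while the overall strategy is sensible, there are two genuine gaps.

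First, the reduction to spheres is flawed as written. You apply \Cref{lem: pro-free wedge of spheres} to $K \sma X$ as a $K$-module, which correctly yields an equivalence of $K$-modules $K \sma X \simeq \bigvee \Sigma^{n_\alpha} K$. But you then assert a map $\bigvee S^{n_\alpha} \to X$ inducing an isomorphism on $K$-homology, and this does not follow: a generator $x_\alpha \in K_{n_\alpha} X = \pi_{n_\alpha}(K \sma X)$ corresponds to a map $S^{n_\alpha} \to K \sma X$, not to a map $S^{n_\alpha} \to X$. In fact, $K_*X$ pro-free does \emph{not} imply $X$ is $K(1)$-locally a wedge of spheres; the $K_*K$-comodule structure is an obstruction (e.g.\ $KO$ has pro-free $K$-homology but is not a wedge of $K(1)$-local spheres). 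The correct move is to observe that $K \sma \PP(X) \simeq \PP_K(K \sma X)$ by base change (both $\PP$ and $K \sma -$ are left adjoints compatibly), and then argue entirely inside $\Mod_K$, where $K \sma X$ really is a wedge of shifted copies of $K$ by pro-freeness. Your reduction-to-a-single-sphere step then also requires checking that the infinite wedge can be handled, i.e.\ that both sides convert (derived, $L$-completed) colimits of flat inputs correctly; this is not automatic because $\TT$ of an infinite direct sum involves a completion that must be matched on the topological side.

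Second, the heart of the proof — the computation of $K_*\PP(S^n)$ and the identification of the free $\theta$-algebra generators — is acknowledged as incomplete. Your statement that ``the transfer is an equivalence $K(1)$-locally up to units when $\Sigma_i$ contains a $p$-regular subgroup'' is too loose. The actual mechanism is that for $i$ not a power of $p$, writing $i = \sum a_j p^j$, the transfer from the Young subgroup $\prod_j \Sigma_{p^j}^{\times a_j}$ is a $K(1)$-local split surjection onto the $i$-th extended power, so those contributions are products of lower $p$-power pieces (a Cartan formula). This needs a precise statement and proof, and then one must still verify that the classes coming from $\PP_{p^k}(S^n)$ generate freely — that is, the vanishing of relations — which is the real content and which you explicitly defer. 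As it stands the proposal is a plausible outline rather than a proof, and for the purposes of this paper a citation to \cite{BarthelFrankland} (or to McClure's work in \cite{BMMS}) is the appropriate resolution.
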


The category $\Alg_\theta$ has tensor products, which are the coproducts in this category. The tensor product of $R_1$ and $R_2$ has underlying ring $R_1 \otimes R_2$, and $\theta$-algebra structure
	\[
	\theta(x\otimes y) = x^p\otimes \theta(y)+ \theta(x)\otimes y^p+ p\theta(x)\otimes \theta(y).
	\]
If $R_1$ and $R_2$ are $\psi$-$\theta$-algebras, the tensor product has the same $\theta$-algebra structure as above, and has Adams operations
\[	\psi^k(x \otimes y) = \psi^k(x) \otimes \psi^k(y).	\]

Recall the adjunction
\[
	\begin{tikzcd}
	\TT: \Mod^\wedge_{\Z_p}\arrow[r, shift left] & \Alg_{\theta} \arrow[l, shift left] : U.
	\end{tikzcd}
\]
If the underlying module carries Adams operations, then the free $\theta$-algebra functor takes values in $\psi$-$\theta$-algebras. This yields an adjunction
	\[
	\begin{tikzcd}
	\TT: \Mod^\wedge_{\Z_p[[\Z_p^\times]]}\arrow[r, shift left] & \Alg_{\psi,\theta} \arrow[l, shift left] : U.
	\end{tikzcd}
	\]
Since $\TT$ is a left adjoint, it preserves coproducts. This results in the following natural isomorphism of $\theta$-algebras (resp. $\psi$-$\theta$-algebras),
	\[
	\TT(M\oplus N)\cong \TT(M)\,\otimes\, \TT(N).
	\]
In particular, this means that $\TT(M)$ has a natural $L$-complete Hopf algebra structure, with comultiplication coming from the diagonal map
\[	M \to M \oplus M.	\]
If $M$ is finitely generated and torsion-free, then $\TT(M)$ is actually a $p$-complete Hopf algebra. Moreover, the structure maps are morphisms of $\theta$-algebras (resp. $\psi$-$\theta$-algebras). 

	\begin{ex}\label{ex: Hopf algebra free theta algebra single generator}
		Of particular interest to us is the free $\theta$-algebra $\TT(b)$ on a single generator $b$. Its underlying algebra structure is given by \Cref{thm: free theta-algebra} above. An elementary calculation shows that $b$ is a Hopf algebra primitive, i.e. 
		\[
		\Delta(b) = b\otimes 1+1\otimes b. 
		\]
		Since $\Delta$ is a morphism of $\psi$-$\theta$ algebras, we have
		\[
		\psi^{p^n}\circ \Delta = \Delta \circ \psi^{p^n}.
		\]
		Since $\psi^{p^n}$ is a ring homomorphism for all $n$, we have 
		\[
		\Delta(\psi^{p^n}(b)) = \psi^{p^n}(b\otimes 1+1\otimes b) = \psi^{p^n}(b)\otimes 1 + 1\otimes \psi^{p^n}(b).
		\]
		Thus $\psi^{p^n}(b)$ is a Hopf algebra primitive for all $n$. This uniquely determines the rest of the Hopf algebra structure.
		
	This Hopf algebra is actually fairly classical. Recall that the additive group of $p$-typical Witt vectors of a $p$-complete ring $R$ are classified by a Hopf algebra
	\[ \Witt = \Z_p[a_0,a_1,\dotsc].	\]
	The map that sends $\theta_n(b)$ to $a_n$ is then an isomorphism $\TT(b) \to \Witt$ of $\theta$-algebras and Hopf algebras. The element $\psi^{p^n}(b)$ goes to the primitive element of $\Witt$,
	\[	w_n = a_0^{p^n} + pa_1^{p^{n-1}} + \dotsb + p^na_n,	\]
	which represents the $n$th ghost component.
	\end{ex}

\subsection{\texorpdfstring{$\lambda$-rings}{Lambda-rings}}\label{subsec: lambda-rings}

\begin{defn}[\cite{Bousfield96, Wilkerson_1982}]\label{def: lambda-rings}
	A \emph{$\lambda$-ring} is a graded commutative $p$-complete $\Z_p$-algebra $R$ equipped with operaitons $\lambda^n: R\to R$ for $n\geq 0$ such that 
	\begin{align*}
		\lambda^0(x) &=1,\\
		\lambda^1(x) &= x, \\
		\lambda^n(1) &=0 \text{ for } n\geq 1, \\
		\lambda^n(x+y) &= \sum_{i+j=n}\lambda^i(x)\lambda^j(y)\\
		\lambda^n(xy) &= P_n(\lambda^1(x), \ldots, \lambda^n(x), \lambda^1(y), \ldots, \lambda^n(y)), \text{ and }\\
		\lambda^m(\lambda^n(x)) &= P_{m,n}(\lambda^1(x), \ldots, \lambda^{mn}(x)).
	\end{align*}
	where $P_n$ and $P_{m,n}$ are certain universal polynomials with integral coefficients which can be recovered by taking $\lambda^n$ to be the $n$th elementary symmetric polynomial in infinitely many variables.
\end{defn}

The category $\Alg_\lambda$ of $\lambda$-rings is also symmetric monoidal. The tensor product is the ordinary $p$-complete tensor product with $\lambda$-operations defined by the Cartan formula, 
\[
\lambda^n = \sum_{i+j=n}\lambda^i\otimes \lambda^j.
\]

The notions of a $\lambda$-ring and a $\psi$-$\theta$-algebra are closely related. In particular, given a $\lambda$-ring we can associate to it Adams operations. Indeed, one defines
\[
\psi^n(x) = \nu_n(\lambda^1(x), \ldots, \lambda^n(x)).
\]
Here, $\nu_n$ is the polynomial so that if $\sigma_k$ denotes the $k$th elementary symmetric polynomial in infinitely many variables $x_i$ and $p_k = \sum x_i^k$, 
\[
p_n(\underline{x}) = \nu_n(\sigma_1(\underline{x}), \ldots \sigma_n(\underline{x})).
\]
The operation $\psi^p$ satisfies the Frobenius congruence $\psi^p(x) \equiv x^p\mod p$. Thus if $R$ is a torsion-free $p$-complete $\lambda$-ring, then $R$ is a $\psi$-$\theta$-algebra. A partial converse also holds. 

\begin{thm}[{Bousfield, \cite[Theorem 3.6]{Bousfield96}}]\label{thm: Bousfield lambda rings}
	A $p$-complete $\psi$-$\theta$-algebra has a unique structure as $\lambda$-ring whose Adams operations are the given $\psi^k$ and $\psi^p$. 
\end{thm}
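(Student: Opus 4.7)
The plan is to reduce to Wilkerson's theorem, which asserts that any torsion-free $p$-complete commutative ring equipped with commuting ring endomorphisms $\psi^n$ (one for every $n \geq 1$) satisfying $\psi^1 = \mathrm{id}$, $\psi^m\psi^n = \psi^{mn}$, and the Frobenius congruence $\psi^p(x) \equiv x^p \pmod{p}$ carries a unique $\lambda$-ring structure with these as its Adams operations. To apply this, two things are needed: first, promote the data of a $\psi$-$\theta$-algebra to a full system of commuting Adams operations $\{\psi^n\}_{n \geq 1}$; and second, pass from the torsion-free case to a general $p$-complete $\psi$-$\theta$-algebra $R$.

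For the first step, every positive integer $n$ factors uniquely as $n = p^a u$ with $a \geq 0$ and $u$ coprime to $p$. Viewing $u \in \Z_p^\times$, I would define $\psi^n := (\psi^p)^a \circ \psi^u$. By \Cref{defn: psi-theta-algebra}, these are commuting ring endomorphisms, and the relations $\psi^m\psi^n = \psi^{mn}$ and $\psi^1 = \mathrm{id}$ follow by a direct calculation using that $\psi^p$ commutes with every $\psi^k$. The Frobenius congruence is immediate from $\psi^p(x) = x^p + p\theta(x)$. When $R$ is $p$-torsion-free, Wilkerson's theorem applies directly to produce the unique $\lambda$-ring structure, and its Adams operations restrict correctly to the given $\psi^p$ and $\psi^k$ for $k \in \Z_p^\times$ by construction.

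For the general case, I would resolve $R$ in the category of $\psi$-$\theta$-algebras as a reflexive coequalizer of frees,
\[
\TT(M_1) \rightrightarrows \TT(M_0) \twoheadrightarrow R,
\]
where $M_0, M_1$ are direct sums of copies of $\Z_p[[\Z_p^\times]]$. By the explicit polynomial presentation following \Cref{thm: free theta-algebra}, each $\TT(M_i)$ is a $p$-completion of a polynomial ring, hence torsion-free, so carries a canonical $\lambda$-ring structure from the torsion-free case. Both parallel arrows are maps of $\psi$-$\theta$-algebras, hence preserve every $\psi^n$, hence are $\lambda$-ring maps by Wilkerson's uniqueness. Descending the $\lambda$-operations along the coequalizer gives a $\lambda$-ring structure on $R$ whose Adams operations are manifestly the prescribed ones. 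Uniqueness on $R$ follows symmetrically: any candidate $\lambda$-structure on $R$ realizing the prescribed Adams operations pulls back to the unique torsion-free $\lambda$-structure on $\TT(M_0)$ and therefore is itself determined.

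The main obstacle is verifying that the $\lambda$-operations genuinely descend through $\TT(M_0) \twoheadrightarrow R$, equivalently that the kernel is a $\lambda$-ideal; this amounts to showing that the two parallel arrows $\TT(M_1) \rightrightarrows \TT(M_0)$ are $\lambda$-ring maps, which is precisely where Wilkerson's uniqueness does the essential work, since a priori they are only $\psi$-$\theta$-algebra maps. A subsidiary technical issue is to confirm that $p$-adic continuity of the $\lambda$-operations is preserved at each step, which is forced by the integrality of the universal polynomials $P_n, P_{m,n}$ in \Cref{def: lambda-rings} and the fact that $\TT$ is built via $p$-completion.
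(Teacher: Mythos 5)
The paper does not prove this theorem; it is cited verbatim from Bousfield. Your proposal is therefore an independent proof attempt, and it is a reasonable one, but a central step is asserted rather than established and, as stated, does not obviously hold.

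Your torsion-free step is fine. The prescription $\psi^n := (\psi^p)^a\circ\psi^u$ for $n = p^a u$ does give a commuting family with $\psi^m\psi^n = \psi^{mn}$, the Frobenius congruence at $p$ holds by definition of $\theta$, and the Frobenius congruences at $\ell\neq p$ are vacuous because $\ell$ is a unit in a $p$-complete ring; so Wilkerson applies, and moreover Wilkerson's uniqueness shows that any ring map between torsion-free $p$-complete $\psi$-$\theta$-algebras intertwining the Adams operations is automatically a $\lambda$-ring map. All good.

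The gap is in the descent. You assert that $R$ admits a \emph{reflexive} coequalizer presentation $\TT(M_1)\rightrightarrows\TT(M_0)\twoheadrightarrow R$ with $M_0,M_1$ free $\Z_p[[\Z_p^\times]]$-modules, and then you claim that showing the kernel of $\TT(M_0)\to R$ is a $\lambda$-ideal ``amounts to'' showing the two parallel arrows are $\lambda$-ring maps. The second claim is not an equivalence. If $d_0,d_1\colon A\rightrightarrows B$ are $\lambda$-ring maps, the smallest $\lambda$-ideal of $B$ killing $d_0-d_1$ contains, but need not equal, the smallest $\psi$-$\theta$-ideal doing so; in other words, the $\lambda$-ring coequalizer is a priori a proper quotient of $R$. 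The thing that rescues this is precisely \emph{reflexivity}: the forgetful functor from $\lambda$-rings to $\psi$-$\theta$-algebras (equivalently, to commutative rings) is monadic with a finitary monad, hence creates reflexive coequalizers, which forces the two ideals to coincide. You invoke reflexivity in passing but neither justify that such a presentation with \emph{free modules} $M_0,M_1$ exists (the monadic bar resolution $\TT(U\TT(UR))\rightrightarrows\TT(UR)\to R$ is reflexive but has $UR$, not a free module, in the generator spot, and $\TT(UR)$ is generally \emph{not} torsion-free when $R$ has $p$-torsion), nor make explicit that reflexivity, rather than Wilkerson, is what makes the kernel a $\lambda$-ideal.

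Both points are repairable. For the presentation: choose a free $\Z_p[[\Z_p^\times]]$-module $F_0$ surjecting onto a generating module of $R$, giving $\TT(F_0)\twoheadrightarrow R$ with kernel a $\psi$-$\theta$-ideal $I$; choose a free module $F_1$ surjecting (as modules) onto a generating module of $I$; set $M_0=F_0$, $M_1=F_0\oplus F_1$, with $d_0,d_1\colon\TT(M_1)\to\TT(M_0)$ agreeing on $F_0$ (the canonical inclusion) and differing on $F_1$ ($d_0$ sends $F_1$ to $0$, $d_1$ sends $F_1$ into $I$). This is reflexive via $\TT(F_0\hookrightarrow F_0\oplus F_1)$, and one checks the $\psi$-$\theta$-coequalizer is $R$. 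Alternatively, and more slickly, replace $\TT(M_1)$ by the kernel pair $K:=\TT(F_0)\times_R\TT(F_0)$, which is automatically reflexive and, being a $\psi$-$\theta$-subalgebra of $\TT(F_0)\times\TT(F_0)$, is torsion-free, so Wilkerson applies to it directly. Either way, once reflexivity is in hand the descent is a formal consequence of the monadicity/sifted-colimit argument, which should be cited explicitly rather than being attributed to Wilkerson's uniqueness, and one should also note that reflexive coequalizers of $p$-complete objects along these lines still have underlying ring $R$ (rather than some further $p$-completed quotient) because $R$ is already $p$-complete and the ideal $I$ is closed.
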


\begin{defn}\label{lambda-ring functors}
As a result, there are not one but two functors from $\psi$-$\theta$-algebras to $\lambda$-rings, both of which are the identity on underlying rings. The \emph{sealed} functor,
\[	\mathcal{S}:\Alg_{\psi,\theta} \to \Alg_\lambda,	\]
is the one given by Bousfield's theorem, and is an equivalence on the subcategories of torsion-free algebras. The \emph{leaky} functor,
\[	\mathcal{L}:\Alg_{\psi,\theta} \to \Alg_\lambda,	\]
first replaces all the $\psi^k$ by the identity for $k$ prime to $p$, and then applies $\mathcal{S}$ to the result.

We will also write $\mathcal{L}$ for the functor
\[	\Alg_{\theta} \to \Alg_\lambda	\]
which sets $\psi^k = 1$ for $k$ prime to $p$ and then applies $\mathcal{S}$ to the result.
\end{defn}

\begin{ex}\label{Zp lambda-ring}
Recall that $\Z_p$ has a unique $\theta$-algebra structure, in which $\psi^p$ is the identity. Thus $\mathcal{L}(\Z_p)$ is a $\lambda$-ring in which all Adams operations are the identity. The $\lambda$-operations are given by $\lambda^n(x) = \binom{x}{n}$ \cite[Example 1.3]{Bousfield96}. 
\end{ex}

\begin{lem}
Both $\mathcal{S}$ and $\mathcal{L}$ are symmetric monoidal functors.
\end{lem}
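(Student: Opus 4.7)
My plan is to leverage the uniqueness clause in Bousfield's theorem (\Cref{thm: Bousfield lambda rings}): on a $p$-complete ring there is at most one $\lambda$-ring structure compatible with a prescribed set of Adams operations. Since $\mathcal{S}$ and $\mathcal{L}$ are both the identity on underlying rings, and both categories use the $p$-complete tensor product as their monoidal structure, verifying symmetric monoidality reduces to showing that two $\lambda$-structures on the common underlying ring $R \otimes S$ agree. By Bousfield's uniqueness, it further suffices to compare the associated Adams operations.

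For $\mathcal{S}$, I would first note that on the tensor product $R \otimes S$ in $\Alg_{\psi,\theta}$, the Adams operations are diagonal by construction: $\psi^k(x \otimes y) = \psi^k(x) \otimes \psi^k(y)$. I would then argue the same formula holds in $\mathcal{S}(R) \otimes \mathcal{S}(S)$: the tensor product of $\lambda$-rings has $\lambda^n(x \otimes 1) = \lambda^n(x) \otimes 1$ and $\lambda^n(1 \otimes y) = 1 \otimes \lambda^n(y)$, and since each $\psi^k$ is a polynomial in the $\lambda^i$ (via Newton's identities) that is moreover a ring homomorphism, it follows that $\psi^k$ acts as $\psi^k \otimes 1$ on $x \otimes 1$ and as $1 \otimes \psi^k$ on $1 \otimes y$, and then diagonally on pure tensors by multiplicativity. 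Thus the two Adams operation systems coincide and the $\lambda$-structures agree.

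For $\mathcal{L}$, I would factor $\mathcal{L} = \mathcal{S} \circ (-)^\flat$, where $(-)^\flat : \Alg_{\psi,\theta} \to \Alg_{\psi,\theta}$ sets $\psi^k$ to the identity for $k$ prime to $p$ while leaving $\psi^p$ (and hence $\theta$) unchanged. This endofunctor is trivially symmetric monoidal, since the diagonal tensor of two identity operations is again the identity. Hence $\mathcal{L}$ is a composition of symmetric monoidal functors and therefore symmetric monoidal. Unit preservation in both cases is immediate from the fact that $\mathcal{S}$ and $\mathcal{L}$ preserve underlying rings, together with the description of $\Z_p$ as a $\lambda$-ring in \Cref{Zp lambda-ring}.

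The only real subtlety is pinning down what the tensor product in $\Alg_\lambda$ means, since the paper's Cartan-formula shorthand is slightly informal. Before invoking uniqueness I would unwind the universal polynomials $P_n$, $P_{m,n}$ of \Cref{def: lambda-rings} (or simply cite \cite{Bousfield96}) to confirm that the tensor product of $\lambda$-rings is characterized by the two partial-evaluation formulas $\lambda^n(x \otimes 1) = \lambda^n(x) \otimes 1$ and $\lambda^n(1 \otimes y) = 1 \otimes \lambda^n(y)$ together with compatibility with the ring structure. Once that is confirmed, the rest of the argument is essentially bookkeeping.
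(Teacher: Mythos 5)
Your proposal is correct and is essentially the same argument as the paper's: both reduce the claim about $\mathcal{L}$ to the claim about $\mathcal{S}$ by observing that the prime-to-$p$ flattening is obviously monoidal, and both hinge on the same computation, namely that the Adams operations on a tensor product of $\lambda$-rings act diagonally. Where the paper phrases this as ``the forgetful functor from $\lambda$-rings to rings with Adams operations is monoidal, and $\mathcal{S}$ is its inverse,'' you instead invoke the uniqueness clause of Bousfield's theorem directly; these are two ways of packaging the same use of \Cref{thm: Bousfield lambda rings}. Your observation that the Cartan formula $\lambda^n = \sum \lambda^i \otimes \lambda^j$ as stated in the text is only informal shorthand, and that the actual characterization of the coproduct is via $\lambda^n(x \otimes 1) = \lambda^n(x) \otimes 1$ plus the $\lambda$-ring axioms, is a correct and worthwhile clarification.
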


\begin{proof}
As the operation of replacing the prime-to-$p$ Adams operations with the identity is clearly monoidal, it suffices to prove that $\mathcal{S}$ is monoidal. For this, it suffices to prove that the inverse operation, from $\lambda$-rings to rings with Adams operations $\psi^n$ for $n \in \Z_p$, preserves the obvious tensor products, which is a simple calculation.
\end{proof}

\begin{cor}\label{lem: coproduct lambda algebras}
		Let $M$ be a torsion-free, $p$-complete $\Z_p$-module. Then the coproduct map 
		\[
		\Delta:\mathcal{L}(\TT(M))\to \mathcal{L}(\TT(M))\otimes \mathcal{L}(\TT(M))
		\]
		is a morphism of $\lambda$-rings.
\end{cor}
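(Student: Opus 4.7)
The plan is to combine functoriality of $\mathcal{L}$ with the symmetric monoidality established in the previous lemma. First I would invoke the discussion preceding Example \ref{ex: Hopf algebra free theta algebra single generator}, which tells us that since $\TT$ is a left adjoint, the natural isomorphism $\TT(M\oplus M)\cong \TT(M)\otimes \TT(M)$ is an isomorphism of $\theta$-algebras, and consequently the comultiplication $\Delta:\TT(M)\to \TT(M)\otimes \TT(M)$ induced by the diagonal $M\to M\oplus M$ is a morphism of $\theta$-algebras.

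Next, I would promote $\TT(M)$ and $\TT(M)\otimes \TT(M)$ to $\psi$-$\theta$-algebras by declaring the prime-to-$p$ Adams operations $\psi^k$ to be the identity. Since the identity commutes with every ring endomorphism, the already-established $\theta$-algebra map $\Delta$ is automatically a morphism of $\psi$-$\theta$-algebras with respect to this trivial action. Applying the functor $\mathcal{L}:\Alg_{\psi,\theta}\to \Alg_\lambda$ then yields a morphism of $\lambda$-rings
\[
\mathcal{L}(\Delta):\mathcal{L}(\TT(M))\to \mathcal{L}(\TT(M)\otimes \TT(M)).
\]

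Finally, by the previous lemma $\mathcal{L}$ is symmetric monoidal, giving a natural isomorphism $\mathcal{L}(\TT(M)\otimes \TT(M))\cong \mathcal{L}(\TT(M))\otimes \mathcal{L}(\TT(M))$ of $\lambda$-rings. Composing produces the desired morphism of $\lambda$-rings, which on underlying rings is exactly $\Delta$. Since the argument is entirely formal — functoriality of $\mathcal{L}$ applied to a known $\theta$-algebra map, plus monoidality to identify the target — there is no real obstacle; the only point of care is the sleight of hand of inserting trivial prime-to-$p$ Adams operations so that $\Delta$ fits into the domain of $\mathcal{L}$, which is precisely what the extension of $\mathcal{L}$ from $\Alg_{\psi,\theta}$ to $\Alg_\theta$ in \Cref{lambda-ring functors} is designed to accommodate.
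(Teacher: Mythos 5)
Your proof is correct and follows the same reasoning the paper intends: the statement is labeled a corollary of the preceding lemma that $\mathcal{L}$ is symmetric monoidal, and your argument — note $\Delta$ is a $\theta$-algebra map via $\TT(M\oplus M)\cong\TT(M)\otimes\TT(M)$, extend trivially to a $\psi$-$\theta$-algebra map, apply $\mathcal{L}$, and use monoidality to identify the target — simply spells out that deduction explicitly.
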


\bibliographystyle{plain}
\bibliography{K(1)tmfcoops}
\end{document}